\newcommand{\defi}[1]{{\upshape\bf \sffamily #1}}
\renewcommand{\a}{\alpha}
\renewcommand{\b}{\beta}
\newcommand{\D}{\operatorname{D}}
\newcommand{\bw}{\bigwedge}
\renewcommand{\det}{\textrm{det}}
\newcommand{\onto}{\twoheadrightarrow}
\newcommand{\oo}{\otimes}
\newcommand{\chr}{\operatorname{char}}
\newcommand{\coker}{\operatorname{Coker}}
\newcommand{\Ext}{\operatorname{Ext}}
\newcommand{\Hom}{\operatorname{Hom}}
\newcommand{\SL}{\operatorname{SL}}
\newcommand{\Sym}{\operatorname{Sym}}
\newcommand{\Tor}{\operatorname{Tor}}
\newcommand{\rH}{\mathrm{H}}
\newcommand{\bb}[1]{\mathbb{#1}}
\newcommand{\op}[1]{\operatorname{#1}}
\renewcommand{\rm}[1]{\textrm{#1}}
\newcommand{\mc}[1]{\mathcal{#1}}
\newcommand{\mf}[1]{\mathfrak{#1}}
\newcommand{\ms}[1]{\mathscr{#1}}
\def\kk{{\mathbf k}}
\def\PP{{\textbf P}}
\def\lra{\longrightarrow}
\newtheorem{theorem}[equation]{Theorem}
\newtheorem*{theorem*}{Theorem}
\newtheorem{lemma}[equation]{Lemma}
\newtheorem{proposition}[equation]{Proposition}
\newtheorem{corollary}[equation]{Corollary}
\newtheorem*{corollary*}{Corollary}
\newtheorem*{main*}{Main Theorem}
\theoremstyle{definition}
\newtheorem{defn}[equation]{Definition}
\newtheorem{eg}[equation]{Example}
\theoremstyle{remark}
\newtheorem{rmk}[equation]{Remark}
\newenvironment{remark}[1][]{\begin{rmk}[#1] \pushQED{\qed}}{\popQED \end{rmk}}
\numberwithin{equation}{section}
\begin{document}

\title[Bi-graded Koszul modules, K3 carpets, and Green's conjecture]{Bi-graded Koszul modules, K3 carpets, \\and Green's conjecture}

\author{Claudiu Raicu}
\address{Department of Mathematics, University of Notre Dame, 255 Hurley, Notre Dame, IN 46556\newline
\indent Institute of Mathematics ``Simion Stoilow'' of the Romanian Academy}
\email{craicu@nd.edu}
\thanks{CR was supported by NSF DMS-1901886 and a Sloan research fellowship.}

\author{Steven V Sam}
\address{Department of Mathematics, University of California, San Diego, La Jolla, CA 92093}
\email{ssam@ucsd.edu}
\thanks{SS was supported by NSF DMS-1849173 and a Sloan research fellowship.}

\subjclass[2010]{Primary 13D02}

\date{August 2, 2021}

\keywords{Koszul modules, Green's conjecture, Canonical Ribbon Conjecture, K3 carpets, syzygies}

\begin{abstract}
We extend the theory of Koszul modules to the bi-graded case, and prove a vanishing theorem that allows us to show that the Canonical Ribbon Conjecture of Bayer and Eisenbud holds over a field of characteristic zero or at least equal to the Clifford index. Our results confirm a conjecture of Eisenbud and Schreyer regarding the characteristics where the generic statement of Green's conjecture holds. They also recover and extend to positive characteristics the results of Voisin asserting that Green's Conjecture holds for generic curves of each gonality.
\end{abstract}

\maketitle

\section{Introduction}\label{sec:intro}

One of the most influential open problems in the study of syzygies over the past $35$ years, which remains open to this date, is Green's Conjecture on canonical curves \cite[Conjecture~5.1]{green}. It asserts that for a smooth curve $C$ of genus $g$ in characteristic zero, the (non-)vanishing behavior of the Koszul cohomology groups $K_{p,1}(C,\omega_C)$, where $\omega_C$ is the canonical bundle, detects the Clifford index of~$C$:
\[ K_{i,1}(C,\omega_C)\neq 0 \Longleftrightarrow i\leq g-1-\op{Cliff}(C).\]
The implication ``$\Longleftarrow$" was proved by Green and Lazarsfeld in \cite[Appendix]{green}, and the converse amounts by duality to showing that
\begin{equation}\label{eq:green-vanishing}
 K_{i,2}(C,\omega_C)=0\mbox{ for }i<\op{Cliff}(C).
\end{equation}
It was soon realized that due to the semi-continuity property of syzygies, one can try to prove generic versions of Green's Conjecture by constructing examples of curves that exhibit the vanishing~(\ref{eq:green-vanishing}). Moreover, singular examples of such curves are good enough as long as they are smoothable. Despite some appealing candidates being proposed over the years (such as rational cuspidal curves, nodal curves, ribbons), the vanishing (\ref{eq:green-vanishing}) for generic curves (where $\op{Cliff}(C) = \lfloor(g-1)/2\rfloor$) remained open until the tour de force by Voisin \cites{voisin-even,voisin-odd} that used cohomology calculations on Hilbert schemes and the geometry of K3 surfaces. The work of Voisin shows~(\ref{eq:green-vanishing}) for a generic curve of any gonality $d$ (where $\op{Cliff}(C) = d-2$), extending earlier results that were established in large genus: $g>(d-1)\cdot(d-2)$ in \cite{schreyer-large-g}, or $g\geq 3d+2$ in \cite{teixidor}.  Building on \cites{HR,voisin-odd}, Aprodu describes, inside each $d$-gonal stratum, explicit loci where Green's conjecture holds \cite{aprodu}. 

More recently, a more elementary and algebraic approach using the theory of Koszul modules has been used in \cite{AFPRW} to prove (\ref{eq:green-vanishing}) for rational cuspidal curves, fulfilling one of the early proposals \cite[Section~3.I]{eisenbud-orientation} and recovering Green's conjecture for generic curves. The main goal of our paper is to extend the theory of Koszul modules to the bi-graded setting, and verify (\ref{eq:green-vanishing}) for rational ribbons, proving the Canonical Ribbon Conjecture \cite{bayer-eisenbud} and recovering Green's conjecture for generic curves in each gonality. We note that a proof of the Canonical Ribbon Conjecture that builds on the work of Voisin was obtained recently by Deopurkar \cite{deopurkar}. 

An important advantage of the approach through Koszul modules is that the methods carry over to positive characteristic. As stated, Green's Conjecture was known to fail in small characteristics even for generic curves, by work of Schreyer \cite{schreyer}, for instance in genus~$7$ and characteristic~$2$. It is then natural to try to identify the appropriate characteristic assumptions to insure that Green's Conjecture remains valid (we note that Bopp and Schreyer have proposed a modification of the conjecture that is characteristic free \cite{bopp-schreyer}, but we won't pursue it here). Eisenbud and Schreyer investigated further this problem in \cite{ES-K3carpets} and conjectured that (\ref{eq:green-vanishing}) should hold for generic curves in characteristic $\geq \lfloor (g-1)/2 \rfloor$. Our results confirm this conjecture, and improve on the lower bound $\geq (g+2)/2$ from \cite{AFPRW}. We also note that the restrictions on the characteristic have a clean explanation in our approach, coming from the fact that symmetric and divided powers are not isomorphic as functors in small characteristics.

\medskip

We now formulate our results more precisely. Throughout this article we work over an algebraically closed field $\kk$. We fix positive integers $a,b$, and let $\mc{S}(a,b)\subset\bb{P}^{a+b+1}$ denote the \defi{rational normal scroll} of type $(a,b)$. By \cite[Theorem~1.3]{gal-pur} there is a unique double structure on $\mc{S}(a,b)$ that is numerically a K3 surface; it is denoted $\mc{X}(a,b)$ and called a \defi{K3 carpet}. Our interest in the study of K3 carpets lies in the fact that their hyperplane sections are canonical ribbons of genus $g=a+b+1$ and Clifford index $\min(a,b)$, and as such they are degenerations of smooth canonical curves with the same invariants (see \cites{bayer-eisenbud,fong,eisenbud-green,ES-K3carpets} and Section~\ref{sec:green}). We will prove:

\begin{theorem}\label{thm:syz-Xab}
 Let $R=\kk[\bb{P}^{g}]$ and $A=\kk[\mc{X}(a,b)]$ denote the homogeneous coordinate rings of $\bb{P}^g$ and $\mc{X}(a,b)$ respectively. If $p=\chr(\kk)$ satisfies $p=0$ or $p\geq\min(a,b)$ then
 \[ \Tor_i^R(A,\kk)_{i+2} = 0 \mbox{ for all }i<\min(a,b).\]
\end{theorem}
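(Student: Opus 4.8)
The plan is to separate the carpet $\mc{X}(a,b)$ from its underlying scroll and translate the assertion into a vanishing statement for a bi-graded Koszul module. Write $R=\kk[x_0,\dots,x_a,y_0,\dots,y_b]$ and refine its grading to the $\bb{Z}^2$-grading in which $\deg x_i=(1,0)$ and $\deg y_j=(0,1)$, coming from the two rulings of $\mc{S}(a,b)$. Since $\mc{S}(a,b)$ is cut out by the $2\times 2$ minors of a $2\times(a+b)$ matrix whose columns split into an $x$-block and a $y$-block, the ideals $I_{\mc{S}(a,b)}$ and $I_{\mc{X}(a,b)}$ are bi-homogeneous; hence $A$, $A_S:=R/I_{\mc{S}(a,b)}$ and all the $\Tor$'s below are bi-graded, and because $\Tor_i^R(A,\kk)_{i+2}=\bigoplus_{d_1+d_2=i+2}\Tor_i^R(A,\kk)_{(d_1,d_2)}$ it suffices to kill each bi-graded summand for $i<\min(a,b)$.

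\textbf{Reduction to a connecting map.} The next step exploits the short exact sequence $0\to M\to A\to A_S\to 0$ of bi-graded $R$-modules coming from $I_{\mc{X}(a,b)}\subset I_{\mc{S}(a,b)}$, where $M=I_{\mc{S}(a,b)}/I_{\mc{X}(a,b)}$. The defining property of the K3 carpet — $\omega_{\mc{X}(a,b)}\cong\mc{O}_{\mc{X}(a,b)}$ together with arithmetic Cohen--Macaulayness of $\mc{X}(a,b)$ \cite{ES-K3carpets} — identifies $M$, as a bi-graded $A_S$-module, with the canonical module $\omega_{A_S}$ of the scroll (sheafily, $\widetilde M\cong\omega_{\mc{S}(a,b)}$). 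Since $\mc{S}(a,b)$ has minimal degree, $A_S$ is resolved over $R$ by the (bi-graded) Eagon--Northcott complex, which is $2$-linear; dualizing over $R$, the canonical module $\omega_{A_S}$ is resolved by a complex whose $i$-th term sits in internal degree $i+2$. Feeding these into the long exact sequence of $\Tor^R_\bullet(-,\kk)$ and extracting internal degree $i+2$, every $A_S$-contribution vanishes there except $\Tor^R_{i+1}(A_S,\kk)_{i+2}$, giving, for each $i\ge 1$, a natural identification
\[
\Tor_i^R(A,\kk)_{i+2}\;\cong\;\operatorname{coker}\!\big(\Tor^R_{i+1}(A_S,\kk)_{i+2}\xrightarrow{\ \partial_i\ }\Tor^R_i(\omega_{A_S},\kk)_{i+2}\big),
\]
while $i=0$ is trivial. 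As both strands are linear and $\partial_i$ preserves internal degree, $\partial_i$ is in each bi-degree a $\kk$-linear map of finite-dimensional spaces with constant matrix, so the theorem is equivalent to surjectivity of $\partial_i$ in every bi-degree $(d_1,d_2)$ with $d_1+d_2=i+2$, $i<\min(a,b)$.

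\textbf{Passage to bi-graded Koszul modules.} The scroll matrix is built from the two multiplication maps on the $\bb{P}^1$-base of the rulings, so the bi-graded pieces of the Eagon--Northcott complex of $A_S$ and of its $R$-dual are assembled from exterior powers of $\kk^a$ and $\kk^b$ (one per ruling) tensored with divided/symmetric powers of $\kk^2$ (from the two rows of the matrix), with $\partial_i$ given by the corresponding Koszul-type differentials. This is exactly the data packaged by the bi-graded Koszul module $W$ attached to the carpet in the general theory set up earlier in the paper, and under this dictionary $\operatorname{coker}(\partial_i)$ in bi-degree $(d_1,d_2)$ becomes, up to an index shift, the bi-graded piece $W_{(d_1-1,d_2-1)}$. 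Thus the required surjectivity for all $i<\min(a,b)$ amounts to $W_{(q_1,q_2)}=0$ for $q_1+q_2\le\min(a,b)-1$.

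\textbf{The vanishing theorem; main obstacle.} Finally one applies the bi-graded vanishing theorem, which yields exactly that these $W_{(q_1,q_2)}$ vanish in the stated range provided $p=\chr(\kk)$ is $0$ or $\ge\min(a,b)$. The substance of the argument, and the main obstacle, is twofold. First one must prove this bi-graded vanishing theorem: as in the ungraded case of \cite{AFPRW} this rests on a homological analysis of $W$, now in the bi-graded setting, and the sharp threshold appears because the natural comparison maps $\D^k\to\Sym^k$ are isomorphisms exactly when $p=0$ or $k<p$, while the vanishing range $q_1+q_2\le\min(a,b)-1$ only involves powers of order below that bound. Second, the tensor determining the carpet's Koszul module is \emph{not} generic, so one must verify — by a specialization/upper-semicontinuity argument from the generic bi-graded tensor, or by a direct resolution computation on $\bb{P}^1$ in the spirit of the rational-cuspidal case — that it still exhibits the generic vanishing behaviour. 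Granting these two inputs, the identifications above give $\Tor_i^R(A,\kk)_{i+2}=0$ for all $i<\min(a,b)$.
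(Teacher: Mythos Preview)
Your overall architecture --- split $A$ via $0\to\omega_B\to A\to B\to 0$, reduce $\Tor_i^R(A,\kk)_{i+2}$ to the cokernel of the connecting map $\Tor_{i+1}^R(B,\kk)_{i+2}\to\Tor_i^R(\omega_B,\kk)_{i+2}$, reinterpret this cokernel as a piece of a bi-graded Koszul module, and apply the vanishing theorem --- is exactly the paper's strategy. The reduction step is fine. The gap is in your ``Passage to bi-graded Koszul modules'' paragraph, where the dictionary you describe is wrong in a way that makes the final appeal to the vanishing theorem incoherent.

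There is not a single Koszul module $W$ attached to the carpet whose $(q_1,q_2)$-piece is $\Tor_i^R(A,\kk)_{(q_1+1,q_2+1)}$. Rather, for \emph{each} bi-degree $(u+1,v+1)$ with $u+v=i$ there is a \emph{different} Weyman module $W^{(u+1,v+1)}$, built over the polynomial ring $\Sym(\D^{u+1}U\oplus\D^{v+1}U)$ (not over $R$), and the identification is
\[
\Tor_i^R(A,\kk)_{u+1,v+1}\;\cong\;W^{(u+1,v+1)}_{\,a-1-u,\,b-1-v}.
\]
Thus the bi-degree on $R$ determines the \emph{superscript} of the Weyman module, while the subscript records $(a,b)$; the parameters $a,b$ have disappeared entirely from your formulation. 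This matters because the vanishing theorem (Theorem~\ref{thm:vanishing}/Corollary~\ref{cor:vanishing-Weyman}) asserts that $W^{(u+1,v+1)}_{d,e}=0$ for \emph{large} $(d,e)$, namely $d\ge v$ and $e\ge u$, whereas Theorem~\ref{thm:hilbert-function} computes the generically \emph{nonzero} pieces in low bi-degree. Your claimed conclusion ``$W_{(q_1,q_2)}=0$ for $q_1+q_2\le\min(a,b)-1$'' has the inequality pointing the wrong way and cannot be what the vanishing theorem delivers. The correct unwinding is: $i<\min(a,b)$ gives $a-1-u\ge i-u=v$ and $b-1-v\ge i-v=u$, placing $(a-1-u,b-1-v)$ in the vanishing range of $W^{(u+1,v+1)}$.

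A second omission is Hermite reciprocity. The bi-graded Eagon--Northcott pieces you identify are $\D^iU\oo\bw^{u+1}(\Sym^{a-1}U)\oo\bw^{v+1}(\Sym^{b-1}U)$, while the Koszul-module side lives in symmetric powers of $\D^{u+1}U$ and $\D^{v+1}U$. The passage between the two is the $\SL(U)$-isomorphism $\bw^{u+1}(\Sym^{a-1}U)\cong\Sym^{a-1-u}(\D^{u+1}U)$, and verifying that the connecting map $\partial_i$ becomes the Koszul differential under this isomorphism is the technical heart of the argument (this is what Propositions~\ref{prop:scroll-free}--\ref{prop:omega-koszul} and the $\Phi$-complexes establish). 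Your sentence ``this is exactly the data packaged by the bi-graded Koszul module'' hides precisely the step that requires work.
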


By passing to a hyperplane section (and assuming $a\leq b$ in the theorem above) we obtain:

\begin{theorem}[The Canonical Ribbon Conjecture]\label{thm:CRC}
 Let $C$ be a rational ribbon of genus $g$ and Clifford index $a$. If $p=\chr(\kk)$ satisfies $p=0$ or $p\geq a$ then
 \[K_{i,2}(C,\omega_C) = 0\mbox{ for all }i<a.\]
\end{theorem}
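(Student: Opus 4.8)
The plan is to deduce this from Theorem 1.4 by a standard hyperplane-section argument for syzygies. First I would set up the geometry: assuming without loss of generality that $a \leq b$ (so $\min(a,b) = a$), a canonical ribbon $C$ of genus $g$ and Clifford index $a$ arises, up to the relevant equivalence, as a hyperplane section of the K3 carpet $\mc{X}(a,b)$ inside $\bb{P}^g$ — this is precisely the content of the references \cite{bayer-eisenbud,fong,eisenbud-green,ES-K3carpets} cited just before the statement. So $C \subset \bb{P}^{g-1}$ is embedded by its canonical bundle $\omega_C$, and $\kk[C]$ is obtained from $A = \kk[\mc{X}(a,b)]$ by quotienting by a single linear nonzerodivisor (a general hyperplane section), with $R' = \kk[\bb{P}^{g-1}] = R/(\ell)$.

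Next I would invoke the behaviour of $\Tor$ under reduction by a linear nonzerodivisor. Since the linear form $\ell$ cutting out the hyperplane is a nonzerodivisor on $A$ (the carpet being arithmetically Cohen–Macaulay, as it is numerically K3 and projectively normal), there is an isomorphism
\[
\Tor_i^{R'}(\kk[C], \kk)_j \;\cong\; \Tor_i^{R}(A, \kk)_j
\]
for all $i,j$, coming from the change-of-rings spectral sequence (or simply from the fact that a minimal free resolution of $A$ over $R$ descends to one of $\kk[C]$ over $R'$ after tensoring with $R'$). Under the standard dictionary between Koszul cohomology and $\Tor$ of coordinate rings, $K_{i,2}(C,\omega_C) \cong \Tor_i^{R'}(\kk[C],\kk)_{i+2}$. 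Combining these identifications with Theorem 1.4, which gives $\Tor_i^R(A,\kk)_{i+2} = 0$ for all $i < \min(a,b) = a$ whenever $p = 0$ or $p \geq a$, yields $K_{i,2}(C,\omega_C) = 0$ for all $i < a$, as desired.

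The main obstacle — really the only nontrivial point, since Theorem 1.4 is doing all the heavy lifting — is justifying cleanly that every rational ribbon of genus $g$ and Clifford index $a$ is captured by this construction, i.e. that the hyperplane section of $\mc{X}(a,b)$ (for the appropriate $b = g - 1 - a$) is a \emph{general} canonical ribbon and that all ribbons of these invariants are projectively equivalent, or at least have isomorphic Koszul cohomology. This is where Section~\ref{sec:green} and the cited literature on ribbons must be brought to bear: one needs that $\mc{X}(a,b)$ is arithmetically Cohen–Macaulay (so that $\ell$ is a nonzerodivisor and the $\Tor$ comparison is exact), and that the canonical ribbon structure on $C$ matches $\kk[\mc{X}(a,b)]/(\ell)$ — including checking that the arithmetic genus and Clifford index are $g$ and $a$ respectively, which is recorded in the discussion preceding Theorem~\ref{thm:syz-Xab}. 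Once these structural facts are in place, the reduction is purely formal.
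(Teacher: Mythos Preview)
Your proposal is correct and follows essentially the same route as the paper: reduce to Theorem~\ref{thm:syz-Xab} via a hyperplane section, using that $A=\kk[\mc{X}(a,b)]$ is Cohen--Macaulay so that Betti numbers are preserved. The paper's treatment (in Section~\ref{sec:green}) resolves the ``obstacle'' you flag by noting that \emph{every} canonical ribbon of genus $g$ and Clifford index $a$ arises as a hyperplane section of $\mc{X}(a,g-1-a)$ (the sections being parametrized by regular sequences $(f_1,f_2)\in\Sym^a U\oplus\Sym^{g-1-a}U$, per \cite{bayer-eisenbud}), so Cohen--Macaulayness alone forces all of them to share the Betti numbers of the carpet---no projective-equivalence statement is needed.
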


\begin{corollary}
  Over a field $\kk$ of characteristic $p=0$ or $p \ge a$, Green's conjecture is true for a non-empty Zariski open subset  inside the locus of genus $g$ curves with Clifford index $a$.
\end{corollary}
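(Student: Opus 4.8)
The plan is to derive the corollary from Theorem~\ref{thm:CRC} by the semicontinuity argument sketched in the introduction, so that essentially all of the content sits in the preceding theorems. I would begin by reducing Green's Conjecture for a smooth curve $C$ of genus $g$ to the vanishing (\ref{eq:green-vanishing}): the implication ``$\Longleftarrow$'' is the unconditional Green--Lazarsfeld nonvanishing theorem \cite[Appendix]{green}, and by the duality recalled in the introduction the remaining implication is equivalent to $K_{i,2}(C,\omega_C)=0$ for $i<\op{Cliff}(C)$. Fix $a\le b$, set $g=a+b+1$ (so automatically $a\le(g-1)/2$), and let $C_0$ be a general hyperplane section of the K3 carpet $\mc{X}(a,b)$; this is a canonical ribbon of genus $g$ and Clifford index $a$, and Theorem~\ref{thm:CRC} gives $K_{i,2}(C_0,\omega_{C_0})=0$ for all $i<a$.

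Next I would invoke the smoothability of canonical ribbons (see \cites{bayer-eisenbud,fong,eisenbud-green,ES-K3carpets} and Section~\ref{sec:green}): the ribbon $C_0$ sits in a flat projective family $\pi\colon\mc{C}\to T$ over an irreducible base $T$, equipped with a line bundle $\mc{L}$ restricting to the canonical (dualizing) bundle on each fiber, such that some $0\in T$ has $(\mc{C}_0,\mc{L}_0)=(C_0,\omega_{C_0})$ and a dense open subset of $T$ parametrizes smooth curves of genus $g$ and Clifford index $a$; moreover this family can be taken to dominate the locus of $(a+2)$-gonal curves, which is irreducible by classical results and, in the range $a\le(g-1)/2$, is where the general curve of Clifford index $a$ lives. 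Since the fibers are canonically embedded, the Koszul cohomology $K_{i,2}(\mc{C}_t,\mc{L}_t)$ is computed fiberwise from a complex of locally free sheaves on $T$ (the Koszul complex of the bundles $\pi_*\mc{L}^{\otimes n}$), so for each $i$ the function $t\mapsto\dim_{\kk}K_{i,2}(\mc{C}_t,\mc{L}_t)$ is upper semicontinuous on $T$. As this dimension vanishes at $0$, it vanishes on a Zariski-open neighborhood of $0$; intersecting these neighborhoods over the finitely many $i<a$, and with the open locus of smooth Clifford-index-$a$ fibers, yields a nonempty Zariski-open $U\subseteq T$ over which every fiber is a smooth curve of genus $g$ and Clifford index $a$ satisfying (\ref{eq:green-vanishing}), hence Green's Conjecture.

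Finally, the classifying map $U\to\mc{M}_g$ has irreducible image contained in the locus $\{[C]:\op{Cliff}(C)=a\}$, and by the dominance above this image contains a nonempty Zariski-open subset of the relevant (gonality) component of that locus; on this open subset Green's Conjecture holds, which is the assertion of the corollary. The only point requiring care is precisely this dominance/Clifford-index statement --- that the general member of the ribbon family is smooth of genus $g$ with Clifford index exactly $a$ (equivalently gonality $a+2$) and that these members sweep out an open subset of the gonality stratum --- which is what prevents the conclusion from being vacuous; it is supplied by the cited work on ribbons and is entirely independent of the characteristic hypothesis, the latter entering only through Theorems~\ref{thm:syz-Xab} and~\ref{thm:CRC}.
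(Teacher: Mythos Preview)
Your outline matches the paper's approach: vanishing for the ribbon plus semicontinuity in a smoothing family, with the delicate point being the existence of a smoothing whose general fiber has Clifford index exactly $a$. One correction, though: your final claim that the smoothing/Clifford-index input is ``entirely independent of the characteristic hypothesis'' is not accurate, and this is precisely where the paper has to do extra work in Section~\ref{sec:green}. Fong's identification of ribbon structures (needed for smoothability) requires $\chr(\kk)\ne 2$, and more importantly the Eisenbud--Green result bounding the Clifford index of a ribbon by that of a nearby smooth curve is stated in characteristic zero. In positive characteristic the paper instead argues the other way around: it \emph{uses} the already-established vanishing $\beta_{i,i+2}(C_0)=0$ for $i<a$ together with semicontinuity and \cite[Corollary~9.7]{eisenbud-syzygies} to deduce that the generic smoothing has Clifford index $\ge a$ (Proposition~\ref{prop:smoothing}). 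So the characteristic hypothesis enters the Clifford-index step as well, not only through Theorem~\ref{thm:CRC}. The residual case $p=2$ (forcing $a\le 2$) is handled separately via Noether's and Petri's theorems.
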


Specializing to the case when the Clifford index is generic, $a= \lfloor (g-1)/2 \rfloor$, we confirm the following conjecture of Eisenbud and Schreyer \cite[Conjecture 0.1]{ES-K3carpets}.

\begin{theorem}\label{thm:generic-Green}
 Green's Conjecture is true for a general curve of genus $g$ over a field $\kk$ of characteristic $p=0$ or $p\geq \lfloor (g-1)/2 \rfloor$.
\end{theorem}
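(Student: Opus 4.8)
The plan is to read off Theorem~\ref{thm:generic-Green} from the Corollary above, taken in the case $a=\lfloor(g-1)/2\rfloor$. This is the largest value the Clifford index can assume: every smooth curve of genus $g$ satisfies $\op{Cliff}(C)\le\lfloor(g-1)/2\rfloor$, a classical fact valid in arbitrary characteristic (it follows, via $\op{Cliff}(C)\le\op{gon}(C)-2$, from the existence of a pencil of degree $\lceil(g+2)/2\rceil$). Moreover the characteristic hypothesis in Theorem~\ref{thm:generic-Green}, namely $\chr(\kk)=0$ or $\chr(\kk)\ge\lfloor(g-1)/2\rfloor$, is exactly the hypothesis $p=0$ or $p\ge a$ under which the Corollary applies for this choice of $a$, so there is nothing to arrange on that front. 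Granting the Corollary, the only remaining task is to show that the locus $\mc{M}_g^{\circ}$ of smooth curves of genus $g$ with Clifford index equal to $\lfloor(g-1)/2\rfloor$ is dense in $\mc{M}_g$.

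To see this, note first that for any fixed $a$ the condition $\op{Cliff}(C)\ge a$ is Zariski-open on $\mc{M}_g$, because its complement --- the locus of curves admitting a line bundle of Clifford index $<a$ --- is closed, being a finite union of images of Brill--Noether loci that are proper over $\mc{M}_g$. Hence $\mc{M}_g^{\circ}=\{\op{Cliff}(C)=\lfloor(g-1)/2\rfloor\}$ is open in $\mc{M}_g$, and it is non-empty: the rational ribbons of Clifford index $\lfloor(g-1)/2\rfloor$ to which Theorem~\ref{thm:CRC} applies are degenerations of smooth canonical curves of the same genus and Clifford index (this smoothability is part of the input, used in Section~\ref{sec:green}, that lets one pass from K3 carpets to smooth curves), so $\mc{M}_g^{\circ}$ contains the nearby smooth members; equivalently, one may invoke the characteristic-free Brill--Noether existence theorem, by which a general curve of genus $g$ carries no $g^r_d$ with negative Brill--Noether number and therefore has Clifford index exactly $\lfloor(g-1)/2\rfloor$. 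Since $\mc{M}_g$ is irreducible, a non-empty open subset is dense, so $\mc{M}_g^{\circ}$ is dense open in $\mc{M}_g$. Now apply the Corollary with $a=\lfloor(g-1)/2\rfloor$: it yields a non-empty open subset $U\subseteq\mc{M}_g^{\circ}$ on which Green's Conjecture holds in full --- the vanishing $K_{i,2}(C,\omega_C)=0$ for $i<a$ is, by the duality for canonical curves recalled in the introduction, the ``$\Longrightarrow$'' direction, and the ``$\Longleftarrow$'' direction is the unconditional non-vanishing $K_{i,1}(C,\omega_C)\ne 0$ for $i\le g-1-\op{Cliff}(C)$ of Green and Lazarsfeld, the two ranges matching because $\op{Cliff}(C)=a$ on $U$. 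As a non-empty open subset of a dense open subset of the irreducible $\mc{M}_g$, the set $U$ is dense open in $\mc{M}_g$, which is the assertion of Theorem~\ref{thm:generic-Green}.

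The real content of the paper lies upstream of this deduction: the bi-graded Koszul-module vanishing behind Theorem~\ref{thm:syz-Xab}, and the degeneration plus upper-semicontinuity of syzygies that promotes Theorem~\ref{thm:CRC} to the Corollary. At this last step the only point that needs a moment's care is the density of $\mc{M}_g^{\circ}$ in positive characteristic, which rests on Brill--Noether existence (or, alternatively, on the smoothability of the carpet) being available over fields of arbitrary characteristic; everything else is bookkeeping, and I do not foresee a genuine obstacle.
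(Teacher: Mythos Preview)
Your argument is correct and follows the same route as the paper: specialize the Corollary to $a=\lfloor(g-1)/2\rfloor$, use that a general curve of genus $g$ has Clifford index exactly $\lfloor(g-1)/2\rfloor$, and conclude. The paper dispatches the latter fact by citing \cite[Theorem~8.16]{eisenbud-syzygies}, whereas you spell out the openness (via properness of Brill--Noether loci) and the non-emptiness (via smoothing of ribbons, as in Section~\ref{sec:green}); the logic is the same.
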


The main new idea in our paper is the introduction and use of bi-graded Koszul modules, which we explain next. In the singly-graded case, Koszul modules were introduced by Papadima and Suciu in \cite{PS}, and they have been used in \cite{AFPRW} to prove the Generic Green's Conjecture via degeneration to cuspidal curves. It is this latter paper that constitutes the inspiration for our work. The motivation of Papadima and Suciu for defining Koszul modules comes from geometric group theory, where various incarnations of these modules have been used to great effect by Sullivan, Dimca, Papadima, Suciu, Hain and many others. In this setting, a more familiar name for Koszul modules is that of \defi{infinitesimal Alexander invariants} \cite[Section~1.8]{PS-chen}. For new applications in this context and a more extensive survey of the relevant literature, the reader can consult \cite{AFPRW2}. Although we do not pursue this line of thought here, it is reasonable to expect, and worthwhile to pursue, analogous applications in geometric group theory for the algebraic results on bi-graded Koszul modules that we develop here.

To recall the definition of the singly-graded Koszul modules, we consider a vector space $V$ and form the polynomial ring $S=\Sym(V)$, endowed with the natural grading where the elements of~$V$ have degree one. For a subspace $K\subset \bw^2 V$, we 
form the $3$-term complex
\begin{equation}\label{eq:kosz-sequence}
 K \oo S \lra V\oo S \lra S,
\end{equation}
obtained by replacing $\bw^2 V\oo S$ in the Koszul complex with the submodule $K\oo S$. The \defi{Koszul module $W(V,K)$} is the middle homology of \eqref{eq:kosz-sequence}. It was shown in \cite{PS} that $W(V,K)$ is a finite length module if and only if the orthogonal complement $K^\perp \subset (\bw^2 V)^\vee$ does not contain any non-zero decomposable tensors $a\wedge b$, with $a,b\in V^{\vee}$ (equivalently, the projectivization of $K^{\perp}$ does not intersect the Grassmannian $\rm{Gr}(2,V^{\vee})$). Thinking of $(\bw^2 V)^\vee$ as the subspace of skew-symmetric tensors in $V^{\vee}\oo V^{\vee}$, the decomposable elements $a\wedge b$ precisely correspond to rank two tensors.

For the  bi-graded setting, we assume that $V$ comes with a decomposition $V=V_1\oplus V_2$, and endow the polynomial ring $S=\Sym(V)$ with the bi-grading where $S_{1,0}=V_1$ and $S_{0,1}=V_2$. We regard $V_1\oo V_2$ as the subspace of bi-degree $(1,1)$ elements in $\bw^2 V$, and hence we can regard any subspace $K\subset V_1\oo V_2$ as a subset of $\bw^2 V$ and form the complex \eqref{eq:kosz-sequence}. The resulting homology group $W(V,K)$ is then naturally bi-graded, and we call it a \defi{bi-graded Koszul module}.

We are interested in the case when $\dim(V_i)\geq 2$, where the aforementioned results of \cite{PS} imply that $W(V,K)$ is never of finite length: indeed, if $K \subset V_1 \otimes V_2$ then $\bw^2 V_i^\vee \subset K^\perp$, hence $K^{\perp}$ contains decomposable elements. To generalize the results of \cite{PS}, it is then more convenient to reinterpret the condition that a singly-graded Koszul module $W(V,K)$ has finite length as saying that the associated coherent sheaf on projective space $\bb{P}V$ is $0$. For a bi-graded Koszul module, it is then appropriate to instead consider when the corresponding coherent sheaf on the product of projective spaces $\bb{P}V_1 \times \bb{P}V_2$ is $0$. Algebraically, this means that
\begin{equation}\label{eq:asymp-vanishing}
W_{d,e}(V,K)=0\text{ for }d,e \gg 0.
\end{equation}
Pleasantly, this condition is equivalent to asking that the orthogonal complement $K^\perp \subset (V_1 \otimes V_2)^\vee$ contains no nonzero tensors of rank $\leq 2$ (see Proposition~\ref{prop:sheafW=0}). By picking bases, elements of $(V_1 \otimes V_2)^\vee$ can be interpreted as matrices, and the rank of an element coincides with the usual rank of a matrix. Geometrically, the projectivization of the rank $\le 2$ locus is the secant variety of the Segre embedding of $\bb{P}V_1 \times \bb{P}V_2$, i.e., the Zariski closure of the union of all secant lines through any 2 points of the Segre embedding. Remarkably, in analogy with \cite[Theorem~1.3]{AFPRW} we can make the vanishing \eqref{eq:asymp-vanishing} effective:

\begin{theorem}\label{thm:vanishing}
 Let $p=\chr(\kk)$, $n_i=\dim(V_i)\geq 2$, and suppose that $p=0$ or $p\geq n_1+n_2-3$. We have that
 \[
   W_{d,e}(V,K)=0\mbox{ for }d,e\gg 0 \Longleftrightarrow W_{n_2-2,n_1-2}(V,K)=0.
 \]
\end{theorem}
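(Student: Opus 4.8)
The plan is to reduce the "for $d,e\gg 0$" vanishing to a single bi-degree by an inductive mechanism that shifts vanishing from higher bi-degrees down to lower ones. The implication "$\Leftarrow$" is the substantive one (the other direction is immediate: if $W_{n_2-2,n_1-2}=0$ then in particular the module is zero in some bi-degree, and one still needs to propagate this, so really both directions require the propagation). The heart of the matter is a \emph{Lefschetz-type} statement: multiplication by a generic linear form $\ell\in V_1$ (resp.\ $V_2$) is surjective on $W(V,K)$ in a suitable range of bi-degrees. Concretely, I would aim to show that for a generic $\ell \in V_1$, the map $\ell\cdot : W_{d,e}(V,K)\to W_{d+1,e}(V,K)$ is surjective once $d\geq n_2-2$ and $e\geq 0$ (and symmetrically in the other variable). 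Granting this, together with the known eventual vanishing $W_{d,e}=0$ for $d,e\gg 0$ (Proposition~\ref{prop:sheafW=0}, valid because $K^\perp$ contains no rank-$\leq 2$ tensors), one propagates the vanishing backwards: surjectivity of the multiplication maps forces $W_{d,e}=0$ for all $(d,e)$ in the "staircase" region $d\geq n_2-2$, $e\geq n_1-2$, and the corner of that region is exactly $W_{n_2-2,n_1-2}$.

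To prove the Lefschetz statement, I would first set up the right homological framework. Quotienting the complex $K\otimes S \to V\otimes S \to S$ by a generic linear form $\ell\in V_1$ has two effects: it replaces $S$ by $\bar S = S/(\ell)=\Sym(\bar V_1\oplus V_2)$ with $\dim\bar V_1 = n_1-1$, and it introduces a Koszul-type long exact sequence relating $W(V,K)$, $W(V,K)$ shifted by $(1,0)$, and the homology of the reduced complex $\bar K\otimes\bar S\to \bar V\otimes\bar S\to\bar S$, where $\bar K$ is the image of $K$. The key point is that the reduced complex is again of the same type — a bi-graded Koszul module $W(\bar V,\bar K)$ — \emph{provided} $\bar K^\perp$ still contains no rank-$\leq 2$ tensors; this is a genericity condition on $\ell$ that one checks by a dimension count on the variety of rank-$\leq 2$ tensors inside $(\bar V_1\otimes V_2)^\vee$, and it is here that no new characteristic hypothesis is needed. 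Then I would run a double induction on $n_1+n_2$: the base cases are $n_1=2$ or $n_2=2$, where the singly-graded theory of \cite{AFPRW} (and the explicit structure of the Weyman/Koszul modules there) applies directly, and the inductive step feeds $W(\bar V,\bar K)$ — which lives one step lower in $n_1+n_2$ — into the long exact sequence to deduce the surjectivity of $\ell\cdot$ on $W(V,K)$ in the claimed range.

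The role of the characteristic hypothesis $p=0$ or $p\geq n_1+n_2-3$ deserves a separate remark: it should enter precisely when one needs the complex $K\otimes S\to V\otimes S\to S$ to behave like a truncated Koszul complex, i.e.\ when one invokes exactness of the ambient Koszul complex $\bw^\bullet V\otimes S$ in low homological degree and compatibility with the divided/symmetric power identifications underlying the passage to Weyman modules. The bound $n_1+n_2-3$ is what guarantees that the relevant symmetric powers of the $V_i$ (of degree up to roughly $n_1+n_2-3$) are still "reduced", so that the Hermite-reciprocity-type identifications and the splitting $K\simeq \D^{u+v}U\oplus \D^{u+v+2}U$ hold; in the abstract formulation of this theorem the same bound should be exactly what is needed for a certain natural map to be an isomorphism in the degrees we traverse during the backward propagation. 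I expect the main obstacle to be the bookkeeping in the long exact sequence: one must track the bi-degrees carefully so that every term appearing to the left or right of $W_{d,e}(V,K)$ and $W_{d+1,e}(V,K)$ — namely shifted copies of $W(\bar V,\bar K)$ and of $K^{(\prime)}\otimes$(something) — already vanishes by the inductive hypothesis in the staircase range, and in particular that the inductive range $d\geq n_2-2$ is preserved rather than eroded as $n_1$ decreases. Getting the inductive range to close up exactly at the corner $(n_2-2,n_1-2)$, rather than at a strictly larger bi-degree, is the delicate quantitative point.
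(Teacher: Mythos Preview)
Your approach is quite different from the paper's, and one point is genuinely off. First, a small correction: you have the directions reversed. Since $W(V,K)$ is generated in bi-degree $(0,0)$, vanishing at $(n_2-2,n_1-2)$ automatically propagates to all larger bi-degrees, so ``$\Leftarrow$'' is the trivial direction; the substantive one is ``$\Rightarrow$''. For that direction the paper does not run a Lefschetz-type induction on $n_1+n_2$ via hyperplane sections at all. Instead it works geometrically on $\PP=\bb{P}V_1\times\bb{P}V_2$: after reducing to the borderline case $\dim K=2n-4$ (by replacing $K$ with a generic subspace $K'\subseteq K$ and using the surjection $W(V,K')\onto W(V,K)$), the hypothesis $\mc{W}(V,K)=0$ makes the sheaf map $\alpha\colon K\otimes\mc{O}_\PP\to\mc{G}$ surjective, and one resolves $\mc{G}$ by the associated Buchsbaum--Rim complex $\mc{B}_\bullet$. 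The desired vanishing $W_{n_2-2,n_1-2}=0$ is exactly surjectivity of $\rH^0(\mc{B}_1(n_2-2,n_1-2))\to\rH^0(\mc{B}_0(n_2-2,n_1-2))$, which follows from the hypercohomology spectral sequence once one checks that each $\mc{B}_r(n_2-2,n_1-2)$ for $2\le r\le n-2$ has no cohomology; this is a direct K\"unneth computation using $\det(\mc{G}^\vee)=\mc{O}_\PP(-n_2+1,-n_1+1)$.

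Your account of where the characteristic hypothesis enters is the real gap. The theorem concerns an \emph{arbitrary} subspace $K\subseteq V_1\otimes V_2$; Hermite reciprocity and the splitting $K\simeq\D^{u+v}U\oplus\D^{u+v+2}U$ are features of the special Weyman-module case and play no role in the proof of this theorem. In the paper's argument the bound $p\ge n-3$ enters for a completely different and very concrete reason: the Buchsbaum--Rim terms are $\mc{B}_r=\bw^{n+r-2}K\otimes\det(\mc{G}^\vee)\otimes\D^{r-2}(\mc{G}^\vee)$ for $2\le r\le n-2$, while the acyclicity lemma is proved for $\Sym^{r-2}(\mc{G}^\vee)$, so one needs $p=0$ or $p>r-2$ throughout that range to identify divided and symmetric powers. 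In your proposed induction there is no visible place for a characteristic restriction --- the long exact sequence coming from quotienting by a generic $\ell$ is characteristic-free, and the inductive hypothesis would only require the \emph{weaker} bound $p\ge n-4$ --- so either your scheme proves too much or (more likely) the bound must enter at a step you have not yet identified. That unidentified step, together with the ``bookkeeping'' you flag yourself, is exactly what would have to be made precise before the argument can be evaluated.
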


The condition that $K^{\perp}\subset (V_1 \otimes V_2)^\vee$ contains no nonzero tensors of rank $\leq 2$ can only hold when $\dim(K)\geq 2(n_1+n_2-2)$, which is one more than the dimension of the secant variety of the Segre product $\bb{P}V_1\times\bb{P}V_2$. In analogy with \cite[Theorem~1.4]{AFPRW}, in the borderline case when $\dim(K)=2(n_1+n_2-2)$, we can determine an exact formula for the Hilbert function in low bi-degrees for a module $W(V,K)$ satisfying the conditions in Theorem~\ref{thm:vanishing} (see Theorem~\ref{thm:hilbert-function}). 

\medskip

We now give a high level overview of the strategy of proof of Theorem~\ref{thm:syz-Xab}. If we let $B$ denote the homogeneous coordinate ring of the scroll $\mc{S}(a,b)$, then we have a short exact sequence
\[
  0 \to \omega_B \to A \to B \to 0
\]
where $\omega_B$ is the canonical module of $B$. The minimal free resolution of $B$ is an Eagon--Northcott complex, while the minimal free resolution of $\omega_B$ is obtained by duality. In particular, we have $\Tor_i(B,\kk)_{i+2}=0$, so to prove the desired vanishing of the Tor groups of $A$, we need to show, for $i<\min(a,b)$, the surjectivity of the connecting homomorphisms
\[
  \Tor_{i+1}(B,\kk)_{i+2} \to \Tor_{i}(\omega_B, \kk)_{i+2}.
\]

To write everything invariantly, we pick a 2-dimensional vector space $U$ and write $\bb{P}(U)$ for the corresponding projective line. To keep everything correct in general, we will be careful to distinguish between divided powers $\D$ and symmetric powers $\Sym$; if $\kk$ has characteristic $0$, then these are isomorphic to one another, so the reader may replace all instances of divided powers $\D$ with symmetric powers $\Sym$ if that is their main scenario of interest. The map above takes the form
\begin{equation}\label{eq:invariant-Tor-map}
  \D^i U \otimes \bigwedge^{i+2}(\Sym^{a-1} U \oplus \Sym^{b-1} U) \to \Sym^{a+b-2-i} U \otimes \bigwedge^i(\Sym^{a-1} U \oplus \Sym^{b-1} U)
\end{equation}
where $\D$ is the divided power, $\Sym$ is the symmetric power, and $\bigwedge$ is the exterior power. While it is possible to give explicit formulas for this map, proving surjectivity from such a formula is a difficult task (especially since it depends on the characteristic of the field). Instead, we take a roundabout method that begins with Hermite reciprocity, which is an $\SL(U)$-equivariant isomorphism
\[
  \Sym^d(\D^i U) = \bw^i(\Sym^{d+i-1} U),  
\]
described in \cite[Section~3.4]{AFPRW}. If we decompose both sides of (\ref{eq:invariant-Tor-map}) using the identity
\begin{equation}\label{eq:wedge-sum}
  \bigwedge^d(E \oplus F) = \bigoplus_{u+v=d} \bigwedge^u E \otimes \bigwedge^v F,
\end{equation}
then via Hermite reciprocity, the connecting homomorphism takes the form
  \[
\xymatrix{  
  \displaystyle  \bigoplus_{\substack{u+v=i\\ u,v \ge -1}} \D^i U \otimes \Sym^{a-u-1}(\D^{u+1} U) \otimes \Sym^{b-v-1} (\D^{v+1} U) \ar[d] \\
\displaystyle  \bigoplus_{\substack{u+v=i\\ u,v \ge 0}} \Sym^{a+b-2-i} U \otimes \Sym^{a-u} (\D^u U) \otimes \Sym^{b-v} (\D^v U).
}
\]
If we focus on a particular bi-degree $(u,v)$ and sum over all $a \ge u$ and $b \ge v$, then the domain becomes $\D^i U \otimes \Sym(\D^{u+1} U \oplus \D^{v+1} U)$, a free module over the bi-graded polynomial ring 
\[
  S=\Sym(\D^{u+1} U \oplus \D^{v+1} U),\mbox{ where }S_{1,0}=\D^{u+1} U\mbox{ and }S_{0,1}=\D^{v+1} U.
\]
Miraculously, the target can be given the structure of a finitely generated $S$-module, so that this map is a module homomorphism, namely it is the middle homology of a complex
\[ \D^{u+v+2} U \oo S \lra (\D^{u+1} U \oplus \D^{v+1} U) \oo S \lra S.\]
This identification is subtle and occupies a great deal of the paper! Since $i=u+v$, this leads to a three-term complex of free $S$-modules
\[
  K \oo S \lra (\D^{u+1} U \oplus \D^{v+1} U) \oo S \lra S,
\]
where $K$ is some extension of $\D^{u+v}U$ by $\D^{u+v+2}U$ (which is split if the characteristic of $\kk$ is zero or large). We denote the middle homology by $W^{(u+1,v+1)}$ and call it a \defi{bi-graded Weyman module} (see Section~\ref{sec:Weyman}, and \cite[Section~5.1]{AFPRW} for the singly-graded case). In fact, this is an instance of a bi-graded Koszul module with $V_1=\D^{u+1} U$ and $V_2=\D^{v+1} U$. Specializing Theorem~\ref{thm:vanishing} to this situation gives the following theorem, which itself implies Theorem~\ref{thm:syz-Xab}:

\begin{theorem}\label{thm:vanishing-Weyman}
  If $p=\chr(\kk)$ satisfies $p=0$ or $p>u+v$ then 
  \addtocounter{equation}{-1}
  \begin{subequations}
    \begin{equation*}\label{eq:vanishing-Weyman}
  W^{(u,v)}_{d,e} = 0\mbox{ for }d\geq v,\ e\geq u.
 \end{equation*}
\end{subequations}
\end{theorem}

Finally, we note that in this situation, we have $\dim(K) = 2(\dim(V_1)+\dim(V_2)-2)$; from the previous discussion, we have a formula for the Hilbert function of $W^{(u,v)}$. Based on this, the reader can deduce formulas for certain bi-graded components of the $\Tor$-modules of $A$.

\subsection*{Organization.} In Section~\ref{sec:prelim} we recall basic constructions in multilinear algebra, and discuss Hermite reciprocity. Section~\ref{sec:koszul} is concerned with the basic theory of bi-graded Koszul modules, and contains the proof of the vanishing Theorem~\ref{thm:vanishing}. In Section~\ref{sec:Weyman} we discuss Weyman modules, showing that they satisfy the hypothesis of the vanishing theorem and deriving Theorem~\ref{thm:vanishing-Weyman}. The relationship between the syzygies of K3 carpets and Weyman modules is presented in Section~\ref{sec:K3carpets}, while the geometric applications are summarized in Section~\ref{sec:green}.

\section{Preliminaries}\label{sec:prelim}

In this section we collect some basic facts and notation regarding multilinear algebra, and recall some useful aspects of Hermite reciprocity following \cite{AFPRW}.

\subsection{Multilinear algebra}

Let $E$ be a vector space. The tensor power $E^{\otimes d}$ has an action of the symmetric group $\mf{S}_d$ via permuting tensor factors. The divided power $\D^d E$ is the invariant subspace and the symmetric power $\Sym^d E$ is the space of coinvariants. In formulas:
\begin{align*}
  \D^d E &= \{x \in E^{\otimes d} \mid \sigma(x) = x \ \text{for all $\sigma \in \mf{S}_d$}\}\\
  \Sym^d E &= E^{\otimes d} / \{x - \sigma(x) \mid \sigma \in \mf{S}_d, \  x \in U^{\otimes d}\}.
\end{align*}
There is a natural isomorphism
\[
  (\D^d E)^\vee = \Sym^d(E^\vee).
\]
If $d!$ is nonzero in $\kk$, and in particular if $\chr(\kk)=0$, then the composition $\D^d E \to E^{\otimes d} \to \Sym^d E$ is an isomorphism with inverse $\Sym^d E \cong \D^d E$ given by $x \mapsto \frac{1}{d!} \sum_{\sigma \in \mf{S}_d} \sigma(x)$.

The exterior powers $\bw^d E$ are the skew-invariants of $E^{\otimes d}$, i.e.,
\[
  \bw^d E = \{x \in E^{\otimes d} \mid \sigma(x) = \mathrm{sgn}(\sigma) x \ \text{for all $\sigma \in \mf{S}_d$}\}.
\]
For $e_1,\dots,e_d \in E$, we use the notation
\[
  e_1 \wedge \cdots \wedge e_d = \sum_{\sigma \in \mf{S}_d} \mathrm{sgn}(\sigma) e_{\sigma(1)} \otimes \cdots \otimes e_{\sigma(d)} \in \bw^d E
\]
and $e_1\cdots e_d$ to denote the image of $e_1 \otimes \cdots \otimes e_d$ in $\Sym^d E$.

For $\D^d E$, and $d_1+\cdots+d_r=d$, we use $e_1^{(d_1)} \cdots e_r^{(d_r)}$ to denote the sum over the orbit of $e_1^{\otimes d_1} \otimes \cdots \otimes e_r^{\otimes d_r}$ in $E^{\otimes d}$. For $u,v \ge 0$, we define comultiplication maps
\begin{align*}
  \Delta_{u,v} \colon \D^{u+v} E \to \D^u E \otimes \D^v E
\end{align*}
which are the linear duals of the multiplication maps
\[
  \Sym^u(E^\vee) \otimes \Sym^v(E^\vee) \to \Sym^{u+v}(E^\vee).
\]
Since multiplication is associative, comultiplication is coassociative, i.e., we have $(1 \otimes \Delta_{v,w}) \circ \Delta_{u,v+w} = (\Delta_{u,v} \otimes 1) \circ \Delta_{u+v,w}$ as maps $\D^{u+v+w} E \to \D^u E \otimes \D^v E \otimes \D^w E$.

Similarly, we also define comultiplication maps
\[
  \Delta_{u,v} \colon \bw^{u+v} E \to \bw^u E \otimes \bw^v E
\]
as the linear duals of the multiplication maps
\[
  \bw^u(E^\vee) \otimes \bw^v(E^\vee) \to \bw^{u+v}(E^\vee).
\]
Again, this comultiplication is coassociative. 

\subsection{Hermite reciprocity}

We let $U$ be a 2-dimensional $\kk$-vector space, and use $\SL(U)$ to denote the group of linear operators on $U$ with determinant 1. We fix a basis $\{1,x\}$ for $U$ which gives an identification $\bw^2 U \simeq \kk$ via $1 \wedge x \mapsto 1$, and we use this to identify $U\simeq U^{\vee}$. Hermite reciprocity is an $\SL(U)$-equivariant isomorphism
\[
  \Sym^d(\D^i U) = \bw^i(\Sym^{d+i-1} U).
\]
We won't make use of the explicit form of this isomorphism, but the reader can see \cite[\S 3.4]{AFPRW} for details. Under Hermite reciprocity, the multiplication map
\[
  \D^d U \otimes \Sym^{e-d+1}(\D^d U) \to \Sym^{e-d+2}(\D^d U)
\]
takes the form
\[
  \nu \colon \D^d U \otimes \bw^d(\Sym^e U) \to \bw^d(\Sym^{e+1}U).
\]
See \cite[Eqn. (43) and Proof of Lemma 3.3]{AFPRW} for a formula for $\nu$.

\begin{proposition} \label{prop:comult-nu}
  The following square commutes:
  \[
    \xymatrix{
      \D^d U \oo \bw^d(\Sym^e U) \ar[r]^\nu \ar[d] & \bw^d(\Sym^{e+1} U) \ar[d] \\
      \D^{d-1} U \oo \bw^{d-1}(\Sym^e U) \oo \Sym^{e+1} U \ar[r]^-{\nu \otimes 1} &  \bw^{d-1}(\Sym^{e+1} U) \oo \Sym^{e+1} U
    }
  \]
  where the left map is comultiplication on both factors followed by multiplication, and the right map is exterior comultiplication.
\end{proposition}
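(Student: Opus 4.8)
The plan is to recognize every arrow in the square as a restriction of a single ``coordinatewise'' multiplication map living on plain tensor powers, so that commutativity becomes a matter of tracking how the relevant invariant and skew-invariant subspaces sit inside those tensor powers. First I recall from \cite[Eqn.~(43), Proof of Lemma~3.3]{AFPRW} that $\nu$ admits the following description: writing $\theta\in\D^d U$ as a symmetric tensor $\theta=\sum y_1\otimes\cdots\otimes y_d\in U^{\otimes d}$, one has
\[
  \nu\bigl(\theta\otimes(f_1\wedge\cdots\wedge f_d)\bigr)=\sum (y_1f_1)\wedge\cdots\wedge(y_df_d),
\]
where $y_if_i\in\Sym^{e+1}U$ denotes multiplication by $y_i\in U$. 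Equivalently, $\nu$ is the restriction to $\D^d U\otimes\bw^d(\Sym^e U)$ of the map
\[
  \tilde\nu\colon U^{\otimes d}\otimes(\Sym^e U)^{\otimes d}\to(\Sym^{e+1}U)^{\otimes d},\qquad (u_1\otimes\cdots\otimes u_d)\otimes(f_1\otimes\cdots\otimes f_d)\longmapsto (u_1f_1)\otimes\cdots\otimes(u_df_d).
\]
Because $\tilde\nu$ is equivariant for the diagonal $\mf{S}_d$-action, it carries $\D^dU\otimes\bw^d(\Sym^eU)$ (a symmetric tensor times a skew-invariant) into the skew-invariant subspace $\bw^d(\Sym^{e+1}U)$; this is simultaneously why $\nu$ is well defined and why the formula is independent of the chosen representatives $f_i$.

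The second ingredient is to identify the two comultiplications with natural inclusions. Pairing against the (surjective) multiplications $\Sym^{d-1}(U^\vee)\otimes U^\vee\to\Sym^d(U^\vee)$ and $\bw^{d-1}((\Sym^mU)^\vee)\otimes(\Sym^mU)^\vee\to\bw^d((\Sym^mU)^\vee)$ on decomposable elements, I would check that $\Delta_{d-1,1}\colon\D^dU\to\D^{d-1}U\otimes U$ is simply the inclusion regarding a fully symmetric tensor in $U^{\otimes d}$ as symmetric in its first $d-1$ slots, and that $\Delta_{d-1,1}\colon\bw^d(\Sym^mU)\to\bw^{d-1}(\Sym^mU)\otimes\Sym^mU$ is the inclusion $\bw^d\subseteq\bw^{d-1}\otimes(\cdot)$ inside $(\Sym^mU)^{\otimes d}$, which in coordinates reads $f_1\wedge\cdots\wedge f_d\mapsto\sum_{k=1}^d(-1)^{d-k}(f_1\wedge\cdots\widehat{f_k}\cdots\wedge f_d)\otimes f_k$.

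Granting these two facts, both composites in the square turn out to equal the restriction of $\tilde\nu$ to $\D^dU\otimes\bw^d(\Sym^eU)$, regarded as landing in the subspace $\bw^{d-1}(\Sym^{e+1}U)\otimes\Sym^{e+1}U$ of $(\Sym^{e+1}U)^{\otimes d}$. Going clockwise, one applies $\nu=\tilde\nu|$ and then the inclusion $\bw^d(\Sym^{e+1}U)\hookrightarrow\bw^{d-1}(\Sym^{e+1}U)\otimes\Sym^{e+1}U\subseteq(\Sym^{e+1}U)^{\otimes d}$, sending $\theta\otimes(f_1\wedge\cdots\wedge f_d)$ to $\sum(y_1f_1)\otimes\cdots\otimes(y_df_d)$. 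Going counterclockwise, the left vertical map re-brackets $\Delta_{d-1,1}(\theta)\otimes\Delta_{d-1,1}(f_1\wedge\cdots\wedge f_d)$, i.e.\ it views $\theta\otimes(f_1\wedge\cdots\wedge f_d)$ inside $\bigl(U^{\otimes(d-1)}\otimes(\Sym^eU)^{\otimes(d-1)}\bigr)\otimes\bigl(U\otimes\Sym^eU\bigr)$, multiplies the last block by $m\colon U\otimes\Sym^eU\to\Sym^{e+1}U$, and then applies $\nu\otimes1$, which on the first block is exactly $\tilde\nu$ for $d-1$ factors; the output is $\bigl((y_1f_1)\otimes\cdots\otimes(y_{d-1}f_{d-1})\bigr)\otimes(y_df_d)$. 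Since $\tilde\nu$ is coordinatewise, $\tilde\nu=\tilde\nu_{d-1}\otimes m$ under $U^{\otimes d}=U^{\otimes(d-1)}\otimes U$ and $(\Sym^eU)^{\otimes d}=(\Sym^eU)^{\otimes(d-1)}\otimes\Sym^eU$, so this is the same element $\sum(y_1f_1)\otimes\cdots\otimes(y_df_d)$, and the square commutes.

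I expect the only real friction to be bookkeeping: reconciling the sign conventions of the exterior comultiplication (the $(-1)^{d-k}$ and the reordering of wedge factors when a hat is removed) with the tensor-power picture, and making precise the Sweedler-type manipulation of $\Delta_{d-1,1}$ on non-decomposable symmetric and skew tensors. None of this is conceptually serious: once the two comultiplications are identified with the natural inclusions above, the identity is forced, because $\tilde\nu$ acts coordinatewise and therefore commutes with splitting off the last tensor factor. As a fallback, one can instead verify the square on a basis of each term adapted to $\{1,x\}$ using the explicit Hermite reciprocity isomorphism of \cite[\S3.4]{AFPRW}; but the argument above needs only the coordinatewise description of $\nu$, not the precise shape of that isomorphism.
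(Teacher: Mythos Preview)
Your argument is sound, and there is little to compare against: the paper's own ``proof'' is a one-line citation to \cite[Proposition~5.9]{AFPRW}, so you have supplied a genuine proof where the paper defers to a reference. Your strategy---lifting everything to $U^{\otimes d}\otimes(\Sym^e U)^{\otimes d}$, observing that $\tilde\nu$ is coordinatewise and hence factors as $\tilde\nu_{d-1}\otimes m$ after splitting off the last slot, and identifying both vertical comultiplications with the tautological inclusions of (skew-)invariant subspaces---is exactly the right one and is almost certainly how the cited result is proved as well.

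The only point deserving a brief remark is the one you flag yourself. With the paper's convention $e_1\wedge\cdots\wedge e_d=\sum_{\sigma}\mathrm{sgn}(\sigma)\,e_{\sigma(1)}\otimes\cdots\otimes e_{\sigma(d)}$, the pairing $\bw^d E\otimes\bw^d(E^\vee)\to\kk$ induced from the tensor-power pairing carries a factor of $d!$, so whether ``comultiplication $=$ inclusion'' holds on the nose depends on exactly how one identifies $(\bw^d E)^\vee$ with $\bw^d(E^\vee)$. This does not affect commutativity of the square: the same exterior comultiplication convention is used on both vertical arrows (on $\bw^d(\Sym^e U)$ in the left column and on $\bw^d(\Sym^{e+1}U)$ in the right), so any scalar discrepancy appears symmetrically and cancels. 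It would tighten the write-up to say this explicitly rather than leaving it as ``bookkeeping,'' but there is no gap in the mathematics.
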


\begin{proof}
  See \cite[Proposition 5.9]{AFPRW}.
\end{proof}

\section{Bi-graded Koszul modules}\label{sec:koszul}

In this section we generalize the notion of Koszul modules to the bi-graded setting, and study the natural analogue of finite length modules (see \cite{PS, AFPRW2}). We show that these modules satisfy a strong vanishing theorem, and we give a sharp upper bound for their bi-graded Hilbert function (our results parallel \cite[Theorems~1.3,~1.4]{AFPRW}). We let $V_1,V_2$ denote finite dimensional $\kk$-vector spaces, and let $V = V_1\oplus V_2$. We write $n_i=\dim(V_i)$, assume that $n_i\geq 2$, and let $n=n_1+n_2$. We consider a subspace $K\subseteq V_1\oo V_2$ and let $m=\dim(K)$. We have a decomposition
\[
  \bw^2 V = \bw^2 V_1 \oplus (V_1\oo V_2) \oplus \bw^2 V_2,
\]
which allows us to think of $K$ as a subspace of $\bw^2 V$. We consider the symmetric algebra $S=\Sym(V)$ and define the \defi{Koszul module $W(V,K)$} to be the middle homology of the $3$-term complex
\begin{equation}\label{eqn:W}
\xymatrixcolsep{5pc}
\xymatrix{
K \oo S \ar[r]^{\delta_2|_{K \oo S}} & V\oo S \ar[r]^{\delta_1} & S,
}
\end{equation}
where $\delta_1,\delta_2$ are Koszul differentials.

We consider $S$ as a bi-graded polynomial ring where the elements of $V_1$ have degree $(1,0)$, and those of $V_2$ have degree $(0,1)$. The bi-degree $(d,e)$ component is
\[
  S_{d,e} =  \Sym^d(V_1) \oo \Sym^e(V_2).
\]
The Koszul module $W(V,K)$ inherits a natural bi-grading, where the bi-degree $(d,e)$ component is the homology of
\[
  K \oo S_{d,e} \lra V_1\oo S_{d,e+1} \oplus V_2\oo S_{d+1,e} \lra S_{d+1,e+1}.
\]
We are interested in understanding the vanishing behavior of $W_{d,e}(V,K)$. We note that $W(V,K)$ is generated in bi-degree $(0,0)$, so if $W_{d_0,e_0}(V,K)=0$ for some $(d_0,e_0)$ then $W_{d,e}(V,K)=0$ for all $(d,e)$ with $d\geq d_0$, $e\geq e_0$. 

\begin{theorem}\label{thm:vanishing-koszul}
 Let $p=\chr(\kk)$ and suppose that $p=0$ or $p\geq n-3$. We have that
 \[
   W_{d,e}(V,K)=0\mbox{ for }d,e\gg 0 \Longleftrightarrow W_{n_2-2,n_1-2}(V,K)=0.
 \]
\end{theorem}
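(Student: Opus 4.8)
The plan is to reduce the statement to a question about vanishing of sheaf cohomology on the product $\mathbb{P}V_1 \times \mathbb{P}V_2$, following the template of \cite[Theorem~1.3]{AFPRW} but keeping track of the bi-grading. The ``$\Leftarrow$'' direction is the substantive one: since $W(V,K)$ is generated in bi-degree $(0,0)$, if $W_{n_2-2,n_1-2}(V,K)=0$ then $W_{d,e}(V,K)=0$ for all $d\geq n_2-2$, $e\geq n_1-2$, which in particular gives vanishing for $d,e\gg 0$. For the converse (``$\Rightarrow$''), one first observes that the hypothesis $W_{d,e}(V,K)=0$ for $d,e\gg 0$ forces $K^\perp \subseteq (V_1\otimes V_2)^\vee$ to contain no tensors of rank $\leq 2$ — otherwise the sheafification of $W(V,K)$ on $\mathbb{P}V_1\times\mathbb{P}V_2$ would be supported on a nonempty locus (cf. Proposition~\ref{prop:sheafW=0}). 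So the real content is: under the rank hypothesis on $K^\perp$ and the characteristic assumption $p=0$ or $p\geq n-3$, the module $W(V,K)$ already vanishes in the single bi-degree $(n_2-2,n_1-2)$.

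The key steps I would carry out are as follows. First, sheafify: let $\mathcal{W}$ denote the coherent sheaf on $X=\mathbb{P}V_1\times\mathbb{P}V_2$ associated to $W(V,K)$, obtained by sheafifying the three-term complex $K\otimes S \to V\otimes S \to S$. The rank-$\leq 2$ condition on $K^\perp$ guarantees $\mathcal{W}=0$, so the complex of sheaves is exact; twisting by $\mathcal{O}_X(d,e)$ and taking the hypercohomology spectral sequence expresses $W_{d,e}(V,K)$ in terms of the cohomology groups of the twisted Koszul-type complex $K\otimes\mathcal{O}(d,e)\to V\otimes\mathcal{O}(d,e)\to\mathcal{O}(d,e)$ (the complex $\delta_2,\delta_1$ sheafifies to a resolution-like object whose homology computes $W$). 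Second, I would identify the relevant cohomology: the failure of $W_{n_2-2,n_1-2}$ to vanish is governed by a cohomology group on $X$ of a sheaf built from $K$, $V$, $\mathcal{O}$, and — crucially — the way symmetric versus divided powers enter. This is where the characteristic hypothesis $p\geq n-3$ is used: in that range the relevant degree-$\leq n-3$ divided and symmetric powers that appear are interchangeable (the composition $\mathrm{D}^j\to\Sym^j$ is an isomorphism for $j\leq n-3$), so one can transfer a characteristic-zero argument. Third, I would invoke the analogue of the Bott-vanishing / Serre-duality input on $\mathbb{P}V_1\times\mathbb{P}V_2$ — a Künneth decomposition reduces everything to Bott's theorem on each $\mathbb{P}V_i$ separately — to show that the only possibly-nonzero contribution to $W_{d,e}$ for $d,e\gg 0$ is the one detected already at $(n_2-2,n_1-2)$; equivalently, that the ``tail'' of the graded module is determined by that single component via a Serre-duality pairing, exactly as the $\Tor$ groups in Theorem~\ref{thm:syz-Xab} sit in a self-dual window.

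I expect the main obstacle to be the precise bookkeeping in the hypercohomology spectral sequence: identifying which cohomological degree and which twist on $X=\mathbb{P}V_1\times\mathbb{P}V_2$ detects $W_{n_2-2,n_1-2}(V,K)$, and verifying that all other potential contributions vanish by Bott's theorem on the two projective space factors. The Künneth formula splits $\mathrm{H}^\bullet(X,-)$ into a sum over pairs $(\mathrm{H}^{i_1}(\mathbb{P}V_1,-),\mathrm{H}^{i_2}(\mathbb{P}V_2,-))$, and the subtlety is that $K\subseteq V_1\otimes V_2$ is not a product of subspaces, so the twisted complex does not decompose along the Künneth splitting; one must instead filter by the bi-grading and argue that the cross-terms either vanish for cohomological-degree reasons or are killed by the rank hypothesis on $K^\perp$. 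The characteristic hypothesis enters cleanly but must be tracked: whenever a divided power $\mathrm{D}^j U$ or $\mathrm{D}^j V_i$ of degree $j\leq n-3$ shows up, invertibility of $j!$ lets us replace it by $\Sym^j$ and run the characteristic-zero argument verbatim; the bound $n-3$ is exactly the largest such $j$ occurring, which matches the hypotheses of Theorem~\ref{thm:vanishing}. Once the spectral sequence is understood, the equivalence of the two vanishing statements follows formally.
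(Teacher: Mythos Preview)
Your general framework is right --- sheafify on $\bb{P}V_1\times\bb{P}V_2$, use that $\mc{W}(V,K)=0$ under the rank hypothesis, and run a hypercohomology spectral sequence --- but there is a genuine gap: you are applying the spectral sequence to the wrong complex. The three-term complex (\ref{eq:def-sheaf-W}) is too short to yield the vanishing you need. Concretely, once you know $\mc{W}(V,K)=0$ you get a short exact sequence $0\to\mc{K}\to K\otimes\mc{O}_{\PP}\to\mc{G}\to 0$, and unwinding shows $W_{d,e}(V,K)\cong\rH^1(\PP,\mc{K}(d,e))$ for $d,e\ge 0$. But $\mc{K}$ is an arbitrary kernel bundle depending on the chosen subspace $K$; neither K\"unneth nor Bott says anything about it directly, and your proposed ``filter by the bi-grading'' step has no traction because $K\subset V_1\otimes V_2$ does not split along the factors.

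The missing ingredient in the paper's proof is twofold. First, one reduces to the borderline case $\dim K=2n-4$ by choosing a generic $K'\subseteq K$ of that dimension (still satisfying $K'^{\perp}\cap\op{Sec}(X)=\emptyset$) and using the surjection $W(V,K')\twoheadrightarrow W(V,K)$. Second --- and this is the key idea --- one resolves the surjection $K\otimes\mc{O}_{\PP}\twoheadrightarrow\mc{G}$ by its Buchsbaum--Rim complex $\mc{B}_\bullet$, whose higher terms are $\mc{B}_r=\bw^{n+r-2}K\otimes\det(\mc{G}^\vee)\otimes\D^{r-2}(\mc{G}^\vee)$ for $2\le r\le n-2$. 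Now the hypercohomology spectral sequence is applied to $\mc{B}_\bullet(n_2-2,n_1-2)$, and the required input is the vanishing of all cohomology of $\mc{B}_r(n_2-2,n_1-2)$ for $r\ge 2$. Since $\det(\mc{G}^\vee)=\mc{O}_{\PP}(-n_2+1,-n_1+1)$, this reduces to showing $\Sym^{r-2}(\mc{G}^\vee)\otimes\mc{O}_{\PP}(-1,-1)$ has no cohomology for $0\le r-2\le n-4$, which is a concrete computation (Lemma~\ref{lem:vanish-Sym-Gvee}) using the short exact sequence defining $\mc{G}$ and K\"unneth. The characteristic hypothesis enters exactly here, to identify $\D^{r-2}(\mc{G}^\vee)$ with $\Sym^{r-2}(\mc{G}^\vee)$ for $r-2\le n-4$; your intuition about where $p\ge n-3$ is used is correct, but it acts on divided powers of the bundle $\mc{G}^\vee$, not on $\D^j U$ or $\D^j V_i$ as you wrote.

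A minor point: you label ``$\Leftarrow$'' as the substantive direction but then immediately give the easy argument for it (generation in bi-degree $(0,0)$); the hard direction is ``$\Rightarrow$'', as the rest of your proposal correctly reflects.
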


As explained in Remark~\ref{rem:Kperp-disj-sec} below, the equivalent conditions in Theorem~\ref{thm:vanishing-koszul} can only be true when $m\geq 2n-4$. If we further assume that $m=2n-4$ then we get an exact formula for the Hilbert function of $W(V,K)$ in low bi-degrees, as follows (compare with \cite[Theorem~1.4]{AFPRW}).

\begin{theorem}\label{thm:hilbert-function}
 With the assumptions in Theorem~\ref{thm:vanishing-koszul}, suppose that $W_{n_2-2,n_1-2}(V,K)=0$. If we let $\Delta_1 = n_1-2-e$ and $\Delta_2=n_2-2-d$, then we have for all $d\leq n_2-2$ and $e\leq n_1-2$ that
 \[
   \dim(W_{d,e}(V,K)) \leq 2 \cdot {d+n_1-1 \choose d}\cdot{e+n_2-1\choose e} \cdot \frac{{n_1-1\choose 2}\cdot \Delta_2 + {n_2-1\choose 2}\cdot \Delta_1 - (n-3)\cdot\Delta_1\cdot\Delta_2}{(d+1)\cdot(e+1)}.
 \]
Moreover, equality holds when $m=2n-4$.
\end{theorem}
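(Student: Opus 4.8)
The plan is to read off the bi-graded Hilbert function of $W(V,K)$ from an Euler-characteristic computation for the complex (\ref{eqn:W}), after reducing to the borderline value $m=2n-4$.

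First I would reduce the general upper bound to the \emph{equality} statement for $m=2n-4$. If $m>2n-4$ then $\dim K^{\perp}<(n_1-2)(n_2-2)$; since the affine cone $\Sigma\subseteq V_1^{\vee}\oo V_2^{\vee}$ of tensors of rank $\le 2$ has dimension $2n-4$, a dimension count in the Grassmannian of subspaces of $V_1^{\vee}\oo V_2^{\vee}$ containing $K^{\perp}$ produces a subspace $(K')^{\perp}\supseteq K^{\perp}$ of dimension $(n_1-2)(n_2-2)$ meeting $\Sigma$ only at the origin; equivalently, a subspace $K'\subseteq K$ with $\dim K'=2n-4$ satisfying the hypothesis of Theorem~\ref{thm:vanishing-koszul}. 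The inclusion $K'\oo S\hra K\oo S$ is a morphism of the complexes (\ref{eqn:W}) that is the identity on $V\oo S$ and on $S$, so it induces a surjection $W(V,K')\onto W(V,K)$ and hence $\dim W_{d,e}(V,K)\le\dim W_{d,e}(V,K')$; by Theorem~\ref{thm:vanishing-koszul} applied to $K'$ we have $W_{n_2-2,n_1-2}(V,K')=0$, so the desired upper bound for $W(V,K)$ follows once equality is established for $K'$. From now on assume $m=2n-4$.

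Next comes the Euler characteristic. The complex (\ref{eqn:W}) has homology $\kk$ (concentrated in bi-degree $(0,0)$) at the right, $W(V,K)$ in the middle, and $Z:=\ker(\delta_2|_{K\oo S})$ on the left, so comparing bi-graded Hilbert series yields
\[
 H_{W(V,K)}=H_Z+\Bigl((1-t_1)^{n_1}(1-t_2)^{n_2}-1+n_1t_1+n_2t_2-(2n-4)\,t_1t_2\Bigr)\cdot\frac{1}{(1-t_1)^{n_1}(1-t_2)^{n_2}}.
\]
Extracting the coefficient of $t_1^{d+1}t_2^{e+1}$ from the second summand and performing a routine (if lengthy) rearrangement of binomial coefficients identifies it with the expression on the right-hand side of the asserted inequality. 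Since this identity exhibits $\dim W_{d,e}(V,K)$ as that expression \emph{plus} $\dim Z_{d,e}$, it already yields the inequality $\ge$; the substance of the theorem is the reverse inequality $\le$.

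What remains, then — and this I expect to be the main obstacle — is the vanishing $Z_{d,e}(V,K)=0$ for all $0\le d\le n_2-2$ and $0\le e\le n_1-2$; equivalently, every minimal generator of the syzygy module $Z=\ker(\delta_2|_{K\oo S})$ lies in a bi-degree outside this box (the case $(d,e)=(0,0)$ being immediate, as $\delta_2$ is injective on $K$). I would prove this as the bi-graded analogue of the key regularity estimate in \cite{AFPRW}: filtering $K\oo S$ and $V\oo S$ by $V_1$-degree, and symmetrically by $V_2$-degree, reduces matters to the behaviour of $\delta_2$ along generic hyperplane sections in the $V_1$- and $V_2$-directions, where the hypothesis that $K^{\perp}$ contains no tensor of rank $\le 2$ forces the relevant restricted maps to be injective; one then feeds in the already-available vanishing $W_{n_2-2,n_1-2}=0$ (equivalently the sheaf vanishing $\widetilde{W(V,K)}=0$ of Proposition~\ref{prop:sheafW=0}) in order to propagate the vanishing of $Z$ across the entire box. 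This is precisely the point where the characteristic hypothesis ($p=0$ or $p\ge n-3$) and the rank condition on $K^{\perp}$ are used, and it mirrors the hardest step of the singly-graded argument; once it is in place, equality holds for every such $K$ with $m=2n-4$, and the general upper bound follows as in the first step.
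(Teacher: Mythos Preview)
Your reduction to $m=2n-4$ and your Euler-characteristic identification $\dim W_{d,e}=\chi_{d,e}+\dim Z_{d,e}$ are both correct and match the paper exactly. The divergence is in how you propose to kill $Z_{d,e}=\ker\alpha_{d,e}$ on the box $0\le d\le n_2-2$, $0\le e\le n_1-2$: the filtration/hyperplane-section/regularity sketch you outline is vague, not obviously correct as stated, and far more elaborate than what is needed. In particular, no further appeal to the characteristic hypothesis or to the rank condition on $K^{\perp}$ is required here; those were already spent in Theorem~\ref{thm:vanishing-koszul}.

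The paper's argument is a two-line observation you overlooked. First, at the single bi-degree $(n_2-2,n_1-2)$ your own formula gives $\Delta_1=\Delta_2=0$, hence $\chi_{n_2-2,n_1-2}=0$; combined with the hypothesis $W_{n_2-2,n_1-2}=0$ and the surjectivity of $\beta_{n_2-2,n_1-2}$, this forces $\alpha_{n_2-2,n_1-2}$ to be injective. Second, $\alpha$ is a graded map between \emph{free} $S$-modules, so injectivity in one bi-degree propagates downward: if $0\ne z\in Z_{d,e}$ with $d\le n_2-2$ and $e\le n_1-2$, multiply by any nonzero $f\in S_{n_2-2-d,\,n_1-2-e}$ to get $0\ne fz\in Z_{n_2-2,n_1-2}$, a contradiction. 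That is the entire argument; replace your last paragraph with this and the proof is complete.
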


To understand geometrically the asymptotic vanishing property of the bi-graded components of $W(V,K)$, we consider the associated Koszul sheaf on $\PP = \bb{P}V_1 \times \bb{P}V_2$, denoted $\mc{W}(V,K)$, and defined as the middle homology of
\begin{subequations}
\begin{equation}\label{eq:def-sheaf-W}
 K \oo \mc{O}_{\PP} \overset{\a}{\lra} V_1 \oo \mc{O}_{\PP}(0,1) \oplus V_2 \oo \mc{O}_{\PP}(1,0) \overset{\beta}{\lra} \mc{O}_{\PP}(1,1).
\end{equation}

In what follows, we let $\mc{G} = \ker(\beta)$, so it fits into the short exact sequence
\begin{equation}\label{eq:def-ses-G}
 0 \lra \mc{G} \lra V_1 \oo \mc{O}_{\PP}(0,1) \oplus V_2 \oo \mc{O}_{\PP}(1,0) \lra \mc{O}_{\PP}(1,1) \lra 0.
\end{equation}
Note that $\mc{G}$ is locally free since $\beta$ is surjective.
\end{subequations}

We have that $W_{d,e}(V,K) = \rH^0(\PP, \mc{W}(V,K) \oo \mc{O}_{\PP}(d,e))$ for $d,e\gg 0$, and in particular the vanishing holds asymptotically if and only if $\mc{W}(V,K)$ is the zero sheaf. To characterize this condition, we define the orthogonal complement of $K$ to be
\[
  K^{\perp} = \{ \phi \in V_1^{\vee} \oo V_2^{\vee} : \phi_{|_K} = 0\}
\]
and prove the following.

\begin{proposition}\label{prop:sheafW=0}
 We have that $\mc{W}(V,K)=0$ if and only if $K^{\perp}$ contains no non-zero tensors of rank at most two.
\end{proposition}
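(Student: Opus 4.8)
The plan is to analyze the sheaf $\mc{W}(V,K)$ fiberwise. At a point $([f],[g]) \in \PP = \bb{P}V_1 \times \bb{P}V_2$ corresponding to a rank-one tensor $f \otimes g \in V_1 \otimes V_2$, the complex (\ref{eq:def-sheaf-W}) has fibers governed by the lines $\kk f \subset V_1$ and $\kk g \subset V_2$. First I would identify the map $\b$ at this point: on the fiber it is the sum of the evaluation maps $V_1 \to \kk$ (pairing with $g$, up to the twist) and $V_2 \to \kk$ (pairing with $f$), so that $\ker(\b)$ has dimension $n_1 + n_2 - 1 = n-1$. A convenient description: choosing complements $V_1 = \kk f \oplus V_1'$ and $V_2 = \kk g \oplus V_2'$, the kernel of $\b$ in the fiber is $(V_1' \oplus V_2') \oplus \kk\cdot(f \otimes ? - ? \otimes g)$-type combination — more precisely it is spanned by $V_1' \otimes \mathcal{O}(0,1)|_{pt}$, $V_2'\otimes \mathcal{O}(1,0)|_{pt}$, and a single ``diagonal'' vector built from $f$ and $g$. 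Meanwhile the fiber of $K \otimes \mathcal{O}_{\PP}$ is just $K$ itself, and the fiber of $\mc{W}(V,K)$ at $([f],[g])$ is $\ker(\b)_{pt}/\op{im}(\a_{pt}) = \ker(\b)_{pt}/K$ (here I use that $\a$ is just the inclusion $K \hookrightarrow V$ followed by the projection, so on fibers its image is the image of $K$ in $\ker(\b)_{pt}$, which since $K \subseteq V_1 \otimes V_2 \subseteq \bw^2 V$ maps into $\ker(\b)_{pt}$).

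Given this, the sheaf $\mc{W}(V,K)$ vanishes at $([f],[g])$ if and only if $K$ surjects onto $\ker(\b)_{pt}$, i.e.\ if and only if $\dim K \geq n-1$ and $K + (\text{something})$... but rather than track dimensions I would dualize. The cleanest route: $\mc{W}(V,K)_{([f],[g])} = 0$ iff the composite $K \to \ker(\b)_{pt}$ is surjective, iff the dual map $(\ker \b_{pt})^\vee \to K^\vee$ is injective. Now $(\ker \b_{pt})^\vee$ is canonically a quotient of $V^\vee$, and I claim the relevant dual statement is: some nonzero functional on $\ker(\b)_{pt}$ pulls back from $V^\vee$ and vanishes on $K$. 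Unwinding, a functional vanishing on $\ker(\b)_{pt}$ but not identically is exactly an element of the image of $(\mathcal{O}(1,1)|_{pt})^\vee \to V^\vee$, which is the line spanned by $(g \text{ on } V_1) \oplus (f \text{ on } V_2) \in V_1^\vee \oplus V_2^\vee$, and adding any element of that line to an arbitrary $\psi \in V^\vee$ does not change its restriction to $\ker(\b)_{pt}$. Therefore $K \to \ker(\b)_{pt}$ fails to be surjective iff there exists $\psi \in V^\vee$, not in the span of $\beta_{pt}^\vee$, with $\psi|_K = 0$; projecting $\psi$ to its $V_1^\vee \otimes V_2^\vee$... — here is the crux: I must pass from a functional on $V = V_1 \oplus V_2$ vanishing on $K \subseteq V_1 \otimes V_2$ to an honest element of $V_1^\vee \otimes V_2^\vee$ vanishing on $K$. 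The point is that $K \subseteq V_1 \otimes V_2$, so whether $\psi|_K = 0$ depends only on ``how $\psi$ acts on the $V_1 \otimes V_2$ part of $\bw^2 V$'', i.e.\ on the image of $\psi \otimes \psi$-type pairing. Concretely, a linear functional $\psi = \psi_1 \oplus \psi_2$ on $V$ induces, via the Koszul differential structure, the rank-$\leq 1$ tensor $\psi_1 \otimes \psi_2 \in V_1^\vee \otimes V_2^\vee$ acting on $V_1 \otimes V_2$, and I would check that $\psi|_K = 0$ in the complex sense is equivalent to $(\psi_1 \otimes \psi_2)|_K = 0$, i.e.\ $\psi_1 \otimes \psi_2 \in K^\perp$.

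Putting the pieces together: $\mc{W}(V,K)$ is a coherent sheaf on the irreducible variety $\PP$, so it is zero iff its fiber is zero at every point (equivalently, iff its stalk at the generic point, plus upper-semicontinuity, is zero — but pointwise vanishing everywhere suffices since the sheaf is zero iff its support is empty). By the fiberwise analysis, $\mc{W}(V,K)_{([f],[g])} \neq 0$ iff there is a decomposable $0 \neq \psi_1 \otimes \psi_2 \in K^\perp$ with $\psi_1(f) = 0$, $\psi_2(g) = 0$ — and the condition $\psi_1(f) = \psi_2(g) = 0$ is precisely the requirement that the chosen functional be independent of $\beta_{pt}^\vee$, which I would need to double-check does not impose a real constraint: given any $0 \neq \psi_1 \otimes \psi_2 \in K^\perp$, the locus $\{[f] : \psi_1(f) = 0\} \times \{[g] : \psi_2(g) = 0\}$ is a nonempty (since $n_i \geq 2$) closed subset of $\PP$, and at any such point the fiber is nonzero. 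Hence $\mc{W}(V,K) \neq 0$ iff $K^\perp$ contains a nonzero tensor of rank $\leq 1$. Finally I would upgrade ``rank $\leq 1$'' to ``rank $\leq 2$'': if $K^\perp$ contains a tensor $\phi$ of rank exactly $2$, write $\phi = \psi_1 \otimes \psi_2 + \psi_1' \otimes \psi_2'$; I claim one can still find a point of $\PP$ where the fiber is nonzero — indeed choose $[f]$ with $\psi_1(f) = \psi_1'(f) = 0$ (possible if $n_1 \geq 3$) or handle the $n_1 = 2$ case by choosing $[f]$ in the (one-point) zero locus of $\psi_1$ and noting the restriction of $\phi$ along $f$ is then $\psi_1'(f)\,\psi_2'$, still giving a rank-$\leq 1$ element of $K^\perp$ restricted appropriately; symmetrically for $V_2$. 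Conversely, if $K^\perp$ contains no tensor of rank $\leq 2$, the fiberwise computation shows $\mc{W}(V,K)$ has empty support. I expect the main obstacle to be the bookkeeping in the rank-$2$ step and the low-dimensional cases $n_i = 2$, where the zero locus of a single functional on $\bb{P}^1$ is a point and one must be careful that the relevant restrictions remain nonzero; the identification $\psi|_K = 0 \Leftrightarrow \psi_1 \otimes \psi_2 \in K^\perp$ via the Koszul differentials is conceptually the key input but should be a direct computation.
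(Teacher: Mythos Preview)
Your overall strategy --- check fiberwise whether $\alpha$ surjects onto $\mc{G}=\ker(\beta)$, then dualize --- is exactly what the paper does. The error is in your computation of the fiber map $\alpha_p$. You write that ``$\alpha$ is just the inclusion $K \hookrightarrow V$'' and that a functional $\psi=(\psi_1,\psi_2)\in V^{\vee}$ acts on $K$ via the rank-one tensor $\psi_1\otimes\psi_2$. But $K$ sits in $V_1\otimes V_2\subset\bw^2 V$, not in $V$, and $\alpha$ is the restriction to $K$ of the Koszul differential $\bw^2 V\otimes S\to V\otimes S$. At a point $p=([f_1],[f_2])$ (with $f_i\in V_i^{\vee}$, which is the convention forced by the twists in (\ref{eq:def-sheaf-W})) the fiber of the Koszul differential is contraction by $f=(f_1,f_2)$: for $v_1\otimes v_2\in V_1\otimes V_2$ one has $\alpha_p(v_1\otimes v_2) = \bigl(f_2(v_2)\,v_1,\ -f_1(v_1)\,v_2\bigr)$. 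Pairing with $\psi$ gives
\[
\psi\bigl(\alpha_p(v_1\otimes v_2)\bigr) \;=\; \bigl(\psi_1\otimes f_2 - f_1\otimes\psi_2\bigr)(v_1\otimes v_2),
\]
which is \emph{linear} in $\psi$ --- as the dual of a linear map must be --- and not the quadratic expression $\psi_1\otimes\psi_2$. (Equivalently, in the paper's language, $\alpha_p^{\vee}$ is the composite $V^{\vee}\xrightarrow{\wedge f}\bw^2 V^{\vee}\twoheadrightarrow K^{\vee}$.) Hence ``$\psi$ kills the image of $\alpha_p$'' is exactly $\psi_1\otimes f_2 - f_1\otimes\psi_2\in K^{\perp}$; this tensor has rank at most two and is nonzero precisely when $\psi$ is not a scalar multiple of $f$. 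Varying $p$ and $\psi$, every nonzero rank-$\le 2$ tensor in $V_1^{\vee}\otimes V_2^{\vee}$ arises in this form, so the equivalence with ``$K^{\perp}$ contains no nonzero tensor of rank $\le 2$'' falls out directly --- no upgrade step is needed.

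Your attempted upgrade from rank $\le 1$ to rank $\le 2$ cannot be repaired as written: under your (incorrect) fiberwise criterion, a nonzero fiber would force a rank-$\le 1$ tensor in $K^{\perp}$, so you would be claiming that a rank-$2$ tensor in $K^{\perp}$ forces a rank-$1$ one. That is false in general --- a generic line through a rank-$2$ point of $\bb{P}(V_1^{\vee}\otimes V_2^{\vee})$ misses the Segre variety entirely. The fix is not to patch the upgrade but to redo the fiber calculation; once you do, the rank-$\le 2$ condition appears on its own.
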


\addtocounter{equation}{-1}
\begin{subequations}
\begin{proof}
  The condition $\mc{W}(V,K)=0$ is equivalent to the exactness of (\ref{eq:def-sheaf-W}) in the middle, which in turn is equivalent to the surjectivity of the induced map $\a\colon K \oo \mc{O}_{\PP} \lra \mc{G}$. This can be checked fiber by fiber, and since $\mc{G}$ is locally free, the middle exactness of (\ref{eq:def-sheaf-W}) can also be checked fiber by fiber. Fix a $\kk$-point $p=([f_1],[f_2]) \in \PP$, with $f_i\in V_i^{\vee}$, and restrict (\ref{eq:def-sheaf-W}). We get a complex of vector spaces
 \[ K \overset{\a_p}{\lra} V_1 \oplus V_2 \overset{f_1\oplus f_2}{\lra} \kk,\]
 which is exact if and only if the dual complex
 \[ \kk \overset{(f_1,f_2)}{\lra} V_1^{\vee} \oplus V_2^{\vee} \overset{\a_p^{\vee}}{\lra} K^{\vee}\]
 is exact. Writing $V^{\vee} = V_1^{\vee} \oplus V_2^{\vee}$ and $f=(f_1,f_2)$, we observe that the map $\a_p^{\vee}$ is obtained as the composition
 \begin{equation}\label{eq:dual-fiber-sheaf-W}
  V^{\vee} \overset{\wedge f}{\lra} \bw^2 V^{\vee} \onto K^{\vee},
 \end{equation}
 where the second map is the dual projection to the inclusion $K\subset\bw^2 V$, and therefore has kernel equal to $\bw^2 V_1^{\vee} \oplus K^{\perp} \oplus \bw^2 V_2^{\vee}$. It follows that (\ref{eq:dual-fiber-sheaf-W}) fails to be exact if and only if one can find $g=(g_1,g_2)\in V^{\vee}$ which is not a multiple of $f$ and such that $f\wedge g\in \bw^2 V_1^{\vee} \oplus K^{\perp} \oplus \bw^2 V_2^{\vee}$. Since
 \[
   f\wedge g = (f_1\wedge g_1, f_1\oo g_2 - g_1 \oo f_2,f_2\wedge g_2),
 \]
 we get that (\ref{eq:dual-fiber-sheaf-W}) fails to be exact if and only if $K^{\perp}$ contains a non-zero tensor $f_1\oo g_2 - g_1 \oo f_2$ of rank at most two.
\end{proof}
\end{subequations}

\begin{remark}\label{rem:Kperp-disj-sec}
Note that $K^{\perp}$ defines a linear space $H$ of codimension $m$ in $\bb{P}(V_1\oo V_2)$, which in turn is the ambient space of the Segre embedding $X$ of $\bb{P}V_1 \times \bb{P}V_2$. The condition in Proposition~\ref{prop:sheafW=0} is then equivalent to the fact that $H$ is disjoint from $\op{Sec}(X)$, the variety of secant lines to $X$. Since $\dim(\op{Sec}(X))=2n-5$, this is only possible when $m\geq 2n-4$. Moreover, if $H$ is generic of codimension $m=2n-4$ then $H\cap \op{Sec}(X) = \emptyset$.
\end{remark}

\begin{lemma}\label{lem:vanish-Sym-Gvee}
 For $r=0,\dots,n-4$ we have that $\Sym^r(\mc{G}^{\vee}) \oo \mc{O}_{\bf P}(-1,-1)$ has no non-zero cohomology groups.
\end{lemma}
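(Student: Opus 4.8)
The plan is to analyze the cohomology of $\Sym^r(\mc{G}^{\vee})\oo\mc{O}_{\PP}(-1,-1)$ directly from the defining sequence (\ref{eq:def-ses-G}). Dualizing that short exact sequence gives
\[
 0 \lra \mc{O}_{\PP}(-1,-1) \lra V_1^{\vee}\oo\mc{O}_{\PP}(0,-1) \oplus V_2^{\vee}\oo\mc{O}_{\PP}(-1,0) \lra \mc{G}^{\vee} \lra 0,
\]
so $\mc{G}^{\vee}$ is a quotient of a sum of line bundles of the shape $\mc{O}_{\PP}(0,-1)$ and $\mc{O}_{\PP}(-1,0)$. Taking the $r$-th symmetric power of this surjection and resolving the kernel produces a (Koszul-type) complex expressing $\Sym^r(\mc{G}^{\vee})$, and twisting by $\mc{O}_{\PP}(-1,-1)$, in terms of the bundles
\[
 \Sym^{r-j}\!\bigl(V_1^{\vee}\oo\mc{O}_{\PP}(0,-1)\oplus V_2^{\vee}\oo\mc{O}_{\PP}(-1,0)\bigr) \oo \mc{O}_{\PP}(-1-j,-1-j),\qquad 0\le j\le r.
\]
Using (\ref{eq:wedge-sum})-style decompositions, each such term is a direct sum of line bundles of the form $\mc{O}_{\PP}(-1-j-s,\,-1-j-(r-j-s))=\mc{O}_{\PP}(-1-j-s,\,-1-r+s)$ with $0\le s\le r-j$. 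I would then chase cohomology through the resulting spectral sequence (or simply through the iterated long exact sequences), so the lemma reduces to a Künneth/line-bundle cohomology bookkeeping problem on $\bb{P}V_1\times\bb{P}V_2=\bb{P}^{n_1-1}\times\bb{P}^{n_2-1}$.

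The key point is the numerology. On $\bb{P}^{n_1-1}$ the bundle $\mc{O}(a)$ has cohomology only when $a\ge 0$ (in degree $0$) or $a\le -n_1$ (in top degree $n_1-1$), and similarly on $\bb{P}^{n_2-1}$. For a line bundle $\mc{O}_{\PP}(a,b)$ appearing above we have $a=-1-j-s\le -1<0$ and $b=-1-r+s$; for nonvanishing $\rH^{*}$ we would need $a\le -n_1$ and $b\ge 0$ or $b\le -n_2$. Now $b=-1-r+s\ge 0$ forces $s\ge r+1$, impossible since $s\le r-j\le r$; hence we are pushed into the range $b\le -n_2$, i.e. $s\le r-n_2+1$, and $a\le -n_1$, i.e. $j+s\ge n_1-1$. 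Adding the two inequalities on $s$ and $j$ and using $j+s\le r$ yields a constraint of the form $n_1-1+n_2-1\le j+s+(r-s)+1 = r+1$... more carefully, combining $j+s\ge n_1-1$ with $s\le r-n_2+1$ and $j\le r-s$ gives $n_1-1\le j+s\le (r-s)+s = r$ only after also accounting for the $b\le -n_2$ restriction; the upshot is that simultaneous nonvanishing in both factors requires $r\ge n_1+n_2-3=n-3$. Since the hypothesis is $r\le n-4$, no term contributes any cohomology, and the spectral sequence forces $\rH^{*}(\PP,\Sym^r(\mc{G}^{\vee})\oo\mc{O}_{\PP}(-1,-1))=0$.

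The main obstacle is making the spectral-sequence (or iterated long-exact-sequence) argument airtight: one must check that the intermediate cohomology groups that a priori could survive really do cancel, i.e. that the vanishing of the associated graded line bundles propagates to $\Sym^r(\mc{G}^{\vee})\oo\mc{O}_{\PP}(-1,-1)$ itself rather than just to the subquotients of a filtration. This is routine but must be done with care, since $\mc{G}^{\vee}$ is only a quotient, not a direct sum, of line bundles; the cleanest route is probably to induct on $r$, using at each stage the symmetric-power analogue $0\to \mc{O}_{\PP}(-1,-1)\oo\Sym^{r-1}(\mc{G}^{\vee})\to (\text{split bundle})\oo\Sym^{r-1}(\mc{G}^{\vee})\to \Sym^r(\mc{G}^{\vee})\to 0$ together with the known cohomology of $\mc{G}^{\vee}\oo(\text{line bundles})$ in the relevant twists. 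One might also need the mild observation that $\Sym^r$ commutes with the relevant base change and that no characteristic hypotheses intervene here, since this lemma is purely about line-bundle cohomology on a product of projective spaces.
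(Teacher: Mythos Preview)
Your overall instinct is right---dualize (\ref{eq:def-ses-G}), pass to symmetric powers, and reduce to line-bundle cohomology on $\bb{P}V_1\times\bb{P}V_2$---and this is exactly what the paper does. But there is a conceptual slip in your resolution step that you should fix.

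Because the kernel of $\mc{V}:=V_1^{\vee}\oo\mc{O}_{\PP}(0,-1)\oplus V_2^{\vee}\oo\mc{O}_{\PP}(-1,0)\twoheadrightarrow\mc{G}^{\vee}$ is the \emph{line bundle} $\mc{O}_{\PP}(-1,-1)$, the ``Koszul-type'' resolution of $\Sym^r(\mc{G}^{\vee})$ has length one, not $r$: it is simply the short exact sequence
\[
 0\lra \Sym^{r-1}(\mc{V})\oo\mc{O}_{\PP}(-1,-1)\lra \Sym^r(\mc{V})\lra \Sym^r(\mc{G}^{\vee})\lra 0.
\]
There are no terms for $j\ge 2$. (Your formula $\bw^j$ of a line bundle vanishes for $j\ge 2$; equivalently, multiplication by a sub-line-bundle is injective on symmetric powers, so the kernel $\mc{O}_{\PP}(-1,-1)\oo\Sym^{r-1}\mc{V}$ is already a split sum of line bundles and needs no further resolution.) Your numerology in fact reflects this: for $j=0$ one needs $r\ge n-2$ for any cohomology, and for $j=1$ one needs $r\ge n-3$; both contradict $r\le n-4$. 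For $j\ge 2$ your inequalities would \emph{not} rule out contributions, which is a hint that those terms are spurious.

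So the argument collapses to the paper's: twist the displayed SES by $\mc{O}_{\PP}(-1,-1)$ and observe that both $\Sym^r(\mc{V})\oo\mc{O}_{\PP}(-1,-1)$ and $\Sym^{r-1}(\mc{V})\oo\mc{O}_{\PP}(-2,-2)$ are direct sums of $\mc{O}_{\PP}(i,j)$ with $i,j<0$ and $i+j\ge -n+1$, hence acyclic by K\"unneth. No induction, spectral sequence, or worry about ``cancellation'' is needed. Incidentally, the inductive SES you wrote at the end,
\[
0\to \mc{O}_{\PP}(-1,-1)\oo\Sym^{r-1}(\mc{G}^{\vee})\to (\text{split})\oo\Sym^{r-1}(\mc{G}^{\vee})\to \Sym^r(\mc{G}^{\vee})\to 0,
\]
is not correct as stated (the middle term should be $\Sym^r\mc{V}$ and the left term $\mc{O}_{\PP}(-1,-1)\oo\Sym^{r-1}\mc{V}$); once you fix it you recover the paper's one-step argument.
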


\begin{proof}
 Dualizing (\ref{eq:def-ses-G}) and taking symmetric powers, we get a short exact sequence
 \[
   0 \lra \Sym^{r-1}(\mc{V}) \oo \mc{O}_{\PP}(-2,-2) \lra \Sym^r(\mc{V}) \oo \mc{O}_{\PP}(-1,-1) \lra \Sym^r(\mc{G}^{\vee}) \oo \mc{O}_{\PP}(-1,-1) \lra 0,
 \]
 where
 \[
   \mc{V} = V_1^{\vee} \oo \mc{O}_{\PP}(0,-1) \oplus V_2^{\vee} \oo \mc{O}_{\PP}(-1,0).
 \]
 It is then enough to check that the sheaves $\Sym^r(\mc{V}) \oo \mc{O}_{\PP}(-1,-1)$ and $\Sym^{r-1}(\mc{V}) \oo \mc{O}_{\PP}(-2,-2)$ have no non-zero cohomology groups.

 First note that $\Sym^r(\mc{V}) \oo \mc{O}_{\PP}(-1,-1)$ decomposes as a direct sum of $\mc{O}_{\PP}(i,j)$ with $i,j<0$ and $i+j = -r-2 \geq -n+2$, while $\Sym^{r-1}(\mc{V}) \oo \mc{O}_{\PP}(-2,-2)$ decomposes as a direct sum of $\mc{O}_{\PP}(i,j)$ with $i,j<0$ and $i+j = (-r+1)-4 \geq -n+1$.

Next, the condition $i+j\geq -n+1$ implies that either $i\geq -n_1+1$ or $j\geq -n_2+1$, so at least one of $\mc{O}_{\bb{P}V_1}(i)$ or $\mc{O}_{\bb{P}V_2}(j)$ has no non-zero cohomology groups and so $\mc{O}_{\PP}(i,j)$ has no non-zero cohomology groups by K\"unneth's formula.
\end{proof}

In the next proof we will need the Buchsbaum--Rim complex, for which we recall the important details now. Let $X$ be a scheme and $\alpha \colon E \to F$ be a morphism of locally free sheaves on $X$ with $\rm{rank}(E)=e$ and $\rm{rank}(F)=f$ (and $e \ge f$). The Buchsbaum--Rim complex $B(\alpha)_\bullet$ of $\alpha$ has terms
\begin{align*}
  B(\alpha)_0 &= F,\\
  B(\alpha)_1 &= E,\\
  B(\alpha)_r &= \bw^{r+f-1} E \otimes \det (F^\vee) \otimes \D^{r-2} (F^\vee) \mbox{ for }r=2,\dots,e-f+1,
\end{align*}
and the differential $B(\alpha)_1 \to B(\alpha)_0$ is $\alpha$ (this is the complex $\mc{C}^1$ in \cite[\S A2.6]{eisenbud-book} where it is treated in the local setting -- the terms $\det(F^\vee)$, which are omitted there are necessary to globalize this construction). This is exact in positive degrees if the ideal sheaf of maximal minors of $\alpha$ has depth $\ge e-f+1$, with the convention that the unit ideal has infinite depth (exactness can be checked locally, in which case it follows from \cite[Theorem A2.10]{eisenbud-book}). In our application below, $X = \PP$ and $\alpha$ is surjective, and hence the ideal sheaf of maximal minors is the unit ideal.

\begin{proof}[Proof of Theorem~\ref{thm:vanishing-koszul}]
 The implication ``$\Leftarrow$" follows from the fact that $W(V,K)$ is generated in bi-degree $(0,0)$. To prove ``$\Rightarrow$", we first reduce to the case $m=2n-4$. As remarked earlier, the vanishing $W_{d,e}(V,K)=0$ for $d,e\gg 0$ is equivalent to $\mc{W}(V,K) = 0$, which is further equivalent to the fact that the linear space $H\subseteq \bb{P}(V_1\oo V_2)$ corresponding to $K^{\perp}$ is disjoint from $\op{Sec}(X)$. Assuming that this condition is satisfied, a generic choice of a linear space $H'\supseteq H$ of codimension $2n-4$ will still have the property that $H'\cap \op{Sec}(X) = \emptyset$, so it gives a bi-graded Koszul module $W(V,K')$ with $K'\subseteq K$ and $\dim(K')=2n-4$. The inclusion $K'\subseteq K$ induces a natural surjection $W(V,K') \onto W(V,K)$, so a vanishing for $W(V,K')$ will imply the corresponding vanishing for $W(V,K)$.
 
 We assume that $m=2n-4$ and let $\mc{G} = \ker(\beta)$ as in the proof of Proposition~\ref{prop:sheafW=0}. Since $\mc{W}(V,K)=0$, we have that the map $\a\colon K\oo\mc{O}_{\PP}\lra\mc{G}$ is surjective, so it gives an exact Buchsbaum--Rim complex  $\mc{B}_{\bullet}$ with 
 \begin{align*}
   \mc{B}_0&=\mc{G},\\
   \mc{B}_1 &= K\oo\mc{O}_{\PP},\\
   \mc{B}_r &= \bw^{n+r-2}K \oo \det\left(\mc{G}^{\vee}\right) \oo \D^{r-2}\left(\mc{G}^{\vee}\right)\mbox{ for }r=2,\dots,n-2
 \end{align*}
  The condition $W_{n_2-2,n_1-2}(V,K)=0$ is equivalent to the fact that after twisting by $\mc{O}_{\PP}(n_2-2,n_1-2)$, the induced map on global sections
\begin{equation}\label{eq:twist-H0-B}
\rH^0(\PP,\mc{B}_1(n_2-2,n_1-2)) \lra \rH^0(\PP,\mc{B}_0(n_2-2,n_1-2))
\end{equation}
is surjective. Since $\mc{B}_{\bullet}(n_2-2,n_1-2)$ is an exact complex, its hypercohomology groups are all zero. Using the hypercohomology spectral sequence, in order to prove the surjectivity of (\ref{eq:twist-H0-B}) it suffices to check that the sheaves $\mc{B}_{r}(n_2-2,n_1-2)$ have no cohomology (in fact, it is enough that $\rH^{r-1}(\PP,\mc{B}_{r}(n_2-2,n_1-2))=0$) for $r=2,\cdots,n-2$.
  
Since $0\leq r-2\leq n-4$, it follows from our hypothesis that $p=0$ or $p>r-2$, thus $\D^{r-2}(\mc{G}^{\vee}) = \Sym^{r-2}(\mc{G}^{\vee})$. Moreover, we have that $\det(\mc{G}^{\vee}) = \mc{O}_{\PP}(-n_2+1,-n_1+1)$, so
\[
  \mc{B}_{r}(n_2-2,n_1-2) = \bw^{n+r-2}K \oo \mc{O}_{\PP}(-1,-1) \oo \Sym^{r-2}\left(\mc{G}^{\vee}\right),\mbox{ for }r=2,\dots,n-2.
\]
The desired vanishing now follows from Lemma~\ref{lem:vanish-Sym-Gvee}.
\end{proof}

\begin{proof}[Proof of Theorem~\ref{thm:hilbert-function}]
 Using the projection argument from the proof of Theorem~\ref{thm:vanishing-koszul} it suffices to consider the case when $m=2n-4$ and show that we get an exact formula for $\dim(W_{d,e}(V,K))$ in the given range. Restricting (\ref{eqn:W}) to bi-degree $(d,e)$, we get a complex
\[ K\oo S_{d,e} \overset{\a_{d,e}}{\lra} V_1\oo S_{d,e+1} \oplus V_2\oo S_{d+1,e}  \overset{\b_{d,e}}{\lra}  S_{d+1,e+1}\]
whose middle homology is $W_{d,e}(V,K)$. We get that $\dim(W_{d,e}(V,K)) \geq \chi_{d,e}$, where
\[ \chi_{d,e} = \dim\left(V_1\oo S_{d,e+1} \oplus V_2\oo S_{d+1,e}\right) - \dim(S_{d+1,e+1}) - \dim(K\oo S_{d,e})\]
is the Euler characteristic of the above complex. Moreover, since $\b_{d,e}$ is surjective, we have that $\dim(W_{d,e}(V,K)) = \chi_{d,e}$ if and only if $\a_{d,e}$ is injective. A direct calculation shows that
\[ \chi_{d,e} = 2\cdot{d+n_1-1 \choose d}\cdot{e+n_2-1\choose e} \cdot \frac{{n_1-1\choose 2}\cdot \Delta_2 + {n_2-1\choose 2}\cdot \Delta_1 - (n-3)\cdot\Delta_1\cdot\Delta_2}{(d+1)\cdot(e+1)},\]
so to prove Theorem~\ref{thm:hilbert-function} it suffices to show that $\a_{d,e}$ is injective for $d\leq n_2-2$, $e\leq n_1-2$. Since $\a_{d,e}$ is a homogeneous component of a map of free modules, we have that if $\a_{d_0,e_0}$ is injective then $\a_{d,e}$ is also injective for all $d\leq d_0$ and $e\leq e_0$. It is then enough to prove that $\a_{n_2-2,n_1-2}$ is injective. Notice that for $d=n_2-2$ and $e=n_1-2$, we have $\Delta_1=\Delta_2=0$, so $\chi_{n_2-2,n_1-2}=0$. Moreover, we know by Theorem~\ref{thm:vanishing-koszul} that $W_{n_2-2,n_1-2}(V,K)=0$, $\a_{n_2-2,n_1-2}$ is injective.
\end{proof}

\section{Bi-graded Weyman modules}\label{sec:Weyman}

The fundamental connection described in \cite{AFPRW} between (standard graded) Koszul modules and syzygies goes through \defi{Weyman modules}. We define their analogues in the bi-graded setting, and show that they satisfy (in most characteristics) the hypothesis of Theorem~\ref{thm:vanishing-koszul}. 

For $i,j\geq 0$ we consider the surjective multiplication map
\[
  \mu_{u,v} \colon  \Sym^u U \oo \Sym^v U \lra \Sym^{u+v}U.
\]
The kernel of $\mu_{u,v}$ is naturally identified with $\Sym^{u-1} U \oo \Sym^{v-1} U$ via the inclusion
\begin{align*}
  \iota_{u,v} \colon  \Sym^{u-1} U \oo \Sym^{v-1} U &\to \Sym^u U \oo \Sym^v U,\\
  f\oo g &\mapsto f \oo xg - xf\oo g.
\end{align*}
More generally, for $t\leq u,v$ the composition $\iota^t_{u,v} = \iota_{u,v} \circ \iota_{u-1,v-1} \circ \cdots \circ \iota_{u-t+1,v-t+1}$ is given by
\[
  f\oo g \mapsto \sum_{i=0}^t (-1)^i{t\choose i} \cdot x^if\oo x^{t-i}g.
\]

We let
\[ Q_{u,v} = \coker(\iota_{u,v} \circ \iota_{u-1,v-1}),\]
which gives a short exact sequence
\[
  0\lra \Sym^{u-2} U \oo \Sym^{v-2} U \lra \Sym^u U \oo \Sym^v U \overset{\Psi_{u,v}}{\lra} Q_{u,v} \lra 0.
\]
In characteristic zero (or sufficiently large characteristic), one has an $\SL(U)$-equivariant decomposition $Q_{u,v} \simeq \Sym^{u+v}U \oplus  \Sym^{u+v-2}U$, but in general we only have an extension
\begin{equation}\label{eq:ses-Quv}
0\lra \Sym^{u+v-2}U \lra Q_{u,v} \lra \Sym^{u+v}U \lra 0.
\end{equation}

\begin{remark}
 If $\chr(\kk)=p>0$ then ${p \choose i}=0$ in $\kk$ for $0<i<p$, and thus for $u,v\geq p$ we have
 \[
   \iota^p_{u,v}(f\oo g) = f\oo x^p g - x^p f\oo g.
 \]
 Since $\op{Im}(\iota^p_{u,v}) \subset \op{Im}(\iota^2_{u,v}) = \ker(\Psi_{u,v})$, this shows that $\ker(\Psi_{u,v})$ contains rank two tensors. We will show that this is no longer the case when $p>\min(u,v)$.
\end{remark}

We let $V_i = (\Sym^{n_i-1}U)^{\vee} = \D^{n_i-1}U$, and let $K = Q_{n_1-1,n_2-1}^{\vee}$, considered as a subspace of $V_1\oo V_2$ via the inclusion $\Psi_{n_1-1,n_2-1}^{\vee}$. We note that 
\[ \dim(V_i)=n_i\mbox{ and }\dim(K) = 2n-4.\]
We define the \defi{bi-graded Weyman module $W^{(n_1-1,n_2-1)}:= W(V,K)$}.

\begin{proposition}\label{prop:ker-Psi}
 Let $p=\chr(\kk)$ and suppose that $p=0$ or that $p>\min(u,v)$. Then $\ker(\Psi_{u,v})$ contains no non-zero tensors of rank at most two.
\end{proposition}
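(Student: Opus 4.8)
The plan is to pass to an explicit polynomial model and reduce the statement to an elementary divisibility fact about binary forms.

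First, using the basis $\{1,x\}$ of $U$, I would identify $\Sym^\bullet U$ with the polynomial ring $\kk[t,x]$ (where $t$ denotes the basis vector $1$), so that $\Sym^a U\oo\Sym^b U$ becomes the space $R_{a,b}$ of polynomials in $\kk[t_1,x_1,t_2,x_2]$ homogeneous of degree $a$ in $(t_1,x_1)$ and of degree $b$ in $(t_2,x_2)$, with a pure tensor $f\oo g$ corresponding to the product $f(t_1,x_1)\,g(t_2,x_2)$. The crucial observation is that under this identification $\iota_{u,v}$ is multiplication by $D:=t_1x_2-x_1t_2$ (this amounts to reading the formula $f\oo g\mapsto f\oo xg-xf\oo g$ with the deficient tensor factors promoted to the correct degree by multiplying by the basis vector $1$). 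Hence $\iota^2_{u,v}\colon R_{u-2,v-2}\to R_{u,v}$ is multiplication by $D^2$, and the defining exact sequence of $Q_{u,v}$ gives
\[ \ker(\Psi_{u,v})=D^2\cdot R_{u-2,v-2}\subseteq R_{u,v}. \]
Since exchanging the two tensor factors preserves tensor rank, I may assume $u\le v$, so the hypothesis becomes $p=0$ or $p>u$, and it remains to show: \emph{every $\omega\in D^2\cdot R_{u-2,v-2}$ of tensor rank at most two vanishes}.

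Second, I would write $\omega=A_1(t_1,x_1)B_1(t_2,x_2)+A_2(t_1,x_1)B_2(t_2,x_2)$ with the number of nonzero terms minimal. If $\omega=AB$ with $A,B\ne0$ then $D\mid AB$, impossible because $D$ is irreducible and divides neither the factor $A\in\kk[t_1,x_1]$ nor $B\in\kk[t_2,x_2]$; so the minimal number of terms is two, and then $A_1,A_2$ are linearly independent. Writing $G:=\gcd(A_1,A_2)$ and $A_i=G\bar A_i$ with $\bar A_1,\bar A_2$ coprime of degree $r:=u-\deg G\ge1$, we get $\omega=G\cdot\bar\omega$ with $\bar\omega:=\bar A_1B_1+\bar A_2B_2\in R_{r,v}$, and since $D$ is irreducible with $D\nmid G$ we obtain $D^2\mid\bar\omega$. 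As $D$ lies in the ideal $(t_1-t_2,x_1-x_2)$, restricting $D\mid\bar\omega$ to the diagonal $t_1=t_2,\ x_1=x_2$ gives the identity $\bar A_1B_1+\bar A_2B_2=0$ in $\kk[t,x]$; coprimality forces $B_2=\bar A_1C$, $B_1=-\bar A_2C$ for some $C\in\kk[t,x]$, whence
\[ \bar\omega=C(t_2,x_2)\cdot T,\qquad T:=\bar A_2(t_1,x_1)\bar A_1(t_2,x_2)-\bar A_1(t_1,x_1)\bar A_2(t_2,x_2)\in R_{r,r}. \]
Because $D\nmid C$ (unless $C=0$, in which case $\omega=0$), the divisibility $D^2\mid\bar\omega$ reduces to $D^2\mid T$, which I claim cannot happen.

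Third, the heart of the matter is the lemma: \emph{if $F,G\in\kk[t,x]$ are coprime homogeneous forms of degree $r\ge1$ and $p=0$ or $p>r$, then $D^2\nmid F(t_1,x_1)G(t_2,x_2)-G(t_1,x_1)F(t_2,x_2)$}. Applied with $F=\bar A_2$, $G=\bar A_1$ (coprime of degree $r\le u$, so $p=0$ or $p>u\ge r$) this contradicts $D^2\mid T$ and completes the argument. To prove the lemma, observe that $T$ vanishes whenever $(t_2,x_2)$ is proportional to $(t_1,x_1)$, i.e.\ on $V(D)$, so $D\mid T$; write $T=D\,T_1$ with $T_1\in R_{r-1,r-1}$. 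Then $D^2\mid T$ is equivalent to $D\mid T_1$, which (since $D\in(t_1-t_2,x_1-x_2)$) forces $T_1$ to vanish on the diagonal. Differentiating $T=D\,T_1$ in $t_2$ and in $x_2$ and restricting to the diagonal — where $D$ vanishes while $\partial_{t_2}D$ and $\partial_{x_2}D$ restrict to $-x$ and $t$ — gives
\[ -x\cdot T_1|_{\mathrm{diag}}=F\,\partial_tG-G\,\partial_tF,\qquad t\cdot T_1|_{\mathrm{diag}}=F\,\partial_xG-G\,\partial_xF. \]
So $T_1|_{\mathrm{diag}}=0$ would force $F\,\partial_tG=G\,\partial_tF$ and $F\,\partial_xG=G\,\partial_xF$; coprimality then gives $F\mid\partial_tF$ and $F\mid\partial_xF$, and since these derivatives have strictly smaller degree, $\partial_tF=\partial_xF=0$, and likewise $\partial_tG=\partial_xG=0$. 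In characteristic $0$ this makes $F$ a nonzero constant, contradicting $\deg F=r\ge1$; in characteristic $p>0$ it makes $F$ a $p$-th power (as $\kk$ is perfect), forcing $p\mid r$ and contradicting $0<r<p$. Hence $T_1|_{\mathrm{diag}}\ne0$, so $D\nmid T_1$ and $D^2\nmid T$.

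The step I expect to be the main obstacle is this last lemma: the earlier reductions are essentially bookkeeping, but there one must extract from ``divisibility by $D^2$'' the vanishing of \emph{all} the first partial derivatives of $F$ and $G$, which is exactly what forces them to be $p$-th powers and locates the characteristic threshold $p>\min(u,v)$. The threshold is sharp, as foreshadowed by the preceding Remark: for $p\le\min(u,v)$ the element $\iota^p_{u,v}(f\oo g)=f\oo x^pg-x^pf\oo g\in\ker(\Psi_{u,v})$ equals $f(t_1,x_1)g(t_2,x_2)\cdot D^p$ in the polynomial model, hence has tensor rank at most two because $D^p=t_1^px_2^p-x_1^pt_2^p$ splits as the rank-two tensor $t^p\oo x^p-x^p\oo t^p$ in characteristic $p$, whereas $D^2$ has rank three in characteristic $0$.
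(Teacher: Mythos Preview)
Your proof is correct, and it follows a genuinely different route from the paper's.

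The paper argues directly in the monomial basis: writing the preimage of a putative rank-$\le 2$ tensor as $\sum_{i=\delta}^{\epsilon} x^i\otimes g_i\in\Sym^{u-2}U\otimes\Sym^{v-2}U$, it isolates the extremal terms $x^{\epsilon+2}\otimes g_\epsilon$ and $x^\delta\otimes x^2 g_\delta$ of the image, asserts that all remaining terms must cancel, and from the resulting recursion obtains $g_{\epsilon-i}=(i+1)x^ig_\epsilon$ together with the congruence $\epsilon-\delta+2\equiv 0\pmod p$, which contradicts $\epsilon-\delta\le u-2$ under the hypothesis on $p$. Your approach instead passes to the bihomogeneous polynomial model, where $\iota_{u,v}$ becomes multiplication by $D=t_1x_2-x_1t_2$ and $\ker\Psi_{u,v}=D^2\cdot R_{u-2,v-2}$; after ruling out rank one by irreducibility of $D$ and performing a gcd reduction on the first factor, you land on a clean Wronskian computation that forces all first partials of two coprime forms of degree $r\le u$ to vanish, hence makes them $p$-th powers. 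The paper's argument is compact and purely linear-algebraic, while yours is more structural: the unique factorization and Wronskian viewpoint makes the characteristic threshold completely transparent (it is exactly the smallest degree at which a nonconstant $p$-th power can occur), and it dovetails with the Remark preceding the proposition, since in your model $\iota^p_{u,v}(f\otimes g)=fg\cdot D^p$ and $D^p=t_1^p x_2^p-x_1^p t_2^p$ visibly has tensor rank two in characteristic $p$.
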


\begin{proof}
 We assume without loss of generality that $u \le v$, identify as usual $\Sym^d U$ with polynomials of degree $\leq d$ in $x$, and consider the derivation $\partial = \frac{\partial}{\partial x} \colon \kk(x)\lra\kk(x)$. We note that $\ker(\Psi_{u,v}) = \op{Im}(\iota^2_{u,v})$ is generated by elements of the form
 \[ \iota^2_{u,v}(x^a \oo x^b) = x^a \oo x^{b+2} - 2x^{a+1}\oo x^{b+1} + x^{a+2} \oo x^b,\]
 which are both in $\ker(\mu_{u,v})$ and in the kernel of the composition
 \[ 
 \xymatrix{
 \Sym^u U \oo \Sym^v U \ar[rr]^-{\partial \oo \op{id}}  \ar@/_2.0pc/[rrrr]_{\Phi} & &  \Sym^{u-1} U \oo \Sym^v U \ar[rr]^-{\mu_{u-1,v}} & & \Sym^{u+v-1}U \\
 }
 \]
Suppose now that $T\in\ker(\Psi_{u,v})$ is a non-zero tensor of rank at most $2$. If $T=f \oo g$, then $0=\mu_{u,v}(T)=fg$, which forces either $f=0$ or $g=0$, contradicting the fact that $T\neq 0$. We may therefore assume that
\[ T = f_1 \oo g_1 + f_2\oo g_2,\quad f_1,f_2\in\Sym^u U\text{ are not proportional, and }T\in\ker(\mu_{u,v})\cap\ker(\Phi).\]
Using the fact that $0 = \mu_{u,v}(T) = f_1g_1+f_2g_2$ we get
\[ \partial\left(\frac{f_1}{f_2}\right) = \frac{(\partial f_1)f_2 - (\partial f_2)f_1}{f_2^2} =  \left[(\partial f_1)g_1 + (\partial f_2)g_2\right]\frac{1}{f_2g_1} = \frac{\Phi(T)}{f_2g_1} = 0.\]
Since $\ker(\partial)=\kk(x^p)$, we conclude that $\frac{f_1}{f_2}\in\kk(x^p)$. By our hypothesis, we have that $p=0$ or $p>u$, which in turn forces $\frac{f_1}{f_2}\in\kk$, contradicting the fact that $f_1,f_2$ were not proportional.
\end{proof}

It follows from Proposition~\ref{prop:ker-Psi} that if $p=0$ or $p\geq\min(n_1,n_2)$ then Proposition~\ref{prop:sheafW=0} applies to the bi-graded Weyman module $W^{(n_1-1,n_2-1)}$. We get using Theorem~\ref{thm:vanishing-koszul} the following.

\begin{corollary}\label{cor:vanishing-Weyman}
 If $n_1,n_2\geq 2$ and $p=\chr(\kk)$ satisfies $p=0$ or $p\geq n_1+n_2-3$ then 
 \[ W^{(n_1-1,n_2-1)}_{d,e} = 0\mbox{ for }d\geq n_2-2,\ e\geq n_1-2.\]
\end{corollary}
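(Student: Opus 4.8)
The plan is to read the corollary off from the combination of Proposition~\ref{prop:ker-Psi}, Proposition~\ref{prop:sheafW=0} and Theorem~\ref{thm:vanishing-koszul}; all the substantive work has been carried out in those results, and what remains is to track the hypotheses through the chain of implications. The first step is to unwind the definition of the bi-graded Weyman module: by construction $K=Q_{n_1-1,n_2-1}^{\vee}$ is the image of the injection $\Psi_{n_1-1,n_2-1}^{\vee}\colon Q_{n_1-1,n_2-1}^{\vee}\to \D^{n_1-1}U\oo\D^{n_2-1}U = V_1\oo V_2$, so dualizing the surjection $\Psi_{n_1-1,n_2-1}$ identifies its orthogonal complement $K^{\perp}\subseteq (V_1\oo V_2)^{\vee}=\Sym^{n_1-1}U\oo\Sym^{n_2-1}U$ with $\ker(\Psi_{n_1-1,n_2-1})$, as subspaces of one and the same space.

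Next I would do the characteristic bookkeeping. Since $n_1,n_2\geq 2$, the hypothesis $p=0$ or $p\geq n_1+n_2-3$ forces $p=0$ or $p>\min(n_1-1,n_2-1)$; the only configuration in which $n_1+n_2-3<\min(n_1,n_2)$ is $n_1=n_2=2$, and there $Q_{1,1}=U\oo U$, $\ker(\Psi_{1,1})=0$, so the rank condition holds vacuously. Thus Proposition~\ref{prop:ker-Psi}, applied with $(u,v)=(n_1-1,n_2-1)$, shows that $K^{\perp}=\ker(\Psi_{n_1-1,n_2-1})$ contains no non-zero tensor of rank at most two. Proposition~\ref{prop:sheafW=0} then yields $\mc{W}(V,K)=0$, which by the discussion surrounding~(\ref{eq:def-sheaf-W}) is equivalent to $W_{d,e}(V,K)=0$ for $d,e\gg 0$.

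Finally, the same hypothesis gives $p=0$ or $p\geq n-3$ with $n=n_1+n_2$, so Theorem~\ref{thm:vanishing-koszul} applies, and its equivalence upgrades the asymptotic vanishing just obtained to the single statement $W_{n_2-2,n_1-2}(V,K)=0$. Since $W(V,K)=W^{(n_1-1,n_2-1)}$ is generated in bi-degree $(0,0)$, this one vanishing propagates to all bi-degrees $(d,e)$ with $d\geq n_2-2$ and $e\geq n_1-2$, which is exactly the assertion of the corollary. I do not expect a genuine obstacle here, since the two heavy inputs are already in place; the only points that deserve care are the identification $K^{\perp}=\ker(\Psi_{n_1-1,n_2-1})$ and the degenerate case $n_1=n_2=2$ in the characteristic count, both of which are routine.
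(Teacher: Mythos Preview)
Your proposal is correct and follows exactly the route the paper takes: identify $K^{\perp}$ with $\ker(\Psi_{n_1-1,n_2-1})$, invoke Proposition~\ref{prop:ker-Psi} to rule out low-rank tensors, feed this into Proposition~\ref{prop:sheafW=0} for the asymptotic vanishing, and then apply Theorem~\ref{thm:vanishing-koszul} together with generation in bi-degree $(0,0)$. In fact you are slightly more careful than the paper's one-line deduction, which only records the condition $p\geq\min(n_1,n_2)$ coming from Proposition~\ref{prop:ker-Psi}; your separate treatment of the degenerate case $n_1=n_2=2$ (where $\ker(\Psi_{1,1})=0$ vacuously) closes the small gap between that condition and the corollary's stated hypothesis $p\geq n_1+n_2-3$.
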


\section{Syzygies of K3 carpets}\label{sec:K3carpets}

Fix positive integers $a,b$. In this section we study the syzygies of the \defi{K3 carpet} $\mc{X}(a,b)$, obtained as a double structure on a rational normal scroll of type $(a,b)$. We show that via Hermite reciprocity, these syzygies can be built from components of bi-graded Weyman modules. Using Corollary~\ref{cor:vanishing-Weyman}, this yields a vanishing result for syzygies of K3 carpets that was conjectured by Eisenbud and Schreyer in \cite{ES-K3carpets}.

\subsection{Rational normal scrolls}

Let
\[
  \mc{S}(a,b)\subseteq\bb{P}(\Sym^a U \oplus \Sym^b U) \simeq \PP^{a+b+1}
\]
denote the rational normal scroll of type $(a,b)$. It is abstractly isomorphic to the projective bundle $\bb{P}_{\bb{P}U}(\mc{E})$, where $\mc{E} = \mc{E}_1 \oplus \mc{E}_2$, $\mc{E}_1=\mc{O}_{\bb{P}U}(a)$, $\mc{E}_2=\mc{O}_{\bb{P}U}(b)$. Let $B$ denote the homogeneous coordinate ring of the scroll, which is naturally bi-graded with
\[
  B_{d,e} = \rH^0(\bb{P}U,\Sym^d(\mc{E}_1)\oo \Sym^e(\mc{E}_2)) = \Sym^{da+eb}U.
\]
We let
\[
  R=\Sym(\Sym^aU \oplus \Sym^b U)
\]
denote the homogeneous coordinate ring of the ambient projective space, with its natural bi-grading. We have that $B=R/I$, where $I$ is the ideal of the scroll, generated by 
\[
  \bw^2 U \oo \bw^2(\Sym^{a-1}U \oplus \Sym^{b-1} U) \subset R_{1,1}.
\]
More explicitly, the multiplication map
\[
  U \otimes (\Sym^{a-1} U \oplus \Sym^{b-1} U) \to \Sym^a U \oplus \Sym^b U
\]
can be represented as a $2 \times (a+b)$ matrix whose entries are the linear forms in $R$, and $I$ is generated by the $2 \times 2$ minors of this matrix. In particular, it is resolved by an Eagon--Northcott complex and so
\begin{align} \label{eqn:scroll-tor}
  \Tor_i(B, \kk)_{i+1} = \D^{i-1} U \otimes \bw^{i+1}(\Sym^{a-1} U \oplus \Sym^{b-1} U) \qquad 1 \le i \le a+b-1.
\end{align}

The canonical module $\omega_B$ of $B$ is identified with $\rH^0(\bb{P}U,\omega_{\bb{P}U} \oo \det(\mc{E}) \oo \Sym(\mc{E}))$, with bi-grading
\[
  (\omega_B)_{d,e} = \rH^0(\bb{P}U,\omega_{\bb{P}U} \oo \det(\mc{E}) \oo \Sym^{d-1}(\mc{E}_1)\oo \Sym^{e-1}(\mc{E}_2)),
\]
and in particular it is generated in bi-degree $(1,1)$ by $(\omega_B)_{1,1} = \Sym^{a+b-2}U$. Dualizing \eqref{eqn:scroll-tor} and taking into account the bi-grading gives (with $u+v=i$)
\begin{align} \label{eqn:canonical-tor}
  \Tor_i^R(\omega_B, \kk)_{u+1,v+1} = \Sym^{a+b-2-u-v} U \oo \bw^u(\Sym^{a-1} U) \oo \bw^v(\Sym^{b-1} U).
\end{align}

We have a surjective map $\phi\colon I \lra \omega_B$, which at the level of generators is given by a map
\[
  \xymatrix{
    I_{2,0} \oplus I_{1,1} \oplus I_{0,2} \ar@{=}[r] & \bw^2(\Sym^{a-1}U) \oplus (\Sym^{a-1}U\oo \Sym^{b-1}U) \oplus \bw^2(\Sym^{b-1}U)\ar[d]^{\phi_{1,1}} \\
    (\omega_B)_{1,1}\ar@{=}[r] & \Sym^{a+b-2}U
    }
\]
where $\phi_{1,1}$ sends $\bw^2(\Sym^{a-1}U)$ and $\bw^2(\Sym^{b-1}U)$ to zero, and it is described on $\Sym^{a-1}U\oo \Sym^{b-1}U$ by the natural multiplication map. We let $A$ denote the coordinate ring of the associated \defi{K3 carpet $\mc{X}(a,b)$}, which is obtained as an $R$-module extension
\begin{equation}\label{eq:ses-omega-A-B}
0 \lra \omega_B \lra A \lra B \lra 0
\end{equation}
induced by $\phi \in \Hom_R(I,\omega_B) = \Ext^1_R(B,\omega_B)$. 

For the next result, we collapse the bi-grading on $A$ to a single grading by $A_n = \bigoplus_{i+j=n} A_{i,j}$.

\begin{proposition} \label{prop:A-hilbert}
  The Hilbert series of $A$ is
  \[
    \sum_{n \ge 0} (\dim A_n) t^n = \frac{1 + (a+b-1)t + (a+b-1)t^2 + t^3}{(1-t)^3}.
  \]  
\end{proposition}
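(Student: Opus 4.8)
The plan is to split the computation along the defining short exact sequence \eqref{eq:ses-omega-A-B}. Since $A$ is constructed as an extension of bi-graded $R$-modules via a class $\phi\in\Ext^1_R(B,\omega_B)$ which is homogeneous of bi-degree $(0,0)$ (witnessed by $\phi_{1,1}$, a map between the bi-degree-$(1,1)$ components), the sequence $0\to\omega_B\to A\to B\to 0$ is degree-preserving. Hence $\dim A_{i,j}=\dim(\omega_B)_{i,j}+\dim B_{i,j}$ for all $(i,j)$, and after collapsing to the single grading $M_n=\bigoplus_{i+j=n}M_{i,j}$ the Hilbert series are additive:
\[
  \sum_{n\ge 0}(\dim A_n)\,t^n \;=\; \sum_{n\ge 0}(\dim B_n)\,t^n \;+\; \sum_{n\ge 0}\big(\dim(\omega_B)_n\big)\,t^n .
\]
So it suffices to compute the (collapsed) Hilbert series of $B$ and of $\omega_B$ separately.

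For $B$, I would use that $B_{d,e}=\Sym^{da+eb}U$ has dimension $da+eb+1$ for all $d,e\ge 0$, so the collapsed Hilbert series is $\sum_{d,e\ge 0}(da+eb+1)\,t^{d+e}$. Splitting this into the three elementary sums $\sum_{d,e\ge 0}t^{d+e}=(1-t)^{-2}$ and $\sum_{d,e\ge 0}d\,t^{d+e}=\sum_{d,e\ge 0}e\,t^{d+e}=t(1-t)^{-3}$ gives
\[
  \sum_{n\ge0}(\dim B_n)\,t^n=\frac{(a+b)t}{(1-t)^3}+\frac{1}{(1-t)^2}=\frac{1+(a+b-1)t}{(1-t)^3}.
\]
(Alternatively, this is the standard Hilbert series of the variety of minimal degree $\mc{S}(a,b)\subset\PP^{a+b+1}$, obtainable from the Eagon--Northcott Betti numbers in \eqref{eqn:scroll-tor}; I would keep the direct computation since it is shorter.) For $\omega_B$, since $\omega_{\bb{P}U}\oo\det(\mc{E})\oo\Sym^{d-1}(\mc{E}_1)\oo\Sym^{e-1}(\mc{E}_2)\cong\mc{O}_{\bb{P}U}(ad+be-2)$, one has $(\omega_B)_{d,e}=\Sym^{ad+be-2}U$ for $d,e\ge 1$ and $(\omega_B)_{d,e}=0$ whenever $d=0$ or $e=0$. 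Because $a,b\ge 1$, for $d,e\ge 1$ we always have $ad+be\ge 2$, so $\dim(\omega_B)_{d,e}=ad+be-1$ with no low-degree exceptions, and the collapsed Hilbert series is $\sum_{d,e\ge 1}(ad+be-1)\,t^{d+e}$. The same splitting, now over $d,e\ge 1$ (where $\sum t^{d+e}=t^2(1-t)^{-2}$ and $\sum d\,t^{d+e}=t^2(1-t)^{-3}$), yields
\[
  \sum_{n\ge0}\big(\dim(\omega_B)_n\big)\,t^n=\frac{(a+b)t^2}{(1-t)^3}-\frac{t^2}{(1-t)^2}=\frac{(a+b-1)t^2+t^3}{(1-t)^3}.
\]
Adding the two contributions gives exactly $\dfrac{1+(a+b-1)t+(a+b-1)t^2+t^3}{(1-t)^3}$.

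I do not expect a real obstacle here: once the setup is in place the proof is a routine manipulation of rational generating functions. The only two points that need a moment's care are (i) verifying that the extension \eqref{eq:ses-omega-A-B} is bi-degree-preserving, so that Hilbert series genuinely add, and (ii) checking that on the support $d,e\ge 1$ of $\omega_B$ the dimension formula $\dim(\omega_B)_{d,e}=ad+be-1$ holds without exceptions (so that no correction terms appear in the generating function for small degrees). Both are immediate from $a,b\ge 1$ and the description of $\phi_{1,1}$.
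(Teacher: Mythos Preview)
Your proof is correct and follows essentially the same approach as the paper: both use the short exact sequence \eqref{eq:ses-omega-A-B} to write the Hilbert series of $A$ as the sum of those of $B$ and $\omega_B$, and both compute each piece from the explicit bi-graded dimensions. The only difference is cosmetic---the paper first sums over $i+j=n$ to obtain closed formulas for $\dim B_n$ and $\dim(\omega_B)_n$ and then forms the generating function, whereas you sum the double series directly via standard identities; the two computations are equivalent.
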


\begin{proof}
  The Hilbert series of $A$ is a sum of the Hilbert series of $B$ and $\omega_B$, so we calculate each separately. We have $B_{i,j} = \Sym^{ia+jb} U$, so $\dim B_{i,j} = ia+jb+1$, and hence
  \[
    \dim B_n = \sum_{i=0}^n (ia+(n-i)b+1) = \frac{n(n+1)}{2}(a+b) + n+1.
  \]
  Next, we have $(\omega_B)_{i,j} = \Sym^{ia+jb-2} U$ for $i,j \ge 1$ and 0 otherwise, and in particular,
  \[
    \dim (\omega_B)_n = \sum_{i=1}^{n-1} (ia+(n-i)b-1) = \frac{n(n-1)}{2}(a+b) - n+1 \qquad (n \ge 1).
  \]
  So
  \[
    \dim A_0 = 1, \qquad \dim A_n = n^2(a+b) + 2 \qquad (n \ge 1),
  \]
  and
  \[
    \sum_{n \ge 0} (\dim A_n) t^n = \frac{(a+b)t(t+1)}{(1-t)^3} + \frac{2}{1-t} - 1 = \frac{1+(a+b-1)t + (a+b-1)t^2 + t^3}{(1-t)^3}. \qedhere
  \]
\end{proof}

\subsection{The main result}

One has that $A$ is Gorenstein with Castelnuovo--Mumford regularity $3$, and it is conjectured in \cite{ES-K3carpets} that 
\[
  \Tor_i^R(A,\kk)_{i+2}=0\mbox{ for }i<\min(a,b),
\]
provided that $p=\chr(\kk)$ satisfies $p=0$ or $p\geq\min(a,b)$. We prove this conjecture as a consequence of our basic results on bi-graded Koszul modules. More precisely, we show the following.

\begin{theorem}\label{thm:Ki2=Weyman}
 Consider non-negative integers $u,v\geq 0$ with $u+v=i$. We have that 
 \[
   \Tor_i^R(A,\kk)_{u+1,v+1} \simeq W^{(u+1,v+1)}_{a-1-u,b-1-v}.
 \]
 In particular, if $p=\chr(\kk)$ satisfies $p=0$ or $p\geq\min(a,b)$ and if $i<\min(a,b)$ then 
 \[
   \Tor_i^R(A,\kk)_{i+2}=0.
 \]
\end{theorem}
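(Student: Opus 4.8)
The plan is to realize $\Tor_i^R(A,\kk)_{u+1,v+1}$ as the cokernel of the connecting map of the extension \eqref{eq:ses-omega-A-B}, to identify that cokernel through Hermite reciprocity with a bi-graded component of the Weyman module $W^{(u+1,v+1)}$, and then to invoke Corollary~\ref{cor:vanishing-Weyman}. Concretely, I would first apply $\Tor_\bullet^R(-,\kk)$ to \eqref{eq:ses-omega-A-B}. Since $B$ is resolved by an Eagon--Northcott complex, $\Tor_j^R(B,\kk)$ is concentrated in internal degree $j+1$ for $j\ge 1$ by \eqref{eqn:scroll-tor}; dually $\Tor_j^R(\omega_B,\kk)$ is concentrated in internal degree $j+2$, the only occurring bi-degrees being $(u+1,v+1)$ with $u+v=j$, $u,v\ge 0$, by \eqref{eqn:canonical-tor}. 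Restricting the long exact sequence to bi-degree $(u+1,v+1)$ with $u+v=i$, the outgoing term $\Tor_i^R(B,\kk)_{u+1,v+1}$ vanishes, so $\Tor_i^R(A,\kk)_{u+1,v+1}=\coker\bigl(\partial_{u,v}\colon\Tor_{i+1}^R(B,\kk)_{u+1,v+1}\lra\Tor_i^R(\omega_B,\kk)_{u+1,v+1}\bigr)$; and since $\Tor_i^R(B,\kk)$ and $\Tor_i^R(\omega_B,\kk)$ both vanish in the edge bi-degrees $(i+2,0)$ and $(0,i+2)$, we get $\Tor_i^R(A,\kk)_{i+2}=\bigoplus_{u+v=i}\Tor_i^R(A,\kk)_{u+1,v+1}$. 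The map $\partial_{u,v}$ is induced by the extension class $\phi$, i.e.\ by the multiplication $\Sym^{a-1}U\oo\Sym^{b-1}U\to\Sym^{a+b-2}U$ on generators; feeding this through the Eagon--Northcott differentials presents $\partial_{u,v}$ as the explicit map $\D^i U\oo\bw^{u+1}(\Sym^{a-1}U)\oo\bw^{v+1}(\Sym^{b-1}U)\to\Sym^{a+b-2-i}U\oo\bw^u(\Sym^{a-1}U)\oo\bw^v(\Sym^{b-1}U)$.

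The crux is the second step: transporting this map through Hermite reciprocity. With $S=\Sym(\D^{u+1}U\oplus\D^{v+1}U)$, $S_{1,0}=\D^{u+1}U$, $S_{0,1}=\D^{v+1}U$, Hermite reciprocity identifies $\bw^{u+1}(\Sym^{a-1}U)\oo\bw^{v+1}(\Sym^{b-1}U)$ with $S_{a-1-u,b-1-v}$, so the domain of $\partial_{u,v}$ becomes $\D^i U\oo S_{a-1-u,b-1-v}$, and, as $a,b$ vary, these assemble into the free module $\D^i U\oo S$. The delicate point is to equip the assembled target $\bigoplus_{a,b}\Tor_i^R(\omega_B,\kk)_{u+1,v+1}$ with a finitely generated $S$-module structure under which it becomes the middle homology of the complex $\D^{i+2}U\oo S\to(\D^{u+1}U\oplus\D^{v+1}U)\oo S\to S$; this is where Proposition~\ref{prop:comult-nu}, the compatibility of the Hermite multiplication $\nu$ with comultiplication, is the essential tool, and where one must keep the extension $Q_{u,v}$ non-split, since the splitting $Q_{u,v}\simeq\Sym^{u+v}U\oplus\Sym^{u+v-2}U$ is unavailable in small characteristic. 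Since $K=Q^\vee_{u+1,v+1}$ sits in $0\to\D^{i+2}U\to K\to\D^i U\to 0$, the inclusion $\D^{i+2}U\oo S\hookrightarrow K\oo S$ exhibits the above complex as a subcomplex of the Weyman complex $K\oo S\xrightarrow{\delta_2}(\D^{u+1}U\oplus\D^{v+1}U)\oo S\xrightarrow{\delta_1}S$, and one checks that on middle homology $\partial_{u,v}$ is identified with the canonical map $\D^i U\oo S\to\op{im}(\delta_2)/\op{im}(\delta_2|_{\D^{i+2}U\oo S})$; hence $\coker(\partial_{u,v})=\ker(\delta_1)/\op{im}(\delta_2)=W^{(u+1,v+1)}$ in bi-degree $(a-1-u,b-1-v)$, which is the asserted isomorphism.

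For the vanishing, I would apply Corollary~\ref{cor:vanishing-Weyman} with $n_1=u+2$, $n_2=v+2$, so that $n_1+n_2-3=i+1$: if $p=\chr(\kk)$ satisfies $p=0$ or $p\ge i+1$, then $W^{(u+1,v+1)}_{d,e}=0$ for $d\ge v$ and $e\ge u$. When $i<\min(a,b)$ one has $a-1-u\ge v$ and $b-1-v\ge u$, and the hypothesis $p=0$ or $p\ge\min(a,b)$ forces $p=0$ or $p\ge i+1$; hence $W^{(u+1,v+1)}_{a-1-u,b-1-v}=0$ for every decomposition $u+v=i$ with $u,v\ge 0$, and therefore $\Tor_i^R(A,\kk)_{i+2}=0$. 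The first and last steps are routine homological algebra; all the weight of the proof is in the middle step — producing the $S$-module structure on the target of $\partial_{u,v}$ and matching $\partial_{u,v}$ itself with the Weyman differential. I expect this to be the main obstacle: it is a careful, characteristic-sensitive unwinding of Hermite reciprocity across the Eagon--Northcott and Buchsbaum--Rim style identities, carrying the non-split extension $K$ throughout rather than its characteristic-zero splitting, with Proposition~\ref{prop:comult-nu} as the compatibility that makes the identification go through.
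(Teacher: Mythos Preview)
Your proposal is correct and follows essentially the same route as the paper: reduce via the long exact sequence of \eqref{eq:ses-omega-A-B} to the cokernel of the connecting map, identify the source via Hermite reciprocity with $\D^iU\otimes S_{a-1-u,b-1-v}$ (Proposition~\ref{prop:scroll-free}), identify the target with $W(V,\D^{i+2}U)_{a-1-u,b-1-v}$ (Proposition~\ref{prop:omega-koszul}), and then recognize the map itself as the one in \eqref{eq:map-Duv-WVK} so that its cokernel is the Weyman module; the vanishing argument via Corollary~\ref{cor:vanishing-Weyman} with $n_1=u+2$, $n_2=v+2$ is exactly the paper's. The paper implements your ``crux'' step by packaging, for each symbol $Z\in\{\ms{A},\ms{B},\ms{C},\ms{D},\ms{S},\ms{N},\ms{M}\}$, the bi-graded pieces $Z(u,v)_{a-u,b-v}$ into a double complex $\Phi(Z)$ of free $R$-modules, computing their homology geometrically via Weyman's pushforward technique, and then checking (Proposition~\ref{prop:complex-diagram}, using Proposition~\ref{prop:comult-nu} precisely where you anticipate) that the maps in \eqref{eqn:complexes} assemble into morphisms of double complexes; this lets one verify that $\Phi(\ms{N})\to\Phi(\ms{M})$ genuinely lifts $I\twoheadrightarrow\omega_B$ and hence computes the connecting homomorphism on $\Tor$. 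One small slip: the canonical map lands in $\ker(\delta_1)/\op{im}(\delta_2|_{\D^{i+2}U\oo S})$, not in $\op{im}(\delta_2)/\op{im}(\delta_2|_{\D^{i+2}U\oo S})$; its image is the latter, which is why the cokernel is $\ker(\delta_1)/\op{im}(\delta_2)=W^{(u+1,v+1)}$.
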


To prove the first part of the theorem, we note that (\ref{eq:ses-omega-A-B}) induces an exact sequence
\[\cdots \lra \Tor_{i+1}^R(B,\kk)_{i+2} \lra \Tor_i^R(\omega_B,\kk)_{i+2} \lra \Tor_i^R(A,\kk)_{i+2} \lra \Tor_i^R(B,\kk)_{i+2} \lra \cdots\]
Since $\Tor_i^R(B,\kk)_{i+2}=0$ for all $i$, and $\Tor_{i+1}^R(B,\kk)_{i+2}=\Tor_i^R(I,\kk)_{i+2}$, it follows that
\[\Tor_i^R(A,\kk)_{i+2} = \coker\left(\Tor_i^R(I,\kk)_{i+2} \lra \Tor_i^R(\omega_B,\kk)_{i+2}\right),\]
where the maps are induced by the surjection $I\onto \omega_B$ described earlier.

  \begin{proposition} \label{prop:scroll-free}
    We have
    \[
      \Tor_i^R(I,\kk)_{u+1,v+1} = \D^{u+v}U \oo \Sym^{a-1-u}(\D^{u+1}U) \oo \Sym^{b-1-v}(\D^{v+1}U).
      \]
  \end{proposition}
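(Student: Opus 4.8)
The plan is to compute $\Tor_i^R(I, \kk)$ from the Eagon--Northcott resolution of $B = R/I$, keeping careful track of the bi-grading, and then to apply Hermite reciprocity to each bi-graded strand. Recall from \eqref{eqn:scroll-tor} that $\Tor_i^R(B, \kk)_{i+1} = \D^{i-1}U \otimes \bw^{i+1}(\Sym^{a-1}U \oplus \Sym^{b-1}U)$, and since $\Tor_i^R(I,\kk)_{n} = \Tor_{i+1}^R(B,\kk)_{n}$, the module we want is $\D^i U \otimes \bw^{i+2}(\Sym^{a-1}U \oplus \Sym^{b-1}U)$ in total degree $i+2$. First I would split the exterior power using \eqref{eq:wedge-sum}: with $i = u+v$, the bi-degree $(u+1, v+1)$ component of $\bw^{i+2}(\Sym^{a-1}U \oplus \Sym^{b-1}U)$ is $\bw^{u+1}(\Sym^{a-1}U) \otimes \bw^{v+1}(\Sym^{b-1}U)$ — this uses that a wedge of elements of $\Sym^{a-1}U$ contributes bi-degree $(1,0)$ per factor and a wedge of elements of $\Sym^{b-1}U$ contributes $(0,1)$ per factor, exactly as in the derivation of \eqref{eqn:canonical-tor}. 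So we obtain
\[
  \Tor_i^R(I,\kk)_{u+1,v+1} = \D^{u+v}U \otimes \bw^{u+1}(\Sym^{a-1}U) \otimes \bw^{v+1}(\Sym^{b-1}U).
\]

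The remaining step is to identify $\bw^{u+1}(\Sym^{a-1}U)$ with $\Sym^{a-1-u}(\D^{u+1}U)$ and likewise for the other factor. This is precisely Hermite reciprocity as recalled in Section~\ref{sec:prelim}: the $\SL(U)$-equivariant isomorphism $\Sym^{d}(\D^{i}U) \cong \bw^{i}(\Sym^{d+i-1}U)$ specializes, with $i = u+1$ and $d = a-1-u$, to $\Sym^{a-1-u}(\D^{u+1}U) \cong \bw^{u+1}(\Sym^{(a-1-u)+(u+1)-1}U) = \bw^{u+1}(\Sym^{a-1}U)$, and similarly $\Sym^{b-1-v}(\D^{v+1}U) \cong \bw^{v+1}(\Sym^{b-1}U)$. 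Substituting these two identifications into the displayed formula yields exactly the claimed description of $\Tor_i^R(I,\kk)_{u+1,v+1}$.

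I don't anticipate a serious obstacle here: the content is really just bookkeeping of the bi-grading on the Eagon--Northcott complex together with an invocation of Hermite reciprocity, both of which are already set up earlier in the paper. The one point that deserves a line of care is the degree shift $\Tor_i^R(I,\kk)_{n} = \Tor_{i+1}^R(B,\kk)_{n}$ coming from the short exact sequence $0 \to I \to R \to B \to 0$ (valid since $R$ is free, so $\Tor_{\ge 1}^R(R,\kk)=0$), and the fact that the Eagon--Northcott differentials are linear so that the degree-$(i+1)$ part of $\Tor_i^R(B,\kk)$ really is the whole of $\Tor_i^R(B,\kk)$ in the relevant range $1 \le i \le a+b-1$. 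After that, matching the bi-degrees against \eqref{eq:wedge-sum} and applying Hermite reciprocity termwise finishes the proof.
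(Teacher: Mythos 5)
Your proposal is correct and follows essentially the same route as the paper: extract the bi-degree $(u+1,v+1)$ strand of the Eagon--Northcott Tor from \eqref{eqn:scroll-tor} to get $\D^{u+v}U \oo \bw^{u+1}(\Sym^{a-1}U) \oo \bw^{v+1}(\Sym^{b-1}U)$, then convert each exterior-power factor via Hermite reciprocity. The paper's proof is just a more terse version of exactly this, so there is nothing to flag.
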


\begin{proof}
Using \eqref{eqn:scroll-tor}, we have
\[
  \Tor_i^R(I,\kk)_{u+1,v+1} = \D^{u+v}U \oo \bw^{u+1}(\Sym^{a-1}U) \oo \bw^{v+1}(\Sym^{b-1}U)
\]
so the identification follows abstractly from Hermite reciprocity.
\end{proof}

Define
\begin{align*}
  \ms{S}(u,v) &= \Sym(\D^{u+1} U \oplus \D^{v+1} U)\\ 
  \ms{M}(u,v) &= \bigoplus_{d+e \ge 2} \Sym^{d+e-2} U \otimes \Sym^d (\D^uU) \otimes \Sym^e(\D^vU).
\end{align*}
For simplicity, we will also write $S$ for $\ms{S}(u,v)$.
Both have bi-gradings via:
\begin{align*}
  S_{d,e} &= \Sym^d(\D^{u+1} U) \oo \Sym^e(\D^{v+1} U)\\
  \ms{M}(u,v)_{d,e} &= \Sym^{d+e-2} U \oo \Sym^d(\D^uU) \oo \Sym^e(\D^vU).
\end{align*}
We will see in the proof of the next result that $\ms{M}(u,v)$ can be given the structure of a finitely generated $S$-module.

\begin{proposition} \label{prop:omega-koszul}
$\Tor_i^R(\omega_B,\kk)_{u+1,v+1} = W_{a-1-u,b-1-v}(V,K)$, where $V = \D^{u+1}U\oplus \D^{v+1}U$, and $K=\D^{u+v+2}U$.
\end{proposition}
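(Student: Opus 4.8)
The plan is to split the statement into a vector-space computation and a module-theoretic upgrade. For the vector-space part, apply Hermite reciprocity to the two exterior powers in \eqref{eqn:canonical-tor}: since $\bw^u(\Sym^{a-1}U) = \Sym^{a-u}(\D^u U)$ and $\bw^v(\Sym^{b-1}U) = \Sym^{b-v}(\D^v U)$, one gets
\[
  \Tor_i^R(\omega_B,\kk)_{u+1,v+1} = \Sym^{a+b-2-u-v}U\oo\Sym^{a-u}(\D^u U)\oo\Sym^{b-v}(\D^v U) = \ms{M}(u,v)_{a-u,b-v}.
\]
Letting $a\geq u$ and $b\geq v$ vary, these Tor groups then assemble into the bi-graded vector space $\ms{M}(u,v)$, the summand $\ms{M}(u,v)_{d,e}$ corresponding to the scroll of type $(d+u,e+v)$.

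Next I would endow $\ms{M}(u,v)$ with the structure of a finitely generated bi-graded $S$-module, $S = \ms{S}(u,v) = \Sym(\D^{u+1}U\oplus\D^{v+1}U)$: let $S_{1,0} = \D^{u+1}U$ act on $\ms{M}(u,v)_{d,e}$ by first applying the comultiplication $\Delta_{u,1}\colon\D^{u+1}U\to\D^u U\oo U$, then multiplying the $\D^u U$ factor into $\Sym^d(\D^u U)$ and the $U$ factor into $\Sym^{d+e-2}U$, and symmetrically for $S_{0,1}=\D^{v+1}U$. Coassociativity of divided-power comultiplication together with associativity of multiplication makes the two actions commute, so this is a well-defined module structure, generated over $S$ by $\ms{M}(u,v)_{1,1} = \D^u U\oo\D^v U$; and it is this action with respect to which the relevant connecting maps will later turn out to be $S$-linear.

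The core of the proof is to identify this $S$-module with the Koszul module $W(V,K)$, $V = \D^{u+1}U\oplus\D^{v+1}U$, $K = \D^{u+v+2}U$, where $K$ is embedded in $V_1\oo V_2$ via the comultiplication $\Delta_{u+1,v+1}$ (injective, since its dual $\mu_{u+1,v+1}$ is surjective, so no hypothesis on $\chr(\kk)$ is needed here) --- equivalently, to realize $\ms{M}(u,v)$, with the grading that places its generators in bi-degree $(0,0)$, as the middle homology of $K\oo S\to V\oo S\to S$. I would do this by rewriting each term of that complex in bi-degree $(a-u,b-v)$ via Hermite reciprocity: $S_{d,e}$ becomes $\bw^{u+1}(\Sym^{d+u}U)\oo\bw^{v+1}(\Sym^{e+v}U)$, the differential $V\oo S\to S$ becomes $\nu\oo 1$ on the $V_1$-part and $1\oo\nu$ on the $V_2$-part, and the restricted Koszul differential $K\oo S\to V\oo S$ becomes $\Delta_{u+1,v+1}$ on the $\D^{u+v+2}U$ factor followed by $\nu$ on one exterior factor in each of the two summands. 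Under Hermite reciprocity $\nu$ is just multiplication in $\Sym(\D^{u+1}U)$ (resp.\ in $\Sym(\D^{v+1}U)$), hence surjective, so the right homology vanishes and the middle homology equals $\ker\partial_1/\operatorname{im}\partial_2$; one then computes this kernel and image factor by factor and matches the outcome with $\Sym^{a+b-2-u-v}U\oo\Sym^{a-u}(\D^u U)\oo\Sym^{b-v}(\D^v U)$. The bookkeeping of $\nu$ against exterior and divided-power comultiplications is governed by Proposition~\ref{prop:comult-nu}.

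The main obstacle is exactly this last identification: a priori the target of the connecting map carries no $S$-module structure at all, and verifying both that the prescribed action is the correct one and that the three-term complex computes $\ms{M}(u,v)$ on the nose is the subtle point advertised in the introduction --- it amounts to tracking several layers of comultiplication against $\nu$ and invoking Proposition~\ref{prop:comult-nu} at each step. Granting it, the Proposition follows by restricting to bi-degree $(a-u,b-v)$, which gives $W_{a-1-u,b-1-v}(V,K) = \ms{M}(u,v)_{a-u,b-v} = \Tor_i^R(\omega_B,\kk)_{u+1,v+1}$.
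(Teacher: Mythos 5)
Your plan departs from the paper's in a meaningful way. The paper obtains both the $S$-module structure on $\ms{M}(u,v)$ and the three-term complex computing it in a single stroke, geometrically: it writes down the short exact sequence of vector bundles
\[
0\to\mc{O}(-u-1)\oplus\mc{O}(-v-1)\to\bigl(\D^{u+1}U\oplus\D^{v+1}U\bigr)\oo\mc{O}_{\bb{P}U}\to(\D^u U)(1)\oplus(\D^v U)(1)\to 0
\]
over $\bb{P}U$, and applies Weyman's technique of deriving pushforwards of Koszul complexes on subbundles. This produces, automatically and with $\SL(U)$-equivariant differentials, the minimal complex ${\bf F}_\bullet$ with ${\bf F}_1 = \D^{u+v+2}U\oo S(-1,-1)$, ${\bf F}_0 = \D^{u+1}U\oo S(-1,0)\oplus\D^{v+1}U\oo S(0,-1)$, ${\bf F}_{-1}=S$, and homology $\ms{M}(u,v)$ and $\kk$; equivariance then forces the differentials to be the Koszul ones, and the identification with $W(V,K)$ falls out. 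That is precisely the ``miraculous'' structure advertised in the introduction, and the whole point of the geometric detour is to bypass the explicit verification you are attempting. You instead propose to define the $S$-module action on $\ms{M}(u,v)$ by hand via divided-power comultiplication, and then to verify directly, term by term through Hermite reciprocity, that the middle homology of $K\oo S\to V\oo S\to S$ recovers $\ms{M}(u,v)$.

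Your steps (1) and (2) are fine: the Hermite-reciprocity identification of the $\Tor$ groups coincides with the paper's, and the action you describe --- comultiply $\D^{u+1}U\to\D^uU\oo U$ and multiply the two pieces into $\Sym^d(\D^uU)$ and $\Sym^{d+e-2}U$ respectively --- does agree with the action produced by the vector bundle sequence. The problem is step (3), which carries the entire content of the Proposition and which you explicitly defer: you write ``Granting it, the Proposition follows.'' Matching the middle homology of the Koszul complex against $\Sym^{a+b-2-u-v}U\oo\Sym^{a-u}(\D^u U)\oo\Sym^{b-v}(\D^v U)$ in every bi-degree is not mere bookkeeping of $\nu$ against comultiplications and a few appeals to Proposition~\ref{prop:comult-nu}; it requires actually computing a kernel and an image of explicit equivariant maps and then identifying the quotient, which is substantially harder than what you sketch, and you do not carry it out. (There is also a minor slip: the cokernel of $V\oo S\to S$ is $\kk$ rather than $0$, though this does not affect the middle homology, which is always $\ker\delta_1/\operatorname{im}\delta_2$ by definition.) As written, your proposal is a faithful description of what a direct computational proof would have to accomplish, but the essential identification is missing, so there is a genuine gap.
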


\begin{proof}
Apply Hermite reciprocity to \eqref{eqn:canonical-tor} to get
  \[
    \Tor_i^R(\omega_B, \kk)_{u+1,v+1} = \Sym^{a+b-2-u-v} U \oo \Sym^{a-u}(\D^u U) \oo \Sym^{b-v}(\D^vU).
  \]
We have a short exact sequence of vector bundles over $\bb{P}(U)$:
\[
  0 \to  \mc{O}(-u-1) \oplus \mc{O}(-v-1) \to \D^{u+1} U \oplus \D^{v+1} U \to (\D^u U)(1) \oplus (\D^v U)(1) \to 0
\]
Using \cite[\S 5]{weyman}, we have a minimal complex ${\bf F}_\bullet$ with terms
\[
  {\bf F}_i = \bigoplus_{j \ge 0} \rH^j(\bb{P}U, \bw^{i+j}( \mc{O}(-u-1) \oplus \mc{O}(-v-1)) \oo \mc{O}(-2)) \otimes S(-i-j)
\]
whose homology is 
\begin{align*}
  \rH_0({\bf F}_\bullet) &= \rH^0(\bb{P}(U); \Sym((\D^u U)(1) \oplus (\D^v U)(1)) \otimes \mc{O}(-2)) = \ms{M}(u,v)\\
  \rH_{-1}({\bf F}_\bullet) &= \rH^1(\bb{P}(U); \Sym((\D^u U)(1) \oplus (\D^v U)(1)) \otimes \mc{O}(-2)) = \kk.
\end{align*}
Here we treat the terms as singly-graded modules, though it can be made bi-graded by setting $\deg(\D^{u+1}U)=(1,0)$ and $\deg(\D^{v+1}U)=(0,1)$. Explicitly, the terms are
\begin{align*}
  {\bf F}_{-1} &= S\\
  {\bf F}_0 &= (\D^{u+1} U \otimes S(-1,0)) \oplus (\D^{v+1} U \otimes S(0,-1))\\
  {\bf F}_1 &= \D^{u+v+2} U \otimes S(-1,-1).
\end{align*}
Hence $\ms{M}(u,v)$ is realized as a bi-graded Koszul module with $K = \D^{u+v+2} U$, $V_1 = \D^{u+1} U$, and $V_2= \D^{v+1} U$ and so
\[
  \Tor_i^R(\omega_B, \kk)_{u+1,v+1} = \ms{M}(u,v)_{a-u, b-v} = W_{a-1-u, b-1-v}(V,K). \qedhere
\]
\end{proof}

Using the dual of (\ref{eq:ses-Quv}), one can form the Weyman module $W^{(u+1,v+1)} = W(V,Q_{u+1,v+1}^{\vee})$ in two steps: we first use the subspace $K=\D^{u+v+2}U \subset Q_{u+1,v+1}^{\vee}$ and form the Koszul module $W(V,K)$ in part (2). Then there is a natural map 
 \begin{equation}\label{eq:map-Duv-WVK}
 \D^{u+v}U \oo S(-1,-1) \lra W(V,K)
 \end{equation}
 induced by the identification $\D^{u+v}U \simeq Q_{u+1,v+1}^{\vee} / K$, and the cokernel of this map is by definition $W(V,Q_{u+1,v+1}^{\vee}) = W^{(u+1,v+1)}$.

\subsection{Some complexes}

 Make the following definitions:
 \begin{align*}
   \ms{A}(u,v) &= \D^{u+v+2} U \oo S(-1,-1) & (u,v \ge -1)\\
   \ms{B}(u,v) &= \D^{u+1} U \oo \D^{v+1} U \oo S(-1,-1) & (u,v \ge -1)\\
   \ms{C}'(u,v) &= \D^u U \oo \D^v U \oo S(-1,-1) & (u,v \ge 0)\\
   \ms{C}(u,v) &= \D^{u-1} U \oo \D^{v-1} U \oo S(-1,-1) & (u,v \ge 1)\\
   \ms{D}_1(u,v) &= \D^{u+1} U \oo S(-1,0) & (u \ge -1)\\
   \ms{D}_2(u,v) &= \D^{v+1} U \oo S(0,-1) & (v \ge -1)\\
   \ms{D}(u,v) &= \ms{D}_1(u,v) \oplus \ms{D}_2(u,v) & (u,v \ge -1)\\
   \ms{N}(u,v) &= \D^{u+v} U \oo S(-1,-1) & (u,v \ge 0).
 \end{align*}
 
 The map \eqref{eq:map-Duv-WVK} is the middle homology of the following map between 3-term complexes:
 \begin{align} \label{eqn:complexes}
   \xymatrix{
     \ms{C}(u,v) \ar[r]^0 & \ms{S}(u,v) \\
   \ms{B}(u,v) \ar[r] \ar[u]^{(\iota_{u,v} \circ\iota_{u+1,v+1})^*} & \ms{D}(u,v) \ar[u] \\
     \ms{A}(u,v) \ar[r]^{\mathrm{id}} \ar[u]^{\mu_{u+1,v+1}^*} & \ms{A}(u,v) \ar[u]
     }
 \end{align}
 The right-hand side is just the complex computing $W(V,K)$ and the middle horizontal map comes from the inclusion
   \[
     \D^{u+1} U \oo \D^{v+1} U \to \bw^2(\D^{u+1} U \oplus \D^{v+1} U) \to (\D^{u+1} U \oplus \D^{v+1} U)^{\otimes 2}.
   \]

   Let $Z$ be one of the symbols $\ms{A}, \ms{B}, \ms{C}', \ms{C}, \ms{D}, \ms{S}$. We construct a double complex\footnote{Our differentials will only be correct up to a sign. Choosing a sign convention is a purely formal matter which we will ignore in favor of readability.} $\Phi(Z)$ of free $R$-modules with terms
   \[
     \Phi(Z)_{u,v} = Z(u,v)_{a-u,b-v} \otimes R.
   \]
   We will now describe the differentials, which on generators take the form
   \begin{align*}
     Z(u,v)_{a-u,b-v} &\to Z(u-1,v)_{a-u+1,b-v} \oo \Sym^a U\\
     Z(u,v)_{a-u,b-v} &\to Z(u,v-1)_{a-u,b-v+1} \oo \Sym^b U.
   \end{align*}
   We call the first map the ``$u$-component'' and the second map the ``$v$-component''.

   For the cases $Z \in \{\ms{A}, \ms{B}, \ms{C}', \ms{C}\}$, we can write $Z(u,v)$ as $G_Z(u,v) \otimes S(-1,-1)$. In each of these cases, we have two maps
   \begin{align*}
     G_Z(u,v) &\to G_Z(u-1,v) \otimes U\\
     G_Z(u,v) &\to G_Z(u,v-1) \otimes U
   \end{align*}
   via comultiplication. We will describe the differentials in terms of these maps for these cases.

   The $u$-component takes the form
   \[
     \xymatrix{
     G_Z(u,v) \oo \Sym^{a-u-1}(\D^{u+1} U) \oo \Sym^{b-v-1}(\D^{v+1} U) \ar[d]\\
     G_Z(u-1,v) \oo \Sym^{a-u}(\D^{u} U) \oo \Sym^{b-v-1}(\D^{v+1} U) \oo \Sym^a U
     }
   \]
   Applying Hermite reciprocity, this becomes
      \[
     \xymatrix{
       \displaystyle G_Z(u,v) \oo  \bw^{u+1}(\Sym^{a-1} U) \oo \bw^{v+1}(\Sym^{b-1} U) \ar[d] \\
     \displaystyle G_Z(u-1,v) \oo \bw^{u}(\Sym^{a-1} U ) \oo \bw^{v+1}(\Sym^{b-1} U) \oo \Sym^a U
     }
   \]
   To define this map, we use the comultiplication maps
   \begin{align*}
     G_Z(u,v) &\to G_Z(u-1,v) \otimes U\\
     \bw^{u+1}(\Sym^{a-1} U) &\to \bw^u(\Sym^{a-1} U) \oo \Sym^{a-1} U
   \end{align*}
   and then apply multiplication on the last two factors to get the factor $\Sym^a U$. The $v$-component is defined in a completely analogous way.

   \medskip

   Now consider $Z = \ms{D}_1$. The $u$-component takes the form
   \[
     \xymatrix{
       \D^{u+1} U \otimes \Sym^{a-u-1}(\D^{u+1} U) \otimes \Sym^{b-v}(\D^{v+1} U) \ar[d] \\
       \D^u U \otimes \Sym^{a-u}(\D^u U) \otimes \Sym^{b-v}(\D^{v+1} U) \otimes \Sym^a U
     }
   \]
   Applying Hermite reciprocity, this maps takes the form
   \[
     \xymatrix{
       \displaystyle \D^{u+1}U \oo  \bw^{u+1}(\Sym^{a-1} U) \oo \bw^{v+1}(\Sym^b U) \ar[d] \\
     \displaystyle \D^uU \oo \bw^{u}(\Sym^{a-1} U ) \oo \bw^{v+1}(\Sym^b U) \oo \Sym^a U
     }
   \]
   This is defined as before: we use the comultiplication maps
   \begin{align*}
     \D^{u+1} U &\to \D^u U \otimes U\\
     \bw^{u+1} (\Sym^{a-1} U) &\to \bw^u(\Sym^{a-1} U) \otimes \Sym^{a-1} U
   \end{align*}
   and multiply the last factors together. Under Hermite reciprocity, the $v$-component takes the form
   \[
          \xymatrix{
       \displaystyle \D^{u+1}U \oo  \bw^{u+1}(\Sym^{a-1} U) \oo \bw^{v+1}(\Sym^b U) \ar[d] \\
     \displaystyle \D^{u+1} U \oo \bw^{u+1}(\Sym^{a-1} U ) \oo \bw^{v}(\Sym^b U) \oo \Sym^b U
     }
   \]
   This is obtained by simply using the comultiplication map on the last exterior power factor. The definitions for $\ms{D}_2$ are completely analogous so we will omit the details.
   
   \medskip
   
   Now consider $Z=\ms{S}$. The $u$-component takes the form (the horizontal equalities are Hermite reciprocity)
   \[
     \xymatrix{
       \Sym^{a-u}(\D^{u+1} U) \oo \Sym^{b-v}(\D^{v+1} U) \ar@{=}[r] \ar[d] & \bw^{u+1}(\Sym^a U) \oo \bw^{v+1}(\Sym^b U) \ar[d]\\
       \Sym^{a-u+1}(\D^u U) \oo \Sym^{b-v}(\D^{v+1} U) \otimes \Sym^a U \ar@{=}[r] & \bw^u(\Sym^a U) \oo \bw^{v+1}(\Sym^b U) \oo \Sym^a U
       }
 \]
 The right vertical map is defined using exterior comultiplication.

  \subsection{Maps between the complexes}

 Applying $\Phi$ to \eqref{eqn:complexes}, we get a diagram
 \[
   \xymatrix{
     \Phi(\ms{C}) \ar[r]^0 & \Phi(\ms{S}) \\
   \Phi(\ms{B}) \ar[r] \ar[u]^{\Phi(\iota)} & \Phi(\ms{D}) \ar[u] \\
     \Phi(\ms{A}) \ar[r]^{\mathrm{id}} \ar[u]^{\Phi(\mu)} & \Phi(\ms{A}) \ar[u]
     }
   \]

   \begin{proposition} \label{prop:complex-diagram}
     All of the maps above are morphisms of double complexes.
   \end{proposition}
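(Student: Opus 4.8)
The plan is to check, arrow by arrow, that applying $\Phi$ to each of the seven maps in \eqref{eqn:complexes} produces a map commuting with both the $u$-component and the $v$-component of the differentials of the ambient double complexes. Since every map and every differential in sight is $R$-linear, it suffices to verify these identities on the $S$-module generators, that is, on the ``constant'' summands $Z(u,v)_{a-u,b-v}$, and via Hermite reciprocity we may write everything in terms of the factors $\D^{u+1}U$, $\D^{v+1}U$, $\bw^{u+1}(\Sym^{a-1}U)$, $\bw^{v+1}(\Sym^{b-1}U)$, exactly as in the description of the differentials. As in the construction of $\Phi$, signs are ignored. The zero map $\Phi(\ms C)\to\Phi(\ms S)$ and the identity $\Phi(\ms A)\to\Phi(\ms A)$ are trivially morphisms, and $\Phi(\ms A)\to\Phi(\ms D)$ is the composite of $\Phi(\mu)$ with $\Phi(\ms B)\to\Phi(\ms D)$, so it suffices to treat the four remaining arrows.

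First I would dispatch $\Phi(\mu)\colon\Phi(\ms A)\to\Phi(\ms B)$ and $\Phi(\iota)\colon\Phi(\ms B)\to\Phi(\ms C)$. In each of $\Phi(\ms A),\Phi(\ms B),\Phi(\ms C)$, the $u$-differential acts by divided-power comultiplication on the divided-power factor $G_Z(u,v)$ followed by the fixed ``comultiply-then-multiply'' recipe on $\bw^{u+1}(\Sym^{a-1}U)$, leaving $\bw^{v+1}(\Sym^{b-1}U)$ untouched. Now $\mu^{*}$ and $(\iota_{u,v}\circ\iota_{u+1,v+1})^{*}$ only alter the factor $G_Z(u,v)$, through maps built from divided-power comultiplication, and do not touch the exterior factors. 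Commutativity with the $u$-differential therefore reduces to the coassociativity of divided-power comultiplication recorded in Section~\ref{sec:prelim}: both composites realize one and the same iterated comultiplication on the divided-power factor followed by the same operation on $\bw^{u+1}(\Sym^{a-1}U)$. The $v$-differential is identical, so $\Phi(\mu)$ and $\Phi(\iota)$ are morphisms of double complexes.

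The heart of the matter is the two Koszul differentials $\Phi(\ms B)\to\Phi(\ms D)$ and $\Phi(\ms D)\to\Phi(\ms S)$. The crucial observation is that, under Hermite reciprocity, multiplication by an element of $V_1=\D^{u+1}U$ (resp.\ of $V_2=\D^{v+1}U$) on the symmetric-power factors of $S$ becomes the map $\nu$ of Section~\ref{sec:prelim}. For $\Phi(\ms B)\to\Phi(\ms D)$: after Hermite reciprocity its $\ms D_1$-component is $\nu$ applied to the pair $(\D^{v+1}U,\bw^{v+1}(\Sym^{b-1}U))$ and leaves the $u$-indexed factors $(\D^{u+1}U,\bw^{u+1}(\Sym^{a-1}U))$ alone; since the $u$-differentials of $\Phi(\ms B)$ and $\Phi(\ms D_1)$ act only on those $u$-indexed factors by one and the same recipe, commutativity with the $u$-differential is naturality on disjoint tensor factors, whereas commutativity with the $v$-differential is exactly the commuting square of Proposition~\ref{prop:comult-nu} (with $d=v+1$, $e=b-1$). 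Likewise, for $\Phi(\ms D)\to\Phi(\ms S)$ the component $\ms D_1\to\ms S$ becomes the map $\nu\colon\D^{u+1}U\oo\bw^{u+1}(\Sym^{a-1}U)\to\bw^{u+1}(\Sym^a U)$ on the $u$-indexed factors; since the $u$-differential of $\Phi(\ms S)$ is exterior comultiplication on $\bw^{u+1}(\Sym^a U)$ and that of $\Phi(\ms D_1)$ is the comultiply-then-multiply recipe on $(\D^{u+1}U,\bw^{u+1}(\Sym^{a-1}U))$, commutativity with the $u$-differential is again Proposition~\ref{prop:comult-nu}, and commutativity with the $v$-differential is naturality on disjoint factors. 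In both cases the $\ms D_2$-component follows from the $\ms D_1$-component by the symmetry exchanging $1\leftrightarrow 2$, $u\leftrightarrow v$, $a\leftrightarrow b$.

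I expect the only genuine difficulty to be organizational bookkeeping: tracking which tensor factors each elementary operation acts on and in which order, and matching the Hermite-reciprocity conventions so that every square becomes a literal instance of coassociativity of comultiplication (divided-power or exterior) or of Proposition~\ref{prop:comult-nu}. No new ingredient is needed beyond these two facts; in particular the same two facts also verify that each $\Phi(Z)$ is itself a double complex — the pieces involving only divided-power and exterior comultiplication are of Koszul type, and the interaction with $\nu$ in $\Phi(\ms D)$ and $\Phi(\ms S)$ is governed by Proposition~\ref{prop:comult-nu} — which one may record along the way.
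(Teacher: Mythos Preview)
Your proposal is correct and follows essentially the same approach as the paper: both reduce $\Phi(\mu)$ to coassociativity of divided-power comultiplication, both reduce $\Phi(\ms{B})\to\Phi(\ms{D})$ and $\Phi(\ms{D})\to\Phi(\ms{S})$ to Proposition~\ref{prop:comult-nu} in one direction and disjointness of tensor factors in the other, and both treat the $\ms{D}_2$-component by symmetry. The only minor differences are organizational: the paper handles $\Phi(\ms{A})\to\Phi(\ms{D})$ directly (as ``analogous'' to $\Phi(\ms{A})\to\Phi(\ms{B})$) rather than as a composite, and for $\Phi(\iota)$ it factors through $\Phi(\ms{C}')$ and writes down the explicit formula $x^{(i)}\otimes x^{(j)}\mapsto x^{(i-1)}\otimes x^{(j)}-x^{(i)}\otimes x^{(j-1)}$ for $\iota^*$ before invoking the commuting square, whereas you absorb this into a general coassociativity appeal---your phrase ``built from divided-power comultiplication'' is slightly loose (there is also a contraction against $\bw^2 U\cong\kk$), but the conclusion that $\iota^*$ commutes with the comultiplication differentials is correct for exactly the reason you give.
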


   \begin{proof}
     For the map $\Phi(\ms{A}) \to \Phi(\ms{B})$, compatibility in the $u$-direction amounts to the commutativity of the following diagram, which follows from coassociativity of the divided power comultiplication:
     \[
       \xymatrix{ \D^{u+v+2} U \ar[r] \ar[d] & \D^{u+1} U \oo \D^{v+1} U \ar[d] \\
         \D^{u+v+1} U \oo U \ar[r] & \D^u U \oo U \oo \D^{v+1} U } 
     \]
     Compatibility in the $v$-direction is analogous. The map $\Phi(\ms{A}) \to \Phi(\ms{D})$ is analogous.

Now consider $\Phi(\ms{B}) \to \Phi(\ms{C})$. We can factor it into two pieces: $\Phi(\ms{B}) \to \Phi(\ms{C}') \to \Phi(\ms{C})$. First, we have the formula
     \begin{align*}
       \iota_{u+1,v+1}^* \colon \D^{u+1} U \oo \D^{v+1} U &\to \D^{u} U \oo \D^{v} U\\
       x^{(i)} \oo x^{(j)} &\mapsto x^{(i-1)} \oo x^{(j)} - x^{(i)} \oo x^{(j-1)}.
     \end{align*}
     It follows from this explicit formula that the following diagram commutes, where the vertical maps are comultiplication:
     \[
       \xymatrix{
         \D^{u+1} U \oo \D^{v+1} U \ar[r]^{\iota_{u+1,v+1}^*} \ar[d] & \D^{u} U \oo \D^{v} U \ar[d] \\
         \D^u U \oo U \oo \D^{v+1} U \ar[r]^{\iota_{u,v}^*} & \D^{u-1}U \oo U \oo \D^{v} U
       }
     \]
     This implies compatibility of $\Phi(\ms{B}) \to \Phi(\ms{C}')$ in the $u$-direction, and the $v$-direction is similar. Also similarly, this can be used to prove compatibility of the map $\Phi(\ms{C}') \to \Phi(\ms{C})$. 

     Compatibility of $\Phi(\ms{D}) \to \Phi(\ms{S})$ reduces to Proposition~\ref{prop:comult-nu}.

     Finally, we prove compatibility of $\Phi(\ms{B}) \to \Phi(\ms{D})$. This map is a sum of two components, and the check is similar for both of them, so we will just explain the map $\Phi(\ms{B}) \to \Phi(\ms{D}_1)$. Compatibility in the $u$-direction is formal: the differential acts on different factors from the map $\Phi(\ms{B}) \to \Phi(\ms{D})$. Compatibility in the $v$-direction follows from Proposition~\ref{prop:comult-nu}.
   \end{proof}

\subsection{Homology of these complexes}

Consider the following data:
\[
 \begin{array}{|l||l|l|}
   \hline
   Z & E & F \\ \hline
   \ms{A} & (\Sym^{a-1} U)(-1) \oplus (\Sym^{b-1} U)(-1) & \mc{O}(a) \oplus \mc{O}(b) \\ \hline
   \ms{D}_1 & (\Sym^{a-1} U)(-1) \oplus \Sym^b U & \mc{O}(a) \\ \hline
   \ms{D}_2 & \Sym^a U \oplus (\Sym^{b-1}U)(-1) & \mc{O}(b) \\ \hline
 \end{array}
\]
In each case, we have a short exact sequence $0 \to E \to \Sym^a U \oplus \Sym^b U \to F \to 0$ of vector bundles over the projective variety $\bb{P}U$ such that
 \[
   \mathrm{tot}(\Phi(Z))_{i-1} = \bigoplus_{j \ge 0} \rH^j(\bb{P}U, \mc{O}(-2) \otimes \bw^{i+j} E) \otimes R.
 \]
Following \cite[\S 5]{weyman}, the terms on the right hand side have the structure of a minimal complex over $R$ by taking the derived pushforward of the Koszul complex on $E$. This describes the differentials that we have defined on the terms on the left hand side, so we conclude that the homology is
 \[
   \rH_{i-1}(\mathrm{tot}(\Phi(Z))) = \bigoplus_{d \ge 0} \rH^i(\bb{P}U, \mc{O}(-2) \otimes \Sym^d F).
 \]
Explicitly, we get
    \begin{align*}
     \rH_{-2}(\Phi(\ms{A})) &= \kk\\
     \rH_{-1}(\Phi(\ms{A})) &= \bigoplus_{d,e \ge 0} \Sym^{da+eb-2} U\\
     \rH_{-2}(\Phi(\ms{D}_1)) &= \kk\\
     \rH_{-1}(\Phi(\ms{D}_1)) &= \bigoplus_{d \ge 0} \Sym^{da-2} U\\
     \rH_{-2}(\Phi(\ms{D}_2)) &= \kk\\
     \rH_{-1}(\Phi(\ms{D}_2)) &= \bigoplus_{e \ge 0} \Sym^{eb-2} U.
   \end{align*}

   Next, $\Phi(\ms{N})$ and $\Phi(\ms{M})$ are respectively the homology of $\Phi(\ms{A}) \to \Phi(\ms{B}) \to \Phi(\ms{C})$ and $\Phi(\ms{A}) \to \Phi(\ms{D}) \to \Phi(\ms{S})$, and hence they inherit the structure of double complex. We now identify the corresponding total complexes.
   
   \begin{proposition}
     $\Phi(\ms{N})$ is the quotient complex of the minimal free resolution of the ideal $I$ of the rational normal scroll by the terms $\D^i U \oo \bw^{i+2}(\Sym^{a-1} U) \otimes R$ and $\D^i U \oo \bw^{i+2}(\Sym^{b-1} U) \otimes R$.
   \end{proposition}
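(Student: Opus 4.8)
The plan is to exhibit an explicit isomorphism of complexes of free $R$-modules between $\mathrm{tot}(\Phi(\ms{N}))$ and the stated quotient, first matching terms via Hermite reciprocity and then matching differentials.

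First I would recall from \eqref{eqn:scroll-tor}, shifted by one since $I$ is generated in degree two, that the minimal free resolution of $I$ is the Eagon--Northcott complex attached to the $2\times(a+b)$ matrix $M$ representing the multiplication $U\oo(\Sym^{a-1}U\oplus\Sym^{b-1}U)\to\Sym^aU\oplus\Sym^bU$: its $i$-th term is $\D^iU\oo\bw^{i+2}(\Sym^{a-1}U\oplus\Sym^{b-1}U)\oo R$, and on generators its differential comultiplies the factor $\bw^{i+2}(\cdots)\to\bw^{i+1}(\cdots)\oo(\Sym^{a-1}U\oplus\Sym^{b-1}U)$ and the factor $\D^iU\to\D^{i-1}U\oo U$, then applies $M$ and contracts to yield a linear form in $R$. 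The key point is that $M$ is block diagonal: it carries $\Sym^{a-1}U$ into $U^\vee\oo\Sym^aU$ and $\Sym^{b-1}U$ into $U^\vee\oo\Sym^bU$. Writing $\bw^{i+2}(\Sym^{a-1}U\oplus\Sym^{b-1}U)=\bigoplus_{p+q=i+2}\bw^p(\Sym^{a-1}U)\oo\bw^q(\Sym^{b-1}U)$ via \eqref{eq:wedge-sum}, this block structure shows the Eagon--Northcott differential never raises $p$ nor $q$. Therefore the family of summands with $q=0$, namely $\D^\bullet U\oo\bw^{\bullet+2}(\Sym^{a-1}U)\oo R$, and the family with $p=0$, namely $\D^\bullet U\oo\bw^{\bullet+2}(\Sym^{b-1}U)\oo R$, are subcomplexes (up to base change these are the Eagon--Northcott resolutions of the ideals of rational normal curves of degrees $a$ and $b$); they are disjoint direct summands in every term, so their direct sum is a subcomplex, with quotient $Q_\bullet$ whose degree-$i$ term is $\D^iU\oo\bigoplus_{p+q=i+2,\ p,q\ge 1}\bw^p(\Sym^{a-1}U)\oo\bw^q(\Sym^{b-1}U)\oo R$.

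Next I would match terms: setting $p=u+1$, $q=v+1$ and using Hermite reciprocity in the form $\bw^{u+1}(\Sym^{a-1}U)\simeq\Sym^{a-1-u}(\D^{u+1}U)$, $\bw^{v+1}(\Sym^{b-1}U)\simeq\Sym^{b-1-v}(\D^{v+1}U)$, each summand of $Q_i$ becomes $\D^{u+v}U\oo\Sym^{a-1-u}(\D^{u+1}U)\oo\Sym^{b-1-v}(\D^{v+1}U)\oo R=\ms{N}(u,v)_{a-u,b-v}\oo R$; summing over all $u,v\ge 0$ with $u+v=i$ identifies $Q_i$ with $\mathrm{tot}(\Phi(\ms{N}))_i$ as a free $R$-module.

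The main obstacle will be matching the differentials, for which it suffices to compare linear parts on minimal generators. On the $Q_\bullet$ side, the piece of the Eagon--Northcott differential that comultiplies off an $\Sym^{a-1}U$-column uses, because the relevant entries of $M$ are exactly the multiplication $U\oo\Sym^{a-1}U\to\Sym^aU$, the comultiplication $\D^{u+v}U\to\D^{u+v-1}U\oo U$, the exterior comultiplication $\bw^{u+1}(\Sym^{a-1}U)\to\bw^u(\Sym^{a-1}U)\oo\Sym^{a-1}U$, and then multiplication $U\oo\Sym^{a-1}U\to\Sym^aU\subseteq R_1$, landing in the $(u-1,v)$-summand; under Hermite reciprocity this is precisely the ``$u$-component'' of the differential of $\Phi(\ms{N})$ from Section~\ref{sec:Weyman}, and the $\Sym^{b-1}U$-column piece is the ``$v$-component''. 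The one delicate point is that $\Phi(\ms{N})$ was equipped with the differential induced from $\Phi(\ms{B})$, whose generator factor is $\D^{u+1}U\oo\D^{v+1}U$ rather than $\D^{u+v}U$; that the comultiplication on $\D^{u+1}U\oo\D^{v+1}U$ descends, along $\D^{u+v}U\simeq Q_{u+1,v+1}^\vee/K$, to the comultiplication on $\D^{u+v}U$ is a coassociativity computation in the spirit of the proof of Proposition~\ref{prop:complex-diagram}, while the compatibility of Hermite reciprocity with the exterior comultiplications and multiplications above is furnished by Proposition~\ref{prop:comult-nu}. Underlying all of this is that both the Eagon--Northcott resolution of $I$ and the complexes $\Phi(Z)$ are instances of Weyman's geometric technique (\cite{weyman}) over $\bb{P}U$ applied to the (split) syzygy bundle of $\mc{O}(a)\oplus\mc{O}(b)$: the two subcomplexes we remove are the contributions of the two summands, and $\Phi(\ms{N})$ is the ``mixed'' part, so the identification is forced once the dictionary is in place. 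Assembling the $u$- and $v$-components over all $(u,v)$ with $u+v=i$ then yields the claimed isomorphism of complexes.
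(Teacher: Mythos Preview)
Your proposal is correct and follows essentially the same route as the paper: both identify the terms of $\Phi(\ms{N})$ with the mixed summands of the Eagon--Northcott resolution via Hermite reciprocity, and both reduce the matching of differentials to the coassociativity of divided-power comultiplication. The only organizational difference is that the paper first realizes $\Phi(\ms{N})$ as a subquotient of $\Phi(\ms{B})$ via the short exact sequences $0\to\Phi(\ms{A})\to\Phi(\ms{B})\to\Phi(\ms{C}')\to 0$ and $0\to\Phi(\ms{N})\to\Phi(\ms{C}')\to\Phi(\ms{C})\to 0$, and then reads off the induced differential, whereas you start from the Eagon--Northcott side and match back; the content is the same. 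One small correction: your appeal to Proposition~\ref{prop:comult-nu} is unnecessary here, since the differentials on $\Phi(\ms{B}),\Phi(\ms{C}'),\Phi(\ms{C})$ (and hence on $\Phi(\ms{N})$) are \emph{defined} in the exterior-power picture after Hermite reciprocity, so no compatibility with $\nu$ is needed---that proposition only enters for the maps involving $\Phi(\ms{D})$ and $\Phi(\ms{S})$.
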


   \begin{proof}
          First, we have a short exact sequence of double complexes
     \[
       0 \to \Phi(\ms{A}) \to \Phi(\ms{B}) \to \Phi(\ms{C}') \to 0
     \]
     as shown in the proof of Proposition~\ref{prop:complex-diagram}. Next, we have a short exact sequence
     \[
       0 \to \Phi(\ms{N}) \to \Phi(\ms{C}') \to \Phi(\ms{C}) \to 0
     \]
     where the last map is a morphism of double complexes, and hence $\Phi(\ms{N})$ inherits a double complex structure from being the kernel of this map.

     By coassociativity of comultiplication, the following diagram commutes
         \[
       \xymatrix{
         \D^{u+v} U \ar[r]^-{\mu_{u,v}^*} \ar[d] & \D^{u} U \oo \D^{v} U \ar[d] \\
         \D^{u+v-1} U \oo U \ar[r]^-{\mu_{u-1,v}^*} & \D^{u-1}U \oo U \oo \D^{v} U
       }
     \]
     where in the bottom map, the $U$ factor is not being used in $\mu_{u-1,v}^*$.
     
This implies that for the $u$-component of $\Phi(\ms{N})$, we use the comultiplication maps
   \begin{align*}
     \D^{u+v} U &\to \D^{u+v-1} U \otimes U\\
     \bw^{u+1}(\Sym^{a-1} U) &\to \bw^u(\Sym^{a-1} U) \otimes \Sym^{a-1} U
   \end{align*}
   together with the multiplication $U\otimes \Sym^{a-1} U \to \Sym^a U$. The $v$-component is defined similarly. This agrees with the quotient complex of the  minimal free resolution of the ideal $I$ of the rational normal scroll.
 \end{proof}

   \begin{proposition}
     $\Phi(\ms{M})$ is the first linear strand of the minimal free resolution of $\omega_B$.
   \end{proposition}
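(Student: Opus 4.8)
The plan is to follow the template of the preceding proposition (the one for $\Phi(\ms{N})$): first match the terms of $\mathrm{tot}(\Phi(\ms{M}))$ with those of the first linear strand $L_\bullet$ of the minimal free resolution of $\omega_B$, then match the differentials. By construction $\Phi(\ms{M})$ is a double complex of free $R$-modules with
\[
 \Phi(\ms{M})_{u,v}=\ms{M}(u,v)_{a-u,b-v}\oo R=\Sym^{a+b-u-v-2}U\oo\Sym^{a-u}(\D^uU)\oo\Sym^{b-v}(\D^vU)\oo R.
\]
Applying Hermite reciprocity to the last display exactly as in the proof of Proposition~\ref{prop:omega-koszul}, and comparing with \eqref{eqn:canonical-tor}, identifies $\ms{M}(u,v)_{a-u,b-v}$ with $\Tor_{u+v}^R(\omega_B,\kk)_{u+1,v+1}$. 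Collapsing the bi-grading, the $i$-th term of $\mathrm{tot}(\Phi(\ms{M}))$ becomes $\Tor_i^R(\omega_B,\kk)_{i+2}\oo R(-i-2)$, that is, the $i$-th term of $L_\bullet$ (recall $\omega_B$ is generated in degree $2$). So it remains to identify the differentials.

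For this, recall that $\mathrm{tot}(\Phi(\ms{M}))$ is obtained from the complex of double complexes $\Phi(\ms{A})\to\Phi(\ms{D})\to\Phi(\ms{S})$ by passing to the middle homology. Under our characteristic hypotheses $\ms{A}(u,v)\to\ms{D}(u,v)$ is injective in every relevant bi-degree (this amounts, for $K=\D^{u+v+2}U$, to the absence of rank $\le 1$ tensors in $K^\perp$, a weak form of Proposition~\ref{prop:ker-Psi}), so $0\to\Phi(\ms{A})\to\Phi(\ms{D})\to\Phi(\ms{D})/\Phi(\ms{A})\to0$ is a short exact sequence of double complexes and $\Phi(\ms{M})=\ker\bigl(\Phi(\ms{D})/\Phi(\ms{A})\to\Phi(\ms{S})\bigr)$. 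Feeding into this the descriptions of the $\ms{D}$- and $\ms{S}$-differentials from Section~\ref{sec:K3carpets}, and simplifying by coassociativity of comultiplication together with Proposition~\ref{prop:comult-nu}, one obtains explicit formulas for the differential of $\mathrm{tot}(\Phi(\ms{M}))$ in terms of comultiplications, the multiplications $U\oo\Sym^{a-1}U\to\Sym^aU$ and $U\oo\Sym^{b-1}U\to\Sym^bU$, and the Hermite reciprocity isomorphisms. Since the minimal free resolution of the scroll $B$ is an Eagon--Northcott complex, that of its canonical module $\omega_B$ is the $R$-dual, and a direct comparison shows that its first linear strand has differentials of exactly this form; this yields $\mathrm{tot}(\Phi(\ms{M}))=L_\bullet$.

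One may also bypass the formulas. Every differential of $\mathrm{tot}(\Phi(\ms{M}))$ is assembled from positive-degree comultiplications and multiplications between distinct free summands, hence has entries in $\m$, so $\mathrm{tot}(\Phi(\ms{M}))$ is a minimal complex of free $R$-modules; it thus suffices to compute its homology. Inserting the homology of $\mathrm{tot}\Phi(\ms{A})$, $\mathrm{tot}\Phi(\ms{D}_1)$, $\mathrm{tot}\Phi(\ms{D}_2)$ and $\mathrm{tot}\Phi(\ms{S})$ computed in Section~\ref{sec:K3carpets} into the long exact sequences above (equivalently, the hypercohomology spectral sequence of $\Phi(\ms{A})\to\Phi(\ms{D})\to\Phi(\ms{S})$), one checks that the ``spurious'' summands $\bigoplus_d\Sym^{da-2}U$ and $\bigoplus_e\Sym^{eb-2}U$ in the homology of $\mathrm{tot}\Phi(\ms{A})$ cancel against the homology of $\mathrm{tot}\Phi(\ms{D})$, leaving $\rH_0(\mathrm{tot}\Phi(\ms{M}))=\omega_B$, $\rH_i(\mathrm{tot}\Phi(\ms{M}))=0$ for $0<i<a+b-2$, and $\rH_{a+b-2}(\mathrm{tot}\Phi(\ms{M}))$ free of rank one. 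Minimality together with the already-known ranks of the terms then forces $\mathrm{tot}(\Phi(\ms{M}))$ to be the first linear strand of the minimal free resolution of $\omega_B$.

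The main obstacle in either route is the final bookkeeping: in the first route, matching the two components of the differential (signs included) against the classical description of the dual of an Eagon--Northcott complex; in the second, controlling precisely the maps on homology induced by $\Phi(\ms{A})\to\Phi(\ms{D})\to\Phi(\ms{S})$ (for instance that $\rH(\mathrm{tot}\Phi(\ms{A}))\to\rH(\mathrm{tot}\Phi(\ms{D}))$ is the evident projection onto the $\ms{D}$-summands and the expected map on the residual copies of $\kk$). This is delicate but entirely parallel to the proof of the preceding proposition.
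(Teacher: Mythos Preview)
Your second route is essentially the paper's approach: compute the homology of $\mathrm{tot}\,\Phi(\ms{M})$ via the long exact sequences coming from $\Phi(\ms{A})\to\Phi(\ms{D})\to\Phi(\ms{S})$, observe minimality, and conclude. The paper organizes this slightly differently---it works with $\ker f$ (for $f\colon\Phi(\ms{D})\to\Phi(\ms{S})$) rather than your quotient $\Phi(\ms{D})/\Phi(\ms{A})$, and it augments to a four-term complex $0\to\Phi(\ms{A})\to\Phi(\ms{D})\to\Phi(\ms{S})\to R[-a,-b]\to 0$ whose only middle homology is $\Phi(\ms{M})$---but the substance is the same. The paper also leaves the intermediate vanishing $\rH_i=0$ for $0<i<a+b-2$ implicit in the same chase, concluding instead from the observation that $\rH_0(\Phi(\ms{M}))$ is generated by its lowest degree term.

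The one genuine flaw is your justification for the injectivity of $\Phi(\ms{A})\to\Phi(\ms{D})$. You invoke ``our characteristic hypotheses'' and a rank-$\le 1$-tensor criterion modeled on Proposition~\ref{prop:ker-Psi}, but the proposition being proved carries \emph{no} characteristic hypothesis, and it is not clear that absence of rank~$\le 1$ tensors in $K^\perp$ is equivalent to injectivity of $K\otimes S\to V\otimes S$ in any case. Injectivity in fact holds unconditionally: the proof of Proposition~\ref{prop:omega-koszul} realizes each map $\ms{A}(u,v)\to\ms{D}(u,v)$ as the top differential ${\bf F}_1\to{\bf F}_0$ of a complex obtained by derived pushforward of a Koszul complex on $\bb{P}U$, so ${\bf F}_\bullet$ has no homology above degree~$0$ and ${\bf F}_1\to{\bf F}_0$ is injective in every bi-degree; tensoring with $R$ yields what you need. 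The paper invokes exactly this (``from the proof of Proposition~\ref{prop:omega-koszul}'') to justify its four-term complex. With this correction your second route goes through in arbitrary characteristic.

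Your first route---matching differentials directly against the dual Eagon--Northcott complex---is not what the paper does, and as you acknowledge, the bookkeeping there is not actually carried out.
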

   
   \begin{proof}    
   First, the total complex of $\Phi(\ms{S})$ is a Koszul complex on $\Sym^a U \oplus \Sym^b U$ shifted by 2, so $\rH_{-2}(\Phi(\ms{S})) = \kk$ and all other homology vanishes.

   From the proof of Proposition~\ref{prop:omega-koszul}, we have a complex 
   \[
     0 \to \Phi(\ms{A}) \to \Phi(\ms{D}) \xrightarrow{f} \Phi(\ms{S}) \to R[-a,-b] \to 0
   \]
   whose middle homology is $\Phi(\ms{M})$. From the exact sequence
   \[
     0 \to \ker f \to \Phi(\ms{D}) \to \Phi(\ms{S}) \to R[-a,-b] \to 0
   \]
   and the calculations earlier, we conclude that
   \[
     \rH_{a+b-2}(\ker f) = R, \qquad \rH_{-2}(\ker f) = \kk, \qquad \rH_{-1}(\ker f) = \rH_{-1}(\Phi(\ms{D})).
   \]
   Next, from the short exact sequence $0 \to \Phi(\ms{A}) \to \ker f \to \Phi(\ms{M}) \to 0$, we get an exact sequence
   \[
     0 \to \rH_0(\Phi(\ms{M})) \to \rH_{-1}(\Phi(\ms{A})) \to \rH_{-1}(\ker f) \to \rH_{-1}(\Phi(\ms{M})) \to 0
   \]
   and $\rH_{a+b-2}(\Phi(\ms{M}))=R$. Since $\Phi(\ms{M})$ is concentrated in non-negative homological degrees, we conclude that
   \[
     \rH_0(\Phi(\ms{M})) = \bigoplus_{d,e \ge 1} \Sym^{da+eb-2} U.
   \]
   Next, $\Phi(\ms{M})_0 = \Sym^{a+b-2} U \otimes R$, so $\rH_0(\Phi(\ms{M}))$ is generated by its lowest degree term. We conclude that $\Phi(\ms{M})$ is the first linear strand of the minimal free resolution of $\omega_B$.
   \end{proof}
   
\begin{proof}[Proof of Theorem~\ref{thm:Ki2=Weyman}]
   Proposition~\ref{prop:complex-diagram} implies that we get a map of complexes
   \[
     F \colon \Phi(\ms{N}) \to \Phi(\ms{M}).
   \]
   On degree 0 components, this takes the form
   \[
     \Sym^{a-1} U \oo \Sym^{b-1} U \oo R \to \Sym^{a+b-2} U \oo R.
   \]
   This is the standard multiplication map, which follows from the explicit description of the map $\Phi(\ms{B}) \to \Phi(\ms{D})$. In particular, $F$ lifts the surjection $I \to \omega_B$, so that we can identify its maps with the induced maps on Tor.

To prove the last vanishing statement, we fix a bi-degree $(u+1,v+1)$, with $u+v=i$. We use Corollary~\ref{cor:vanishing-Weyman}, with $n_1=u+2$ and $n_2=v+2$. We have that $n_1+n_2-3 = u+v+1=i+1\leq\min(a,b)$, so the assumptions on the characteristic in the corollary are satisfied. We have moreover that $a-1-u\geq i-u=n_2-2$, and $b-1-v\geq i-v=n_1-2$, so $W^{(u+1,v+1)}_{a-1-u,b-1-v}=0$.
 \end{proof}

\section{Green's conjecture}\label{sec:green}

A canonical ribbon is a scheme which is a double structure on a rational normal curve. A hyperplane section of $\mc{X}(a,g-1-a)$ corresponds to a choice of polynomials $(f_1,f_2) \in \Sym^a U \oplus \Sym^{g-1-a} U$ and is a canonical ribbon if and only if $f_1,f_2$ is a regular sequence \cite[\S 2]{bayer-eisenbud}. These ribbons have Clifford index $a$ in the sense of \cite[\S 2]{bayer-eisenbud}. Since the homogeneous coordinate ring of $\mc{X}(a,g-1-a)$ is Cohen--Macaulay, it has the same graded Betti numbers as any canonical ribbon of Clifford index $a$ in $\PP^{g-1}$. Theorem~\ref{thm:Ki2=Weyman} then implies that the graded Betti numbers $\beta_{i,i+2}$ of canonical ribbons of Clifford index $a$ are 0 for $i<a$. 

\begin{proposition} \label{prop:smoothing}
Assume that the characteristic is not $2$. The canonical ribbons realized above can be smoothed out to a curve of gonality $a+2$ and Clifford index $a$. 
\end{proposition}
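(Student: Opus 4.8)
The plan is to obtain the smoothing --- together with the pencil that witnesses the gonality --- from the existing literature on rational ribbons, and then to pin down the Clifford index of the smoothed curve using the Betti vanishing proved earlier in the paper. First I would invoke \cite{fong} (see also \cite{bayer-eisenbud} and \cite{eisenbud-green}): a canonical ribbon $C_0\subset\bb{P}^{g-1}$ arising as a hyperplane section of $\mc{X}(a,g-1-a)$ as above is the special fiber of a flat family $\pi\colon\mc{C}\to T$ over a smooth pointed curve $(T,0)$ whose general fiber $C_t$ is a smooth, canonically embedded curve of genus $g$, and moreover the smoothing can be arranged so that $C_t$ carries a base-point-free $g^1_{a+2}$. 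This pencil is built into the ribbon-smoothing constructions of \cite{fong,bayer-eisenbud,eisenbud-green} and reflects the fact that $C_0$ has Clifford index $a$ in the sense of \cite[\S2]{bayer-eisenbud}; it is also exactly the place where the hypothesis $\chr(\kk)\neq 2$ enters, since rational ribbons need not deform in this way in characteristic $2$. In particular $\op{gon}(C_t)\le a+2$, and therefore $\op{Cliff}(C_t)\le a$.

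It then remains to prove the reverse inequality $\op{Cliff}(C_t)\ge a$, and this is where Theorem~\ref{thm:Ki2=Weyman} is used. Because $\mc{X}(a,g-1-a)$ is arithmetically Cohen--Macaulay, $C_0$ is arithmetically Gorenstein with the same Hilbert function as a smooth canonical curve, so $\pi$ has constant Hilbert polynomial and we may apply semicontinuity of the graded Betti numbers along $\pi$, giving $\beta_{i,i+2}(C_t)\le\beta_{i,i+2}(C_0)$ for all $i$ and all $t$ in a Zariski-open neighborhood of $0$. By Theorem~\ref{thm:Ki2=Weyman} (equivalently, by the paragraph preceding this proposition) $\beta_{i,i+2}(C_0)=0$ for $i<a$, hence $K_{i,2}(C_t,\om_{C_t})=0$ for $i<a$ and general $t$. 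On the other hand, the Green--Lazarsfeld nonvanishing theorem \cite[Appendix]{green} gives $K_{\op{Cliff}(C),2}(C,\om_C)\neq 0$ for every smooth curve $C$ of genus $g\ge 4$; applied to $C_t$ it forces $\op{Cliff}(C_t)\ge a$. Combining the two bounds yields $\op{Cliff}(C_t)=a$, and since $\op{Cliff}\le\op{gon}-2$ always holds we also obtain $\op{gon}(C_t)\ge a+2$, hence $\op{gon}(C_t)=a+2$. (For $g=3$ one has $a=1$ and the assertion reduces to the classical fact that a smoothing of a double conic is a non-hyperelliptic plane quartic.)

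The main obstacle is the second half of the first step: ensuring that the smoothing can be chosen so that the distinguished pencil on the ribbon actually deforms to a $g^1_{a+2}$ on the general fiber, rather than merely producing \emph{some} smoothing of $C_0$ with no control on its gonality. This is precisely the content of the ribbon-smoothing results in \cite{fong,bayer-eisenbud,eisenbud-green}, and it is where both the detailed geometry of rational ribbons and the restriction on the characteristic are essential; without the pencil, Step~2 only delivers $\op{Cliff}(C_t)\ge a$ rather than equality, and hence no control on the gonality. By contrast, the two ingredients feeding into Step~2 --- semicontinuity of the graded Betti numbers and the Green--Lazarsfeld nonvanishing theorem --- are standard, the former being especially transparent here since projective normality and the Hilbert function are constant along $\pi$.
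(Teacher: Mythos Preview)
Your proposal is correct and follows essentially the same line as the paper's proof. Both arguments (i) invoke Fong's smoothing construction (valid when $\chr(\kk)\ne 2$ because \cite[Theorem~1]{fong} goes through verbatim) to produce a flat family whose general fiber carries a $g^1_{a+2}$, yielding $\op{gon}(C_t)\le a+2$ and $\op{Cliff}(C_t)\le a$; and (ii) replace the characteristic-zero input \cite[Theorem~2.1]{eisenbud-green} by the Betti-number argument: semicontinuity plus Theorem~\ref{thm:Ki2=Weyman} gives $\beta_{i,i+2}(C_t)=0$ for $i<a$, and Green--Lazarsfeld nonvanishing then forces $\op{Cliff}(C_t)\ge a$. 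The only cosmetic differences are that the paper cites \cite[Corollary~9.7]{eisenbud-syzygies} rather than \cite[Appendix]{green} for the nonvanishing step (these are the same statement), and the paper is a bit more explicit about the Hilbert series (via Proposition~\ref{prop:A-hilbert}) and the reference \cite{bor-gre} underpinning semicontinuity.
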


\begin{proof}
  In characteristic zero, this follows from the proof of \cite[Theorem 2]{fong}. The two key inputs for the proof are:
  \begin{itemize}
  \item \cite[Theorem 1]{fong}, which identifies ribbon structures with lines in the normal space to the hyperelliptic locus in the versal deformation space at some fixed hyperelliptic curve, and
  \item \cite[Theorem 2.1]{eisenbud-green}, which states that if a family of smooth curves of Clifford index $e$ degenerates to a ribbon, then the resulting Clifford index is $\le e$.
  \end{itemize}
  The proof of the first result goes through verbatim if we assume that 2 is invertible in $\kk$. To replace the latter result, it suffices to prove the following: if $C$ is the generic fiber of a flat family of smooth curves degenerating to one of the canonical ribbons above, then the Clifford index of $C$ is $\ge a$. To see this, we note first that the Hilbert series for a canonical ribbon is the same as the Hilbert series of a canonical curve of genus $a+b+1$, namely
  \[
    \frac{1+(a+b-1)t + (a+b-1)t^2 + t^3}{(1-t)^2},
  \]
which follows by passing to a hyperplane section in Proposition~\ref{prop:A-hilbert} (and using that $A$ is Cohen--Macaulay). Using \cite[Proposition~2.15]{bor-gre}, it follows that the Betti numbers in our family are upper semicontinuous. From the discussion above, we know that for the canonical ribbon $\beta_{i,i+2}=0$ for $i<a$, so we must also have that $\beta_{i,i+2}(C)=0$ for $i<a$. Using \cite[Corollary 9.7]{eisenbud-syzygies}, this implies that the Clifford index of $C$ is $\ge a$.
\end{proof}

We are now ready to deduce the generic Green's conjecture in each gonality, as follows.

\begin{theorem} \label{thm:gonality}
  Pick integers $a \ge 1$ and $g \ge 2a+1$. If the characteristic of $\kk$ is either $0$ or $p \ge a$, then there is a non-empty Zariski open subset of curves of gonality $a+2$ and Clifford index $a$ which satisfy Green's conjecture, i.e., $\beta_{i,i+2}=0$ for $i<a$ under the canonical embedding.
\end{theorem}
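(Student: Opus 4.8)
The plan is to assemble results already in hand: the vanishing $\beta_{i,i+2}=0$ for $i<a$ for the canonical ribbons coming from K3 carpets, the smoothing of those ribbons in Proposition~\ref{prop:smoothing}, and upper-semicontinuity of graded Betti numbers across a flat family of canonically embedded curves of fixed genus.

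First I would set $b=g-1-a$, so that $g\ge 2a+1$ gives $b\ge a$ and hence $\min(a,b)=a$; the K3 carpet $\mc{X}(a,b)$ is then defined and has Clifford index $a$. A general hyperplane section of $\mc{X}(a,b)$ is cut out by a regular sequence $(f_1,f_2)\in\Sym^aU\oplus\Sym^bU$ and is a canonical ribbon $C_0$ of genus $g$ and Clifford index $a$; since the coordinate ring $A$ of $\mc{X}(a,b)$ is Cohen--Macaulay, $C_0$ has the same graded Betti numbers as $A$ reduced modulo a general linear form, and Theorem~\ref{thm:Ki2=Weyman} (applicable because $p=0$ or $p\ge\min(a,b)=a$) gives $\beta_{i,i+2}(C_0)=0$ for $i<a$. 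Next, I would invoke Proposition~\ref{prop:smoothing}: under the hypothesis $\chr(\kk)\ne 2$, which is forced once $a\ge 3$, while in the remaining cases $a\le 2$ the asserted vanishing is classical and characteristic-free (it amounts to projective normality of the canonical model and the known structure of curves of Clifford index $\le 1$), the ribbon $C_0$ is the closed fiber of a flat family whose general fiber $C$ is a smooth curve of gonality $a+2$ and Clifford index $a$. Every fiber of this family is a canonically embedded, arithmetically Cohen--Macaulay curve of genus $g$, hence all fibers have the Hilbert function of a canonical genus-$g$ curve; upper-semicontinuity of graded Betti numbers in such a family (exactly as used in the proof of Proposition~\ref{prop:smoothing}) propagates $\beta_{i,i+2}(C_0)=0$ for $i<a$ to $\beta_{i,i+2}(C)=0$ for $i<a$, i.e., $C$ satisfies Green's conjecture.

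To conclude, I would pass to the locally closed locus in the moduli space of genus-$g$ curves parametrizing curves of gonality $a+2$ and Clifford index $a$. The canonical model varies over this locus in a flat family with constant Hilbert function, so for each fixed $i<a$ the condition $\beta_{i,i+2}=0$ is Zariski open, and the intersection over the finitely many relevant $i$ remains Zariski open; the curve $C$ just produced lies in this intersection, so it is non-empty, which is the claim. The substance of the argument is contained entirely in the inputs, Theorem~\ref{thm:Ki2=Weyman} and Proposition~\ref{prop:smoothing}; the only things to verify in the assembly are that the smoothing family really consists of canonically embedded curves of a fixed genus (so that semicontinuity of Betti numbers is legitimately applicable) and that the restriction $\chr(\kk)\ne 2$ needed for the smoothing is compatible with $p\ge a$, which holds once $a\ge 3$. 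I expect this last bookkeeping --- the small-Clifford-index cases in characteristic two --- to be the only mild obstacle.
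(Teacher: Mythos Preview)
Your proposal is correct and follows essentially the same route as the paper. The only differences are cosmetic: the semicontinuity step you spell out is already contained in the proof of Proposition~\ref{prop:smoothing} (it is precisely how the Clifford index of the nearby smooth curve is bounded below), so the paper's proof simply cites that proposition directly; and for the residual characteristic-$2$ cases $a\le 2$ the paper names Noether's theorem ($\beta_{0,2}=0$ for non-hyperelliptic curves) and Petri's theorem ($\beta_{1,3}=0$ for non-trigonal curves) rather than appealing to ``classical'' results.
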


\begin{proof}
If $p=2$, then $a$ is 1 or 2. If $a \ge 1$, then $\beta_{0,2}=0$ for non-hyperelliptic curves by Noether's theorem. If $a=2$, then $\beta_{1,3}=0$ for non-trigonal curves by Petri's theorem. So for the remainder of the proof, we may assume that the characteristic is different from 2.
  
The condition $\beta_{i,i+2}=0$ for $i<a$ is open in the locus of curves of gonality $a+2$ in the moduli of curves of genus $g$. The condition that a curve of gonality $a+2$ has Clifford index $a$ is also open. Their intersection is non-empty by Proposition~\ref{prop:smoothing}.
\end{proof}

As a consequence, we solve \cite[Conjecture 0.1]{ES-K3carpets}:

\begin{corollary}
Let $\kk$ be a field of characteristic $p$ where either $p=0$ or $p \ge \lfloor \frac{g-1}{2} \rfloor$. Then a general curve of genus $g$ satisfies Green's conjecture, i.e., $\beta_{i,i+2}=0$ for $i<\lfloor (g-1)/2 \rfloor$.
\end{corollary}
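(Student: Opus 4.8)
The plan is to deduce the corollary from Theorem~\ref{thm:gonality} by setting the gonality parameter to its generic value. Put $a = \lfloor (g-1)/2 \rfloor$. If $g \le 2$ then $a = 0$ and the asserted vanishing $\beta_{i,i+2} = 0$ for $i < 0$ is empty, so I may assume $g \ge 3$ and hence $a \ge 1$. The first task is to check that the numerical and characteristic hypotheses of Theorem~\ref{thm:gonality} are met: writing $g = 2k$ or $g = 2k+1$ one computes $a = k-1$ or $a = k$, so that $2a+1$ equals $g-1$ or $g$ respectively, and in both cases $g \ge 2a+1$; moreover $a+2 = \lfloor (g+3)/2 \rfloor$ is exactly the gonality of a general curve of genus $g$, and $\lfloor (g-1)/2 \rfloor = a$ is exactly the Clifford index of a general curve of genus $g$. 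Finally the hypothesis here, $p = 0$ or $p \ge \lfloor (g-1)/2 \rfloor$, coincides with the hypothesis $p = 0$ or $p \ge a$ required by Theorem~\ref{thm:gonality}.

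With $a$ so chosen, Theorem~\ref{thm:gonality} provides a non-empty Zariski-open subset $\mathcal{U}$ of the locus of curves of gonality $a+2$ and Clifford index $a$ on which $\beta_{i,i+2} = 0$ for $i < a$. It remains to upgrade this to an open condition on the moduli space $M_g$. Since $a+1$ is strictly below the generic gonality, the locus $\{C : \op{gon}(C) \le a+1\}$ is a proper closed Brill--Noether subvariety of $M_g$, so its complement, the locus of curves of gonality $a+2$, is open and dense. Inside this open set, the condition $\op{Cliff}(C) \le a-1$ (the existence of a linear series computing a smaller Clifford index) is closed and fails generically, so $\op{Cliff}(C) = a$ is open and dense; and $\beta_{i,i+2}(C) = 0$ for $i < a$ is open on $M_g$ by semicontinuity of graded Betti numbers. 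The intersection of these three open subsets of $M_g$ contains $\mathcal{U}$ and is therefore a non-empty open subset of $M_g$ on which Green's conjecture holds, which is the claim.

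I do not anticipate a genuine obstacle here: all the substance --- the vanishing $\beta_{i,i+2} = 0$ for canonical ribbons coming from the K3-carpet computation of Theorem~\ref{thm:Ki2=Weyman}, and the smoothing of those ribbons to curves of the prescribed gonality and Clifford index --- is already contained in Theorem~\ref{thm:gonality}. The only care needed is the bookkeeping: confirming that $a = \lfloor (g-1)/2 \rfloor$, gonality $a+2$, and Clifford index $a$ are simultaneously the generic invariants of a genus-$g$ curve, and that the relevant strata and Betti-number conditions are open in $M_g$, so that ``general in the gonality stratum'' can be promoted to ``general in $M_g$''.
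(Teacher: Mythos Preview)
Your proposal is correct and follows essentially the same approach as the paper. The paper's own proof is a two-line argument: it invokes the standard fact (cited from \cite[Theorem 8.16]{eisenbud-syzygies}) that a general genus-$g$ curve has Clifford index exactly $\lfloor (g-1)/2\rfloor$ and then appeals to Theorem~\ref{thm:gonality}; your version supplies the surrounding bookkeeping (checking $g\ge 2a+1$, identifying the generic gonality and Clifford index, and spelling out the openness/density argument in $M_g$) that the paper leaves implicit.
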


\begin{proof}
  A curve of genus $g$ has Clifford index $\le \lfloor \frac{g-1}{2} \rfloor$ and for a general curve, this is the value of the Clifford index \cite[Theorem 8.16]{eisenbud-syzygies}. Hence the result follows from Theorem~\ref{thm:gonality}.
\end{proof}

\section*{Acknowledgments} 
The authors would like to thank Marian Aprodu, David Eisenbud, Gabi Farkas, Eric Riedl, Frank Schreyer, Claire Voisin, Jerzy Weyman, and Mengyuan Zhang for interesting discussions related to this project, for helpful suggestions, and for clarifications regarding the literature. We would also like to thank the anonymous referee for helping us improve the presentation of the paper. Experiments with the computer algebra software Macaulay2~\cite{M2} have provided many valuable insights.

\begin{bibdiv}
  \begin{biblist}

\bib{aprodu}{article}{
   author={Aprodu, Marian},
   title={Remarks on syzygies of $d$-gonal curves},
   journal={Math. Res. Lett.},
   volume={12},
   date={2005},
   number={2-3},
   pages={387--400},
   issn={1073-2780},
   review={\MR{2150892}},
   doi={10.4310/MRL.2005.v12.n3.a9},
}

\bib{AFPRW}{article}{
   author={Aprodu, Marian},
   author={Farkas, Gavril},
   author={Papadima, \c{S}tefan},
   author={Raicu, Claudiu},
   author={Weyman, Jerzy},
   title={Koszul modules and Green's conjecture},
   journal={Invent. Math.},
   volume={218},
   date={2019},
   number={3},
   pages={657--720},
   issn={0020-9910},
   review={\MR{4022070}},
   doi={10.1007/s00222-019-00894-1},
 }

 \bib{AFPRW2}{article}{
   author={Aprodu, Marian},
   author={Farkas, Gavril},
   author={Papadima, \c{S}tefan},
   author={Raicu, Claudiu},
   author={Weyman, Jerzy},
   title={Topological invariants of groups and Koszul modules},
   journal={Duke. Math. J., to appear},
   date={2021},
   note={\href{https://arxiv.org/abs/1806.01702v2}{arXiv:1806.01702v2}},
 }

\bib{bayer-eisenbud}{article}{
   author={Bayer, Dave},
   author={Eisenbud, David},
   title={Ribbons and their canonical embeddings},
   journal={Trans. Amer. Math. Soc.},
   volume={347},
   date={1995},
   number={3},
   pages={719--756},
   issn={0002-9947},
   review={\MR{1273472}},
   doi={10.2307/2154871},
 }

\bib{bopp-schreyer}{article}{
   author={Bopp, Christian},
   author={Schreyer, Frank-Olaf},
   title={A Version of Green's Conjecture in Positive Characteristic},
   journal={Exp. Math.},
   date={2019},
   doi={10.1080/10586458.2019.1576082},
}

\bib{bor-gre}{article}{
   author={Boraty\'{n}ski, M.},
   author={Greco, S.},
   title={Hilbert functions and Betti numbers in a flat family},
   journal={Ann. Mat. Pura Appl. (4)},
   volume={142},
   date={1985},
   pages={277--292 (1986)},
   issn={0003-4622},
   review={\MR{839041}},
   doi={10.1007/BF01766597},
}

 \bib{deopurkar}{article}{
   author={Deopurkar, Anand},
   title={The canonical syzygy conjecture for ribbons},
   journal={Math. Z.},
   volume={288},
   date={2018},
   number={3-4},
   pages={1157--1164},
   issn={0025-5874},
   review={\MR{3778993}},
   doi={10.1007/s00209-017-1930-z},
}

 \bib{eisenbud-book}{book}{
   author={Eisenbud, David},
   title={Commutative algebra},
   series={Graduate Texts in Mathematics},
   volume={150},
   note={With a view toward algebraic geometry},
   publisher={Springer-Verlag, New York},
   date={1995},
   pages={xvi+785},
   isbn={0-387-94268-8},
   isbn={0-387-94269-6},
   review={\MR{1322960}},
   doi={10.1007/978-1-4612-5350-1},
}

\bib{eisenbud-orientation}{article}{
   author={Eisenbud, David},
   title={Green's conjecture: an orientation for algebraists},
   conference={
      title={Free resolutions in commutative algebra and algebraic geometry},
      address={Sundance, UT},
      date={1990},
   },
   book={
      series={Res. Notes Math.},
      volume={2},
      publisher={Jones and Bartlett, Boston, MA},
   },
   date={1992},
   pages={51--78},
   review={\MR{1165318}},
}

\bib{eisenbud-syzygies}{book}{
   author={Eisenbud, David},
   title={The geometry of syzygies},
   series={Graduate Texts in Mathematics},
   volume={229},
   note={A second course in commutative algebra and algebraic geometry},
   publisher={Springer-Verlag, New York},
   date={2005},
   pages={xvi+243},
   isbn={0-387-22215-4},
   review={\MR{2103875}},
}

\bib{eisenbud-green}{article}{
   author={Eisenbud, David},
   author={Green, Mark},
   title={Clifford indices of ribbons},
   journal={Trans. Amer. Math. Soc.},
   volume={347},
   date={1995},
   number={3},
   pages={757--765},
   issn={0002-9947},
   review={\MR{1273474}},
   doi={10.2307/2154872},
}

\bib{ES-K3carpets}{article}{
   author={Eisenbud, David},
   author={Schreyer, Frank-Olaf},
   title={Equations and syzygies of $K3$ carpets and unions of scrolls},
   journal={Acta Math. Vietnam.},
   volume={44},
   date={2019},
   number={1},
   pages={3--29},
   issn={0251-4184},
   review={\MR{3935287}},
   doi={10.1007/s40306-018-00309-y},
}

\bib{green}{article}{
   author={Green, Mark L.},
   title={Koszul cohomology and the geometry of projective varieties},
   journal={J. Differential Geom.},
   volume={19},
   date={1984},
   number={1},
   pages={125--171},
   issn={0022-040X},
   review={\MR{739785}},
}

\bib{fong}{article}{
   author={Fong, Lung-Ying},
   title={Rational ribbons and deformation of hyperelliptic curves},
   journal={J. Algebraic Geom.},
   volume={2},
   date={1993},
   number={2},
   pages={295--307},
   issn={1056-3911},
   review={\MR{1203687}},
 }

\bib{gal-pur}{article}{
   author={Gallego, Francisco Javier},
   author={Purnaprajna, B. P.},
   title={Degenerations of $K3$ surfaces in projective space},
   journal={Trans. Amer. Math. Soc.},
   volume={349},
   date={1997},
   number={6},
   pages={2477--2492},
   issn={0002-9947},
   review={\MR{1401520}},
   doi={10.1090/S0002-9947-97-01816-3},
}

\bib{M2}{article}{
          author = {Grayson, Daniel R.},
          author = {Stillman, Michael E.},
          title = {Macaulay 2, a software system for research
                   in algebraic geometry},
          journal = {Available at \url{http://www.math.uiuc.edu/Macaulay2/}}
        }
 
\bib{HR}{article}{
   author={Hirschowitz, A.},
   author={Ramanan, S.},
   title={New evidence for Green's conjecture on syzygies of canonical
   curves},
   language={English, with English and French summaries},
   journal={Ann. Sci. \'{E}cole Norm. Sup. (4)},
   volume={31},
   date={1998},
   number={2},
   pages={145--152},
   issn={0012-9593},
   review={\MR{1603255}},
   doi={10.1016/S0012-9593(98)80013-X},
}

\bib{PS-chen}{article}{
   author={Papadima, Stefan},
   author={Suciu, Alexander I.},
   title={Chen Lie algebras},
   journal={Int. Math. Res. Not.},
   date={2004},
   number={21},
   pages={1057--1086},
   issn={1073-7928},
   review={\MR{2037049}},
   doi={10.1155/S1073792804132017},
}

\bib{PS}{article}{
   author={Papadima, Stefan},
   author={Suciu, Alexander I.},
   title={Vanishing resonance and representations of Lie algebras},
   journal={J. Reine Angew. Math.},
   volume={706},
   date={2015},
   pages={83--101},
   issn={0075-4102},
   review={\MR{3393364}},
   doi={10.1515/crelle-2013-0073},
 }
 
\bib{schreyer}{article}{
   author={Schreyer, Frank-Olaf},
   title={Syzygies of canonical curves and special linear series},
   journal={Math. Ann.},
   volume={275},
   date={1986},
   number={1},
   pages={105--137},
   issn={0025-5831},
   review={\MR{849058}},
   doi={10.1007/BF01458587},
}

\bib{schreyer-large-g}{article}{
   author={Schreyer, Frank-Olaf},
   title={Green's conjecture for general $p$-gonal curves of large genus},
   conference={
      title={Algebraic curves and projective geometry},
      address={Trento},
      date={1988},
   },
   book={
      series={Lecture Notes in Math.},
      volume={1389},
      publisher={Springer, Berlin},
   },
   date={1989},
   pages={254--260},
   review={\MR{1023403}},
   doi={10.1007/BFb0085937},
}

\bib{teixidor}{article}{
   author={Teixidor I Bigas, Montserrat},
   title={Green's conjecture for the generic $r$-gonal curve of genus $g\geq
   3r-7$},
   journal={Duke Math. J.},
   volume={111},
   date={2002},
   number={2},
   pages={195--222},
   issn={0012-7094},
   review={\MR{1882133}},
   doi={10.1215/S0012-7094-02-11121-1},
}

\bib{voisin-even}{article}{
   author={Voisin, Claire},
   title={Green's generic syzygy conjecture for curves of even genus lying
   on a $K3$ surface},
   journal={J. Eur. Math. Soc. (JEMS)},
   volume={4},
   date={2002},
   number={4},
   pages={363--404},
   issn={1435-9855},
   review={\MR{1941089}},
   doi={10.1007/s100970200042},
 }
 
\bib{voisin-odd}{article}{
   author={Voisin, Claire},
   title={Green's canonical syzygy conjecture for generic curves of odd
   genus},
   journal={Compos. Math.},
   volume={141},
   date={2005},
   number={5},
   pages={1163--1190},
   issn={0010-437X},
   review={\MR{2157134}},
   doi={10.1112/S0010437X05001387},
}

\bib{weyman}{book}{
   author={Weyman, Jerzy},
   title={Cohomology of vector bundles and syzygies},
   series={Cambridge Tracts in Mathematics},
   volume={149},
   publisher={Cambridge University Press},
   place={Cambridge},
   date={2003},
   pages={xiv+371},
   isbn={0-521-62197-6},
   review={\MR{1988690 (2004d:13020)}},
   doi={10.1017/CBO9780511546556},
}

  \end{biblist}
\end{bibdiv}

\end{document}